\let\OLDthebibliography\thebibliography
\renewcommand\thebibliography[1]{
\OLDthebibliography{#1}
\setlength{\parskip}{0pt}
\setlength{\itemsep}{0pt plus 0.3ex}
}
\newtheorem{thm}{Theorem}[section]
\newtheorem{lemma}[thm]{Lemma}
\newtheorem{prop}[thm]{Proposition}
\theoremstyle{definition}
\newtheorem{defn}[thm]{Definition}
\newtheorem{obs}[thm]{Observation}
\theoremstyle{remark}
\numberwithin{equation}{section}
\newcommand*\wrapletters[1]{\wr@pletters#1\@nil}
\def\wr@pletters#1#2\@nil{#1\allowbreak\if&#2&\else\wr@pletters#2\@nil\fi}
\def\d{{\,{\rm d}}}
\def \bQ {\mathbb Q}
\def \bR {\mathbb R}
\def \bZ {\mathbb Z}
\def \rank {\mathrm{rank}}
\def \dim {\mathrm{dim}}
\def \ord {\mathrm{ord}}
\def \det {\mathrm{det}}
\begin{document}
\title[Hasse Principle for singular intersections of quadrics]{The analytic Hasse Principle for certain singular intersections of quadrics in $\mathbb{P}^9$}
\author[Nuno Arala]{Nuno Arala}
\address{Mathematics Institute, Zeeman Building, University of Warwick, Coventry CV4 7AL, United Kingdom}
\email{Nuno.Arala-Santos@warwick.ac.uk}
\subjclass[2010]{Primary: 11G50, 11D72, 11P55.}
\thanks{}
\date{}
\begin{abstract} For a pair of quadratic forms with rational coefficients in at least $10$ variables, we prove an asymptotic formula for the number of common zeros under the assumption that the two forms determine a projective variety with exactly two (geometric) singular points defined over an imaginary quadratic field. This extends work of Browning and Munshi with the help of automorphic methods.
\end{abstract}
\maketitle

\tableofcontents

\section{Introduction}
\label{intro}
Let $X/\mathbb{Q}$ be a projective variety. To determine the set of rational points $X(\mathbb{Q})$, or even to establish whether $X(\mathbb{Q})\neq\emptyset$, is a long-standing challenge in Arithmetic Geometry. A necessary condition for $X(\mathbb{Q})\neq\emptyset$ is that $X(\mathbb{R})\neq\emptyset$ and $X(\mathbb{Q}_p)\neq\emptyset$ for every prime $p$, but the opposite implication does not hold in general. If this opposite implication holds for the variety $X$, then we say that $X$ satisfies the \emph{Hasse Principle}.

In recent decades it has become popular to approach the study of $X(\mathbb{Q})$ from a quantitative point of view, resorting for example to variants of the Hardy-Littlewood circle method. When successful, these variants often yield a stronger, quantitative version of the Hasse Principle in the form of an asymptotic formula for the number of points of bounded height in $X(\mathbb{Q})$ which is directly influenced by the sizes, in an appropriate sense, of the sets $X(\mathbb{R})$ and $X(\mathbb{Q}_p)$. We shall call this sort of statement a form of the \emph{Analytic Hasse Principle}. Although there is no fully consensual definition of the Analytic Hasse Principle in the literature, a tentative definition is as follows. Choose a model $\mathcal{X}/\mathbb{Z}$ of $X$, given as a subscheme of $\mathbb{P}^{n-1}_{\mathbb{Z}}$ by the vanishing locus of homogeneous polynomials $f_1,\ldots,f_s$ of degrees $d_1,\ldots,d_s$. Then a form of the Analytic Hasse Principle for $X$ is a statement of the form
\begin{equation}
\label{ahp}\sum_{\substack{\mathbf{x}\in\mathbb{Z}^n\\f_1(\mathbf{x})=\cdots=f_s(\mathbf{x})=0}}w\left(\frac{\mathbf{x}}{B}\right)=\mathfrak{S}\mathfrak{J}B^{n-d_1-\cdots-d_s}+O(B^{n-d_1-\cdots-d_s-\delta})
\end{equation}
for some piecewise smooth, compactly supported function $w$ defined in $\mathbb{R}^n$ and some $\delta>0$. Here $w$ is best thought of as a possibly smoothened version of the indicator function of a box in $\mathbb{R}^n$, in which case the left-hand side counts solutions to $f_1=\cdots=f_s=0$ on an expanding box. Moreover,
$$\mathfrak{J} =\int_{(\alpha_1,\ldots,\alpha_s)\in\mathbb{R}^s}\int_{\mathbf{x}\in\mathbb{R}^n}w(\mathbf{x})e(\alpha_1f_1(\mathbf{x})+\cdots+\alpha_sf_s(\mathbf{x}))\,d\mathbf{x}\,d\alpha_1\,\cdots\,d\alpha_s$$
and
$$\mathfrak{S}=\prod_{p}\mathfrak{S}_p\text{,}$$
where
$$\mathfrak{S}_p=\lim_{k\to\infty}\frac{1}{p^{k(n-s)}}\#\{\mathbf{b}\in(\mathbb{Z}/p^k\mathbb{Z})^n:f_j(\mathbf{b})\equiv0\,(\mathrm{mod}\,p^k)\}\text{.}$$
We will only consider versions of the Analytic Hasse Principle for which the weight $w$ is chosen in such a way that $\mathcal{J}>0$ (which should happen if the set of real solutions to $f_1=\cdots=f_s=0$ intersects the support of $w$ in a ``non-degenerate'' way). In the following table, we record some cases where the Analytic Hasse Principle has been established in the literature. For simplicity, we will only consider versions where $d_1=\cdots=d_s=d$. We remark that for non-singular $X$ a form of the Analytic Hasse Principle has been established by Birch in his seminal work \cite{Birch} under the assumption that $n\geq (d-1)2^{d-1}s(s+1)+s$, whereas Rydin Myerson \cite{Myerson} has improved this for generic $X$ to $n\geq d2^ds+s$; the following table records some celebrated improvements over these results for small values of $d$ and $s$.
\begin{table}[htbp]
\caption{Some established forms of the Analytic Hasse Principle}
\begin{tabular}{|c|c|c|c|}
\hline
$(d,s)$ & $n\geq?$ & Conditions & Author(s) \\
\hline
$(2,1)$ & $5$ & $X$ non-singular & Heath-Brown \cite{Heath-Brown}\\
$(3,1)$ & $16$ & $X$ satisfies a Hessian condition & Davenport \cite{Davenport}\\
$(3,1)$ & $14$ & $X$ satisfies a Hessian condition & Heath-Brown \cite{Heath-BrownII}\\
$(3,1)$ & $10$ & $X$ non-singular & Heath-Brown \cite{Heath-BrownIII}\\
$(3,1)$ & $9$ & $X$ non-singular & Hooley \cite{Hooley}\\
$(4,1)$ & $41$ & $X$ non-singular & Browning \& Heath-Brown \cite{BHB}\\
$(4,1)$ & $30$ & $X$ non-singular & Marmon \& Vishe \cite{MV}\\
$(2,2)$ & $9$ & $X$ contains two singular points defined over $\mathbb{Q}(i)$ & Browning \& Munshi \cite{BM}\\
$(2,2)$ & $11$ & $X$ non-singular & Munshi \cite{Munshi}\\
\hline
\end{tabular}
\end{table}

The purpose of the present work is to establish a form of the Analytic Hasse Principle for a new class of varieties, namely intersections $X$ of two quadrics in $\mathbb{P}^{n-1}_\mathbb{Q}$ that contain exactly two conjugate singular points which are defined over an imaginary quadratic field, as long as $n\geq10$. This generalizes the second-to-last row in the table above in all cases except when $n=9$, a case we have not been able to address.

The difficulty of this problem is highly influenced by the arithmetic of the quadratic field over which the singular points of $X$ are defined. Suppose $X$ is as above, with the singular points being defined over $K$ where $K$ is an imaginary quadratic field of discriminant $D<0$. Suppose for the moment that $4\mid D$, and let $k=-D/4$. Then, up to a linear automorphism of $\mathbb{P}^{n-1}$, we may suppose that the singular points of $X$ are precisely
$$P_1=[\underbrace{0:\cdots:0}_{n-2}:\sqrt{-k}:1]\text{ and }P_2=[\underbrace{0:\cdots:0}_{n-2}:-\sqrt{-k}:1]\text{.}$$
Any quadratic form in $\mathbb{Q}[x_1,\ldots,x_n]$ vanishing at $P_1$ (and hence at $P_2$) has, up to scaling, the shape
$$F(x_1,\ldots,x_{n-2})+x_{n-1}L_1(x_1,\ldots,x_{n-2})+x_nL_2(x_1,\ldots,x_{n-2})-x_{n-1}^2-kx_n^2$$
where $F\in\mathbb{Q}[x_1,\ldots,x_{n-2}]$ is a quadratic form and $L_1,L_2\in\mathbb{Q}[x_1,\ldots,x_{n-2}]$ are linear forms. Hence $X$ is the zero locus in $\mathbb{P}^{n-1}_\bQ$ of two polynomials of the form above. Given two such polynomials defining $X$, we may replace one of them by their difference, eliminating the term $x_{n-1}^2+kx_n^2$, and obtaining equations defining $X$ of the form
$$\begin{cases}&Q_1(x_1,\ldots,x_{n-2})+x_{n-1}L_1(x_1,\ldots,x_{n-2})+x_nL_2(x_1,\ldots,x_{n-2})-x_{n-1}^2-kx_n^2=0\\
&Q_2(x_1,\ldots,x_{n-2})+x_{n-1}L_3(x_1,\ldots,x_{n-2})+x_nL_4(x_1,\ldots,x_{n-2})=0\text{.}\\
\end{cases}
$$
We may perform a further variable change by replacing $x_{n-1}$ by $x_{n-1}+\frac{1}{2}L_1(x_1,\ldots,x_{n-2})$ and $x_n$ by $x_n+\frac{1}{2k}L_2(x_1,\ldots,x_n)$, which essentially allows us to assume that $L_1=L_2=0$:
$$\begin{cases}&Q_1(x_1,\ldots,x_{n-2})-x_{n-1}^2-kx_n^2=0\\
&Q_2(x_1,\ldots,x_{n-2})+x_{n-1}L_3(x_1,\ldots,x_{n-2})+x_nL_4(x_1,\ldots,x_{n-2})=0\text{.}\\
\end{cases}
$$
We now exploit the condition that $P_1$ is a singular point of $X$. In light of the equations above, this means that the matrix
$$\begin{pmatrix}&\nabla Q_1&2x_{n-1}&2kx_n\\&\nabla Q_2+x_{n-1}\nabla L_3+x_n\nabla L_4&L_3&L_4\\
\end{pmatrix}$$
has rank at most $2$ at $P_1$, where it evaluates to
$$\begin{pmatrix}&\mathbf{0}&2\sqrt{-k}&2k\\&\sqrt{-k}\nabla L_3+\nabla L_4&0&0\\
\end{pmatrix}\text{.}$$
It follows that $\sqrt{-k}\nabla L_3+\nabla L_4=0$, and since $L_3$ and $L_4$ are linear forms with rational coefficients it follows that $L_3=L_4=0$. We conclude (by appropriately rescaling $Q_1$ and $Q_2$) that $X$ has a model given as a subscheme of $\mathbb{P}^{n-1}$ by the equations
$$\begin{cases}&Q_1(x_1,\ldots,x_{n-2})-x_{n-1}^2-kx_n^2=0\\
&Q_2(x_1,\ldots,x_{n-2})=0\text{,}\\
\end{cases}
$$
for some quadratic forms $Q_1,Q_2\in\mathbb{Z}[x_1,\ldots,x_n]$. Similarly, if $D\equiv1,(\mathrm{mod}\,4)$, setting $k=(1-D)/4$ one can show along the same lines that $X$ has a model given as a subscheme of $\mathbb{P}^{n-1}$ by the equations
$$\begin{cases}&Q_1(x_1,\ldots,x_{n-2})-x_{n-1}^2-x_{n-1}x_n-kx_n^2=0\\
&Q_2(x_1,\ldots,x_{n-2})=0\text{,}\\
\end{cases}
$$
for some quadratic forms $Q_1,Q_2$. Note that the variety $V=V(Q_1,Q_2)\subseteq\mathbb{P}^{n-3}_\bQ$ is smooth, or otherwise $X$ would contain other singular points than $P_1$ and $P_2$. It is this choice of model that we will use to prove a version of the Analytic Hasse Principle for $X$. For simplicity let
\begin{equation}\label{binform}F(x,y)=\begin{cases}
x^2-\frac{D}{4}y^2&\text{ if }D\equiv0\,(\mathrm{mod}\,4)\\
x^2+xy+\frac{1-D}{4}y^2&\text{ if }D\equiv1\,(\mathrm{mod}\,4)\text{.}
\end{cases}
\end{equation}

The weights we will consider will be as follows. Consider any compactly supported function $w$ defined on $\mathbb{R}^{n-2}$ such that, for $\mathbf{y}$ in the support of $w$, one has
$$Q_1(\mathbf{y})\gg1\text{ and }|\nabla Q_2(\mathbf{y})|\gg1\text{.}$$
Then let $C$ be a constant (depending only on $Q_1$ and $D$) such that $|x|,|y|<C$ whenever $F(x,y)=Q_1(\mathbf{y})$ for some $\mathbf{y}$ in the support of $w$. Consider a compactly supported function $W$ defined on $\mathbb{R}^n$ satisfying
$$W(y_1,\ldots,y_n)=w(y_1,\ldots,y_{n-2})\text{ whenever }|y_{n-1}|,|y_n|<C\text{.}$$
One can then prove the following theorem.

\begin{thm}\label{main}
Let $W$ be as above. If $n\geq10$, then for some $\delta>0$ we have
$$\sum_{\mathbf{x}}W\left(\frac{\mathbf{x}}{B}\right)=\mathfrak{S}\mathfrak{J}B^{n-4}+O(B^{n-4-\delta})$$
where the sum on the left is over integer vectors $\mathbf{x}=(x_1,\ldots,x_n)$ satisfying
\begin{equation}
\label{gen}\begin{cases}&Q_1(x_1,\ldots,x_{n-2})-F(x_{n-1},x_n)=0\\
&Q_2(x_1,\ldots,x_{n-2})=0\text{,}\\
\end{cases}
\end{equation}
and $\mathfrak{S}$ and $\mathfrak{J}$ are the usual singular series and singular integral, respectively, as explained before.
\end{thm}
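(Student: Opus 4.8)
The plan is to exploit the fact that the binary form $F$ in \eqref{binform} is the principal norm form of the imaginary quadratic field $K=\mathbb{Q}(\sqrt D)$, so that the first equation of \eqref{gen} asserts that $Q_1(\bz)$, with $\bz=(x_1,\dots,x_{n-2})$, is a norm from $K$. Since the two singular points of $X$ preclude a direct application of the Birch circle method to the pair \eqref{gen}, I would instead apply the circle method only to the \emph{smooth} equation $Q_2(\bz)=0$ and carry the arithmetic of $F$ as an auxiliary weight. The support conditions on $w$ guarantee that $Q_1(\bz)\asymp B^2$ and that $|x_{n-1}|,|x_n|<CB$ for every solution of $F(x_{n-1},x_n)=Q_1(\bz)$ with $\bz/B\in\operatorname{supp}(w)$, so that $W(\bx/B)=w(\bz/B)$ throughout the relevant range and the left-hand side of Theorem~\ref{main} equals
\begin{equation}\label{redform}
N(B)=\sum_{\substack{\bz\in\bZ^{n-2}\\ Q_2(\bz)=0}}w(\bz/B)\,\rho_F\bigl(Q_1(\bz)\bigr),\qquad \rho_F(m)=\#\{(u,v)\in\bZ^2:F(u,v)=m\}.
\end{equation}

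Next I would decompose $\rho_F$ along the class group. Since $F$ represents exactly the norms of principal ideals of $\mathcal{O}_K$, orthogonality of the characters $\psi$ of $\operatorname{Cl}(K)$ gives $\rho_F(m)=(w_K/h_K)\sum_{\psi}r_\psi(m)$, where $w_K=\#\mathcal{O}_K^\times$, $h_K=\#\operatorname{Cl}(K)$, and $r_\psi(m)=\sum_{\mathrm{Nm}\,\mathfrak{a}=m}\psi(\mathfrak{a})$ is multiplicative. For $\psi=\psi_0$ one has $r_{\psi_0}(m)=\sum_{d\mid m}\chi_D(d)$ with $\chi_D=\bigl(\tfrac{D}{\cdot}\bigr)$, whereas for $\psi\ne\psi_0$ the generating function $\theta_\psi=\sum_m r_\psi(m)q^m$ is a holomorphic modular form of weight one that is cuspidal unless $\psi$ is a genus character; in all cases $\psi\ne\psi_0$ the coefficients $r_\psi(m)$ make no contribution to the main term. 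Thus $N(B)$ splits as a principal part (from $\psi_0$) plus lower-order cuspidal parts.

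For the principal part I must evaluate $\sum_{\bz:\,Q_2(\bz)=0}w(\bz/B)\sum_{d\mid Q_1(\bz)}\chi_D(d)$. Splitting the divisor sum at $B$ by the hyperbola method, the large divisors contribute $\chi_D\bigl(Q_1(\bz)\bigr)\sum_{e\mid Q_1(\bz),\,e\le B}\chi_D(e)$ up to bounded boundary terms, and since $\chi_D(Q_1(\bz))$ depends on $\bz$ only modulo $4|D|$ the whole problem reduces to counting, for each $d\le B$, the integer points $\bz$ in an expanding region with $Q_2(\bz)=0$ and $\bz$ in a prescribed union of residue classes to a modulus that is a bounded multiple of $d$. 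Since $n-2\ge 8$ and the quadratic forms involved ($Q_2$ and the pencil members $\alpha Q_2+\beta Q_1$) are of high rank because $V(Q_1,Q_2)$ is smooth, the circle method evaluates each such count with a power-saving error having explicit polynomial dependence on $d$; summing over $d\le B$ and completing the $d$-sum then produces, after the standard local density computation (in which $L(1,\chi_D)$ recombines with the local densities into $\mathfrak{S}$), the main term $\mathfrak{S}\mathfrak{J}B^{n-4}$.

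Finally, for each $\psi\ne\psi_0$ I must show $\sum_{\bz:\,Q_2(\bz)=0}w(\bz/B)\,r_\psi\bigl(Q_1(\bz)\bigr)=O(B^{n-4-\delta})$. Detecting $Q_2(\bz)=0$ by the circle method and opening the resulting residue-class sums, one is left to control bilinear expressions pairing $r_\psi(m)$ against quadratic exponential sums of the shape $\sum_{\bz}w(\bz/B)e\bigl(\alpha Q_2(\bz)+\beta Q_1(\bz)\bigr)$; equivalently one may run a two-dimensional circle method on the pair $\bigl(Q_1-F(x_{n-1},x_n),Q_2\bigr)$ and feed in information about $\theta_\psi$ in the ``$F$-direction''. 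The arithmetic factor is controlled by Voronoi summation for the weight-one form $\theta_\psi$, equivalently by a Wilton-type bound $\sum_{m\le X}r_\psi(m)e(m\gamma)\ll_\varepsilon X^{1/2+\varepsilon}$ uniform in $\gamma$, while the quadratic exponential sums enjoy square-root cancellation off the major arcs because $\alpha Q_2+\beta Q_1$ is of high rank; with $n-2\ge 8$ variables these should combine to give the required saving. This cuspidal estimate is where the automorphic input is essential---it is vacuous in the Browning--Munshi case $K=\bQ(i)$, for which $h_K=1$---and I expect it, together with the uniformity in $d$ needed in the principal part, to be the main obstacle, precisely because the number of variables is only $n=10$ in the critical case.
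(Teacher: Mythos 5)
The overall strategy you outline — reduce to a weighted count of $\mathbf{z}$ with $Q_2(\mathbf{z})=0$ weighted by $\rho_F(Q_1(\mathbf{z}))$, decompose $\rho_F$ along characters of the class group, and treat the principal (divisor-sum) piece and the cuspidal pieces separately — is the one the paper follows. The treatment of the cuspidal characters (order $\geq 3$) via Voronoi summation for the associated weight-one cusp forms inside a double $\delta$-method matches Theorem~\ref{notmain} in spirit, though the paper's version requires considerably more than a Wilton-type bound (it needs pointwise Weil-quality estimates for twisted exponential sums of the kind proved in \S\ref{expsums}).

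However, there is a genuine gap in your treatment of the \emph{genus characters}, i.e.\ the non-trivial $\psi\in\widehat{C_K}$ with $\psi^2=\psi_0$. You observe, correctly, that for these $\psi$ the theta series $\theta_\psi$ is an Eisenstein series rather than a cusp form, but then assert that ``in all cases $\psi\ne\psi_0$ the coefficients $r_\psi(m)$ make no contribution to the main term.'' That is false in general. For a genus character one has $r_\psi(m)=\chi_\psi(m)\,r_{\psi_0}(m)$ whenever $m$ has an ideal of norm $m$, where $\chi_\psi$ is a quadratic Dirichlet character modulo $|D|$ detecting the genus of $m$; since the values $Q_1(\mathbf{z})$ for $\mathbf{z}$ on $Q_2(\mathbf{z})=0$ are in general heavily biased in their residue class modulo $|D|$ (indeed, this bias is exactly what the singular series encodes), there is no cancellation, and $\sum_{\mathbf{z}:Q_2(\mathbf{z})=0}w(\mathbf{z}/B)\,r_\psi(Q_1(\mathbf{z}))$ is generically of order $B^{n-4}$. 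Concretely, if $Q_1(\mathbf{z})\equiv 1\pmod{|D|}$ on the locus $Q_2=0$, then $r_\psi(Q_1(\mathbf{z}))=r_{\psi_0}(Q_1(\mathbf{z}))$ identically and each genus character reproduces the full principal contribution.

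What the paper does instead is group the principal character together with all the genus characters and apply genus theory (Proposition~\ref{genus}): the total order-$\leq 2$ contribution is $2^{\mu-1}\sum_{d\mid m}\chi_D(d)$ when $m$ is admissible and $0$ otherwise, where $\mu=\omega(|D|)$. It is this admissibility restriction, not the unrestricted divisor convolution coming from $\psi_0$ alone, that produces the correct local factors $\mathfrak{S}_p$ at primes $p\mid D$ (see equation \eqref{sserex}), and the extra factor $2^{\mu-1}$ conspires with Dirichlet's class number formula and $L(1,\chi_D)$ to yield exactly $\mathfrak{S}\mathfrak{J}$. Your account, using only $\psi_0$, would miss both the factor $2^{\mu-1}$ and the admissibility restriction and hence produce the wrong constant whenever $|D|$ has more than one prime factor. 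To fix this along your lines you would either need to invoke genus theory as the paper does, or to run a separate Browning--Munshi-style hyperbola argument for each genus character using the twisted decomposition $r_\psi(m)=\sum_{de=m}\chi_1(d)\chi_2(e)$ and then recombine the resulting main terms; as written, the proposal simply discards terms that are of main-term size.
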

As one can see from the shape of the system above and the definition of $W$, obtaining the desired weighted count of solutions to \eqref{gen} amounts to counting solutions to $Q_2(x_1,\ldots,x_n)=0$ weighted by $w(\cdot/B)$ and by the number of representations of $Q_1(x_1,\ldots,x_{n-2})$ by the positive definite binary quadratic form $F$. In \cite{BM}, Browning and Munshi tackled the case when $D=-4$, so that $K=\mathbb{Q}(i)$ and $F(x,y)=x^2+y^2$, using in an essential way the identity
\begin{equation}
\label{conv}
\#\{(x,y)\in\mathbb{Z}^2:x^2+y^2=m\}=4\sum_{d\mid m}\chi_{-4}(d)\text{,}
\end{equation}
where $\chi_{-4}$ is the non-principal character modulo $4$.

However, a convolution formula in the style of \eqref{conv} is not available for representation numbers of general binary quadratic forms. This is where the arithmetic of $K$ brings in some difficulties. In fact, if the class group of $K$ is trivial (i.e., if $K$ is one of $\mathbb{Q}(\sqrt{-3})$, $\mathbb{Q}(\sqrt{-1})$, $\mathbb{Q}(\sqrt{-7})$, $\mathbb{Q}(\sqrt{-2})$, $\mathbb{Q}(\sqrt{-11})$, $\mathbb{Q}(\sqrt{-19})$, $\mathbb{Q}(\sqrt{-43})$, $\mathbb{Q}(\sqrt{-67})$, $\mathbb{Q}(\sqrt{-163})$), then such a convolution formula is available and it would be a straightforward matter to adapt Browning and Munshi's argument to that case. More generally, if the class group of $K$ has exponent $2$ (for example, if $D=-4k$ where $k$ is one of Euler's celebrated ``convenient numbers'') it is also possible to write a relatively similar expression for the number of representations of an integer by $F$ which should also allow the argument of Browning and Munshi to be adaptable. This, however, is known to happen for only finitely many discriminants $D$. Elements of order at least $3$ in the class group of $K$ pose significant difficulties.

In order to remedy the absence of a replacement of \eqref{conv}, we appeal to the theory of modular forms, inspired by the recent work of Blomer, Grimmelt, Li and Rydin Myerson \cite{BGLM}. Indeed, it turns out that we can write
$$N_F(m):=\#\{(x,y)\in\mathbb{Z}^2:F(x,y)=m\}=N_F^E(m)+N_F^C(m)$$
where $N_F^E(m)$ admits an expression in the style of \eqref{conv} and $N_F^C(m)$ can be written as a linear combination of Fourier coefficients of cusp forms to the group $\Gamma_0(-D)$. We may then decompose our desired count into two parts, one arising from $N_F^E$ and one arising from $N_F^C$, and treat the first one using Browning and Munshi's argument. We are then reduced to counting solutions to $Q_2(x_1,\ldots,x_{n-2})=0$ weighted by $\lambda(Q_1(x_1,\ldots,x_{n-2}))$, where
$$\sum_{m\geq1}\lambda(m)e(mz)$$
is a cusp form with respect to the group $\Gamma_0(-D)$. This is the main novel part of the present work, and can be summarized in the result below, of which Theorem \ref{main} will be a corollary. For simplicity, let $r=n-2$.

\begin{thm}
\label{notmain}
Let
$$\sum_{m\geq1}\lambda(m)e(mz)$$
be a cusp form of weight $1$ with respect to the group $\Gamma_0(-D)$, and let $w$ be as before. Then, if $r\geq8$, for some $\delta>0$ we have
$$\sum_{\substack{\mathbf{x}\in\mathbb{Z}^r\\Q_2(\mathbf{x})=0}}w\left(\frac{\mathbf{x}}{B}\right)\lambda(Q_1(\mathbf{x}))=O(B^{r-2-\delta})\text{.}$$
\end{thm}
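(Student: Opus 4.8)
The plan is to estimate the sum
$$S(B) = \sum_{\substack{\mathbf{x}\in\mathbb{Z}^r\\ Q_2(\mathbf{x})=0}} w\!\left(\frac{\mathbf{x}}{B}\right)\lambda(Q_1(\mathbf{x}))$$
by detecting the condition $Q_2(\mathbf{x})=0$ via the circle method — more precisely, via a Kloosterman-type delta method in the spirit of Duke--Friedlander--Iwaniec and Heath-Brown — and then exploiting cancellation coming from the Fourier coefficients $\lambda$ of the weight-one cusp form. First I would open up with the delta symbol, writing
$$S(B) = \sum_{q\le Q} \frac{1}{q}\sum_{\substack{a \bmod q\\ (a,q)=1}} \sum_{\mathbf{x}\in\mathbb{Z}^r} w\!\left(\frac{\mathbf{x}}{B}\right)\lambda(Q_1(\mathbf{x}))\, e\!\left(\frac{a Q_2(\mathbf{x})}{q}\right) h\!\left(\frac{q}{Q},\frac{Q_2(\mathbf{x})}{Q^2}\right),$$
with $Q \asymp B$ and $h$ the usual DFI weight. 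The inner sum over $\mathbf{x}$ splits, via completing the square in the exponential and Poisson summation in $\mathbf{x}$, into a main diagonal term (dual variable zero) and off-diagonal terms indexed by a dual vector $\mathbf{c}\in\mathbb{Z}^r$. Crucially, because we are summing $\lambda(Q_1(\mathbf{x}))$ rather than $1$, the "main" term is \emph{not} a volume times a singular series: it is itself a shifted-convolution-type sum $\sum_{\mathbf{x}} w(\mathbf{x}/B)\lambda(Q_1(\mathbf{x}))$ restricted to a lattice, and the whole point of Theorem \ref{notmain} (the absence of a main term on the right-hand side) reflects the fact that a cusp form has no constant term, so these contributions must \emph{also} exhibit cancellation.

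The next step is therefore to feed the cuspidality of $\sum_m \lambda(m)e(mz)$ into the analysis. Here I would follow the strategy of Blomer--Grimmelt--Li--Rydin Myerson \cite{BGLM}: after Poisson summation in the $\mathbf{x}$-variables the quantity $Q_1(\mathbf{x})$ modulo $q$ gets averaged, and one is led to sums of the form $\sum_{m} \lambda(m)\, r_{Q_1}(m; \text{congruence and size conditions})$, where $r_{Q_1}$ counts representations of $m$ by the quadratic form $Q_1$ in $r$ variables with $\mathbf{x}$ in a box. The representation function $r_{Q_1}(m)$ is, up to an Eisenstein (main) part, itself roughly the $m$-th coefficient of a theta series of weight $r/2$; so the sum over $m$ becomes a shifted convolution / Rankin--Selberg type sum between a weight-one cusp form and a weight-$r/2$ theta series. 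Using spectral bounds — subconvexity is \emph{not} needed, only Hecke-relation bounds plus the large sieve for the cuspidal spectrum, exactly as in \cite{BGLM} — one extracts power savings in $B$. The gain relative to the trivial bound $B^{r-2}$ (obtained by estimating the inner sum over $\mathbf{x}$ trivially as $B^r$, the sum over $a$ as $q$, and the sum over $q$ up to $Q\asymp B$, giving $B^{r-2}$ with no saving) is precisely what the hypothesis $r\ge 8$ should secure.

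The bookkeeping then proceeds as follows: (i) bound the diagonal/main contribution via the cuspidal cancellation described above, obtaining $O(B^{r-2-\delta_1})$; (ii) for the off-diagonal $\mathbf{c}\ne\mathbf{0}$ terms, the oscillatory integral from Poisson summation localizes $\mathbf{c}$ to a region of size $\asymp (q/B)\cdot\sqrt{\|\text{Hessian}\|}$, and one estimates the resulting exponential sums — these are (generalized) Gauss sums in the variable $a\bmod q$ twisted by the values of $\lambda$, to which one applies Weil/Deligne-type bounds for the Gauss-sum part and the divisor-bounded size of $\lambda(m)$; summing over $\mathbf{c}$ and $q$ yields $O(B^{r-2-\delta_2})$ once $r\ge 8$; (iii) collect $\delta=\min(\delta_1,\delta_2)>0$. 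One must also check that the smoothness and support conditions on $w$ (in particular $Q_1(\mathbf{y})\gg 1$ and $|\nabla Q_2(\mathbf{y})|\gg 1$ on $\operatorname{supp} w$) make all stationary-phase estimates legitimate: the former keeps $Q_1(\mathbf{x})\asymp B^2$ away from $0$, so $\lambda(Q_1(\mathbf{x}))$ is never evaluated at arguments where the theta-series approximation degenerates; the latter guarantees that the quadric $Q_2=0$ is smooth along $\operatorname{supp} w$, so the Poisson-dual oscillatory integrals satisfy clean decay.

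The main obstacle I anticipate is controlling the shifted-convolution sum between the weight-one cusp form and the weight-$r/2$ theta series uniformly in all the auxiliary parameters (the modulus $q$, the dual vector $\mathbf{c}$, and the congruence classes introduced by completing the square). Weight one is genuinely awkward — the associated automorphic representation is not tempered in the classical archimedean sense in a way that helps, and for $D$ with non-trivial class group the relevant cusp form is dihedral, attached to a Hecke character of $K$; one has to be careful that the Rankin--Selberg unfolding and the large-sieve input still give a saving strong enough to beat $B^{r-2}$ down to $r=8$ rather than needing, say, $r\ge 9$ or $r\ge 10$. I expect the bulk of the technical work, and the reason $r\ge 8$ (equivalently $n\ge 10$) appears rather than something smaller, to live precisely in making this spectral estimate efficient and uniform.
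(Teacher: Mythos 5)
Your proposal parts ways with the paper at the Poisson summation step, and the divergence hides a genuine gap. After you open the $\delta$-symbol for $Q_2(\mathbf{x})=0$ you want to apply Poisson summation in $\mathbf{x}$ with modulus $q$, but the factor $\lambda(Q_1(\mathbf{x}))$ is not periodic in $\mathbf{x}$ modulo $q$ (it depends on the actual integer value of $Q_1(\mathbf{x})$, not on a residue class), so Poisson cannot be executed in the form you describe; the proposed reinterpretation as $\sum_m\lambda(m)r_{Q_1}(m;\cdots)$ is something you would do \emph{before} Poisson, and in any case reintroduces a quadratic constraint $Q_1(\mathbf{x})=m$ that still needs handling. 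The paper resolves exactly this by running a \emph{nested double} $\delta$-method following Munshi: a first $\delta$-detection of $Q_1(\mathbf{x})=c$ with modulus $Q=B$, then a second $\delta$-detection of $Q_2(\mathbf{x})/q_1=0$ with modulus $Q=\sqrt{B}$. Only after both detections is Poisson applied, now with modulus $q_1q_2$. The cuspidality is then fed in not through a shifted-convolution / Rankin--Selberg / spectral large sieve estimate, but through Voronoï summation (Lemma \ref{voronoi}) in the outer variable $c$: the Bessel transform $\widetilde{h}_{k,m}$ decays rapidly unless $q_1\gg B^{1-\varepsilon}$ (Lemma \ref{negl}), and that restriction of the $q_1$-range is where the power saving originates. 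No automorphic spectral machinery beyond Voronoï is used.

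Your account of the arithmetic side is also too optimistic. Because Voronoï produces the extra term $\overline{a_1}m\overline{k}$ in the exponent, the sum over $a_1\bmod q_1$ in $\widetilde{C}_{q_1,q_2,k,m}(\mathbf{m})$ is a genuine Kloosterman sum, not a Ramanujan or Gauss sum, and it cannot be estimated by the route you sketch (complete the square, Weil for the Gauss-sum part, divisor bound on $\lambda$). The paper's treatment (Lemmas \ref{cp1}, \ref{cpc1}, \ref{gencq1}, \ref{mix}) instead compares these exponential sums to point counts on complete intersections over $\mathbb{F}_p$, invoking Hooley's consequence of Deligne (Proposition \ref{del}), Zak's theorem on hyperplane sections, and the dual variety $V_{k,m}^\ast$ with defining form $G$, in order to distinguish the frequencies $\mathbf{m}$ with $p\mid G(\mathbf{m},-1)$. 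It is this analysis, together with the even-$r$ Gauss-sum saving in Lemma \ref{goodc1q}, that pins down the threshold $r\geq 8$; the barrier is not the efficiency of a spectral estimate, which does not appear in the argument.
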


We remark that a form of the Analytic Hasse Principle for $X$ also implies that $X$ satisfies the Hasse Principle as classically stated. However, for the varieties considered in this article, this weaker statement has already been established by Colliot-Thélène, Sansuc and Swinnerton-Dyer in \cite{CTSSD} (see Theorem A.)

In section \S\ref{prelim} we collect the main algebraic and automorphic ingredients of our approach to Theorem \S\ref{main}. The proof of Theorem \S\ref{notmain} spans sections \S\ref{acti}, \S\ref{expsums}, \S\ref{expints} and \S\ref{endgame}. We then deduce Theorem \S\ref{main} in section \S\ref{realendgame} from Theorem \ref{notmain} and the ideas in \cite{BM}.

\section{Preliminaries}
\label{prelim}
Recall that here $D<0$ is the discriminant of an imaginary quadratic field $K$ and $F$ is given by \ref{binform}. Here and in the following, it will be convenient to assume that $D$ is square-free. This is automatic if $D\equiv1\,(\mathrm{mod}\,4)$; if $D\equiv0\,(\mathrm{mod}\,4)$ it is still ``almost true'' in the sense that the only prime which causes problems is $2$, which divides $D$ with valuation $2$ or $3$, and this does not cause significant trouble. To simplify the exposition we will henceforth assume that $D\equiv1\,(\mathrm{mod}\,4)$, since the minor modifications that have to be made to accommodate a possibly bigger than one $2$-adic valuation are reasonably straightforward. For any odd integer $N$ we will denote by $\chi_N$ the Dirichlet character modulo $N$ defined by the Jacobi symbol
$$\chi_N(m)=\left(\frac{N^\ast}{m}\right)\text{ where }N^\ast=(-1)^{\frac{N-1}{2}}N\equiv1\,(\mathrm{mod}\,4)\text{.}$$
Crucial to our approach will be an expression for the number of integer solutions to the equation $F(x,y)=m$, for a positive integer $m$, involving some convolutions of Dirichlet characters and some Fourier coefficients of cusp forms. To explain how this expression arises, we first recall some aspects of the classical arithmetic theory of binary quadratic forms, which can be found in \cite{Cox}.

Denote by $C_D$ the set of positive definite binary quadratic forms of discriminant $D$ modulo the natural action of $\mathrm{SL}_2(\mathbb{Z})$. Recall that the discriminant of a binary quadratic form $ax^2+bxy+cy^2$ is defined to be $b^2-4ac$. The upshot of the classical theory of binary quadratic forms, summarized in Theorem 7.7 in \cite{Cox} is that $C_D$ has a natural group structure, and in fact is isomorphic to the ideal class group $C_K$. More precisely, the isomorphism can be given explicitly as follows:

$$\begin{matrix}
C_D & \cong & C_K \\
\frac{N(\alpha x-\beta y)}{N(\mathfrak{a})} & \mapsfrom & {\mathfrak{a}=[\alpha,\beta]} \\
ax^2+bxy+cy^2 & \mapsto & {[a,\frac{1}{2}(-b+\sqrt{D})]}
\end{matrix}$$
(Note that on the right column when we write a fractional ideal we actually mean the class of that fractional ideal in $C_K$.) The binary form $F$ is a representative of the identity element in $C_D$. Moreover, given any binary quadratic form $f$ of discriminant $D$, if we denote by $C_f$ the ideal class in $C_K$ corresponding to $f$ under the above isomorphism, there is a bijection
\begin{equation}
\label{corr}\{(x,y)\in\mathbb{Z}^2:f(x,y)=m\}/\mathcal{O}_K^\times\leftrightarrow\{\mathfrak{a}\text{ integral ideal in $C_f:$ }N\mathfrak{a}=m\}\text{.}
\end{equation}
This comes for example from the proof of Theorem 7.7(iii) in \cite{Cox}. In particular, for the form $F$ this yields a $w_K$-to-one map between solutions to $F(x,y)=m$ and principal ideals in $\mathcal{O}_K$ with norm $m$, where $w_K=|\mathcal{O}_K^\times|$ is the number of units in $\mathcal{O}_K$.

By orthogonality of characters of the finite group $C_K$, it then follows that
\begin{align}
\label{repnum}
&\#\{(x,y)\in\mathbb{Z}^2:F(x,y)=m\}\nonumber\\&=w_K\sum_{\substack{\mathfrak{a}\lhd\mathcal{O}_K\\N\mathfrak{a}=m}}\frac{1}{h_K}\sum_{\chi\in\widehat{C_K}}\chi(\mathfrak{a})=\frac{w_K}{h_K}\sum_{\chi\in\widehat{C_K}}\sum_{\substack{\mathfrak{a}\unlhd\mathcal{O}_K\\N\mathfrak{a}=m}}\chi(\mathfrak{a})\text{.}
\end{align}

The summands corresponding to characters of order at most $2$ can be expressed in terms of convolutions of Dirichlet characters. For example, for the principal character, it can be shown that
\begin{equation}
\label{princ}\sum_{\substack{\mathfrak{a}\unlhd\mathcal{O}_K\\N\mathfrak{a}=m}}1=\#\{\mathfrak{a}\unlhd\mathcal{O}_K:N\mathfrak{a}=m\}=\sum_{d\mid m}\chi_D(d)\text{.}
\end{equation}
If $K$ has class number one, the principal character is the only one, and the previous equality has been crucially used in \cite{BM} to establish a form of Theorem \ref{main} for $K=\bQ(i)$.

The summands corresponding to characters of order at most $2$ can be well understood thanks to genus theory. On the set of binary quadratic forms with discriminant $D$, one may consider an equivalence relation, coarser than the one mentioned before, according to which two quadratic forms are equivalent if for any prime $p$ they lie in the same orbit under the natural action of $\mathrm{SL}_2(\mathbb{Z}_p)$. Equivalence classes under this relation are called \emph{genera}; it is a standard result of genus theory that a positive integer cannot be represented by quadratic forms in different genera. As it turns out, binary quadratic forms under this equivalence relation form a group, the so-called \emph{genus group}, which is naturally isomorphic to $C_K/C_K^2$. From here on, let $\mathcal{S}$ be the set of prime divisors of $D$; we shall set $\mu=\#\mathcal{S}$, in accordance with the notation used in \cite{Cox}. Moreover, we make the following definition.

\begin{defn}
A positive integer $m$ is \emph{admissible} if, for every prime $p$, the equation $F(x,y)=m$ has a $p$-adic solution $(x,y)\in\mathbb{Z}_p^2$.
\end{defn}
\begin{obs}
\label{obsad}
Suppose $p$ is a prime and $b$ is a positive integer. If $p^b\nmid m$, then whether the equation $F(x,y)=m$ has a solution in $(\mathbb{Z}_p)^2$ can be inferred from the congruence class of $m$ modulo $p^b$ alone. We shall also say that a \emph{nonzero residue class} $m\in\mathbb{Z}/p^b\mathbb{Z}$ is admissible if the equation $F(x,y)=m$ has a solution in $(\mathbb{Z}/p^m\mathbb{Z})^2$.
\end{obs}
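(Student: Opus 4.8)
The plan is to recast solvability of $F(x,y)=m$ over $\mathbb{Z}_p$ in terms of local norms, where the stated stability under congruences becomes essentially automatic. Recall that $F$ is the principal form of discriminant $D$; writing $\omega=(1+\sqrt{D})/2$, the set $\{1,\omega\}$ is an integral basis of $\mathcal{O}_K$ and $F(x,y)=N_{K/\mathbb{Q}}(x+y\omega)$. Since $\mathcal{O}_K\otimes_{\mathbb{Z}}\mathbb{Z}_p=\mathbb{Z}_p\oplus\mathbb{Z}_p\omega$, the equation $F(x,y)=m$ has a solution in $(\mathbb{Z}_p)^2$ if and only if $m$ lies in $\mathcal{N}_p:=N(\mathcal{O}_K\otimes_{\mathbb{Z}}\mathbb{Z}_p)$, the set of norms of $p$-integral elements of $K$. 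So the task reduces to showing that, whenever $p^b\nmid m$, membership of $m$ in $\mathcal{N}_p$ is a function of $m\bmod p^b$ alone; note that this residue determines both $c:=v_p(m)$ (because $c<b$) and the residue of $mp^{-c}$ modulo $p$.

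Next I would describe $\mathcal{N}_p$ explicitly according to how $p$ behaves in $K$, keeping in mind that $D$ is odd so that $p=2$ is unramified. If $p$ splits then $\mathcal{O}_K\otimes\mathbb{Z}_p\cong\mathbb{Z}_p\times\mathbb{Z}_p$ with norm $(a,b)\mapsto ab$, so $\mathcal{N}_p=\mathbb{Z}_p$ and there is nothing to prove. If $p$ is inert then $\mathcal{O}_K\otimes\mathbb{Z}_p$ is the ring of integers of the unramified quadratic extension of $\mathbb{Q}_p$; there the norm is surjective onto the units and $p$ is itself a uniformizer of norm $p^2$, so $\mathcal{N}_p=\{0\}\cup\{x\in\mathbb{Z}_p:v_p(x)\text{ even}\}$, and membership of $m$ depends only on the parity of $c$, which is visible in $m\bmod p^b$. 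If $p$ is ramified (so $p\mid D$ and $p$ is odd) let $\pi$ be a uniformizer of the ramified quadratic extension $\mathcal{O}_K\otimes\mathbb{Z}_p$; then $v_p(N\pi)=1$, and $N((\mathcal{O}_K\otimes\mathbb{Z}_p)^\times)$ is an index-$2$ subgroup of $\mathbb{Z}_p^\times$, hence equals $(\mathbb{Z}_p^\times)^2$, the unique such subgroup for odd $p$. Therefore
$$\mathcal{N}_p=\{0\}\cup\bigcup_{k\ge0}p^k\varepsilon_k(\mathbb{Z}_p^\times)^2,\qquad\varepsilon_k:=p^{-k}(N\pi)^k\in\mathbb{Z}_p^\times,$$
so membership of $m$ in $\mathcal{N}_p$ is governed by $c=v_p(m)$ together with whether $mp^{-c}\varepsilon_c^{-1}$ is a square in $\mathbb{Z}_p^\times$, equivalently by its Legendre symbol modulo $p$; both data are read off from $m\bmod p^{c+1}$, and $c+1\le b$. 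This establishes the first assertion.

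Finally, for the concluding remark I would note that each of the three sets $\mathcal{N}_p$ above has the property that if $x\in\mathcal{N}_p$ with $v_p(x)<b$ and $y\equiv x\pmod{p^b}$ then $y\in\mathcal{N}_p$ (vacuous if $p$ splits; from $v_p(y)=v_p(x)$ if $p$ is inert; from the fact that the square class of a $p$-adic unit is detected modulo $p$ if $p$ is ramified). Hence, for $m$ with $p^b\nmid m$, solvability of $F(x,y)=m$ in $(\mathbb{Z}_p)^2$ is equivalent to solvability in $(\mathbb{Z}/p^b\mathbb{Z})^2$: a $\mathbb{Z}_p$-solution reduces modulo $p^b$, and conversely a solution modulo $p^b$ produces $\alpha\in\mathcal{O}_K\otimes\mathbb{Z}_p$ with $N\alpha\equiv m\pmod{p^b}$ and $v_p(N\alpha)<b$, so $m\in\mathcal{N}_p$ by the stability just noted. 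This makes the new notion of admissibility for residue classes consistent with the original one for integers. I expect the only real point of care to be the ramified case $p\mid D$: one must pin down the local norm subgroup of units (index $2$, equal to the squares) and observe that, together with the valuation of $m$, it is already determined by $m$ one $p$-power beyond its $p$-part.
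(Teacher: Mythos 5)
Your argument is correct and complete. The paper states this as an unproved \emph{Observation}, so there is no proof to compare against; your identification of $F(x,y)=m$ being solvable over $\mathbb{Z}_p$ with $m$ lying in the image of the norm map on $\mathcal{O}_K\otimes\mathbb{Z}_p$, followed by the split/inert/ramified case analysis, is exactly the standard justification the author is implicitly relying on, and you handle the one delicate point (the ramified case, where the local unit norm group is $(\mathbb{Z}_p^\times)^2$ and square classes of units are read off modulo $p$ for odd $p$) correctly, using that $D\equiv1\pmod 4$ forces ramified primes to be odd. Your closing paragraph also correctly reconciles solvability over $\mathbb{Z}_p$ with solvability modulo $p^b$, which is needed for the paper's definition of an admissible residue class to be consistent with admissibility of integers (note the statement's ``$(\mathbb{Z}/p^{m}\mathbb{Z})^2$'' is evidently a typo for ``$(\mathbb{Z}/p^{b}\mathbb{Z})^2$'', and your argument supplies the missing implication from a solution modulo $p^b$ back to a $p$-adic solution).
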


\begin{prop}
\label{genus}
For any positive integer $m$, we have
\begin{align*}
&\sum_{\substack{\chi\in\widehat{C_K}\\\mathrm{ord}(\chi)\leq2}}\sum_{\substack{\mathfrak{a}\unlhd\mathcal{O}_K\\N\mathfrak{a}=m}}\chi(\mathfrak{a})\\
&=\begin{cases}2^{\mu-1}\sum_{d\mid m}\chi_D(d)&\text{ if $m$ is admissible}\\
0&\text{ otherwise.}\end{cases}
\end{align*}
\end{prop}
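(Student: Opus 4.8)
The plan is to use character-theoretic considerations together with genus theory to reduce the left-hand side to a sum of contributions indexed by the genus characters, and then identify the resulting expression explicitly.

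First I would recall that the characters $\chi\in\widehat{C_K}$ with $\mathrm{ord}(\chi)\le2$ are precisely the characters that factor through the genus group $C_K/C_K^2$; by the classical theory these are the so-called \emph{genus characters}, and there are exactly $2^{\mu-1}$ of them. Each genus character $\chi$ of the field $K$ of discriminant $D$ corresponds to a factorization $D=D_1D_2$ of the discriminant into two (coprime) fundamental discriminants, and for such a $\chi$ one has the classical factorization of the associated $L$-function $L(s,\chi)=L(s,\chi_{D_1})L(s,\chi_{D_2})$, or equivalently, at the level of ideal-counting functions,
$$\sum_{\substack{\mathfrak{a}\unlhd\mathcal{O}_K\\N\mathfrak{a}=m}}\chi(\mathfrak{a})=\sum_{\substack{d_1d_2=m\\d_1,d_2\ge1}}\chi_{D_1}(d_1)\chi_{D_2}(d_2)\text{.}$$
This is the key input, and I would cite the relevant statement from \cite{Cox} (the discussion surrounding genus theory and the decomposition of the genus characters) or from a standard reference on $L$-functions of quadratic fields. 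Summing this identity over the $2^{\mu-1}$ genus characters is then the main computation.

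Next I would split into the two cases of the claimed formula. When $m$ is \emph{not} admissible, the integer $m$ is represented by no form in the principal genus (indeed by no form at all in its putative genus class, since admissibility is exactly the local solvability condition, by Observation \ref{obsad} and the Hasse–Minkowski principle for the genus), and I would argue that each inner sum $\sum_{N\mathfrak{a}=m}\chi(\mathfrak{a})$ in fact vanishes; concretely, using the factorization above, for a non-admissible $m$ there is a prime $p$ obstructing local solvability, and a local computation at $p$ shows the Euler factor at $p$ of $\sum_d\chi_{D_1}(d_1)\chi_{D_2}(d_2)$ vanishes, so the whole sum is $0$. For admissible $m$, I would instead show that all $2^{\mu-1}$ inner sums $\sum_{N\mathfrak{a}=m}\chi(\mathfrak{a})$ coincide with $\sum_{d\mid m}\chi_D(d)$; this is a consequence of the fact that, once $m$ is admissible, the ideals of norm $m$ are distributed evenly across the genus — more precisely, the $2^{\mu-1}$ values obtained are all equal, because the obstruction to $\sum_{N\mathfrak a = m}\chi(\mathfrak a)$ differing from the principal-character count $\sum_{d\mid m}\chi_D(d)$ (cf.\ \eqref{princ}) is again purely local and vanishes under admissibility. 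Hence the full sum over the $2^{\mu-1}$ genus characters equals $2^{\mu-1}\sum_{d\mid m}\chi_D(d)$, as claimed.

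The main obstacle I anticipate is the bookkeeping at the prime $2$ and at primes dividing $D$: the factorization $D=D_1D_2$ into fundamental discriminants, and the precise form of the genus characters, is slightly delicate when $2\mid D$ (which here is assumed away by the reduction to $D\equiv1\,(\mathrm{mod}\,4)$), but even for odd $D$ one must be careful that the Euler factors at ramified primes behave correctly and that "admissible" is exactly the condition that makes all genus-character sums agree. I would handle this by a careful prime-by-prime analysis: for $p\nmid D$ the local factor of $\sum_{N\mathfrak a=m}\chi(\mathfrak a)$ depends only on the splitting behaviour of $p$ in $K$ and is visibly character-independent once one knows $p^{v_p(m)}\|m$; for $p\mid D$ the prime is ramified, the local factor is governed by whether $v_p(m)$ has the right parity together with a congruence condition, and this is precisely encoded by admissibility. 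Assembling these local factors and invoking the count $\#\{\mathfrak a : N\mathfrak a = m\}=\sum_{d\mid m}\chi_D(d)$ from \eqref{princ} for the comparison then yields the proposition.
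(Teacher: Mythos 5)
Your reduction to genus characters (the characters of order at most $2$) and your identification of the number $2^{\mu-1}$ of such characters are correct, and so is the genus-character factorization $\sum_{N\mathfrak{a}=m}\chi(\mathfrak{a})=\sum_{d_1d_2=m}\chi_{D_1}(d_1)\chi_{D_2}(d_2)$. But the central step in your treatment of the non-admissible case is false. You claim that for non-admissible $m$ each individual genus-character sum $\sum_{N\mathfrak{a}=m}\chi(\mathfrak{a})$ already vanishes, arguing that a local Euler factor must vanish. This is not true: non-admissibility does \emph{not} prevent $m$ from being the norm of an ideal of $\mathcal{O}_K$ — it only prevents those ideals from lying in the principal genus. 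For example, take $D=-15$, so $F(x,y)=x^2+xy+4y^2$, $\mu=2$, and both the class group and genus group are cyclic of order $2$. The integer $m=2$ is not admissible (mod $3$ the form $x^2+xy+4y^2$ only takes the values $0,1$), yet $2$ splits in $\bQ(\sqrt{-15})$, so there are two ideals of norm $2$, both lying in the non-principal class. For the non-trivial genus character $\chi_1$ (corresponding to $D=(-3)\cdot5$), one computes $\sum_{N\mathfrak{a}=2}\chi_1(\mathfrak{a})=\chi_{-3}(1)\chi_5(2)+\chi_{-3}(2)\chi_5(1)=(-1)+(-1)=-2\neq0$, while the principal character gives $+2$. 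It is only the \emph{total} sum $2+(-2)=0$ that vanishes.

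The correct mechanism, and the one the paper uses, is orthogonality of characters of the finite group $C_K/C_K^2$. For fixed $\mathfrak{a}$ of norm $m$, summing $\chi(\mathfrak{a})$ over all $2^{\mu-1}$ genus characters gives $2^{\mu-1}$ if $\mathfrak{a}$ lies in the principal genus and $0$ otherwise. Since all ideals of a given norm $m$ lie in the same genus, the whole left-hand side becomes $2^{\mu-1}\cdot\#\{\mathfrak{a}:N\mathfrak{a}=m\}$ when that common genus is principal (equivalently, when $m$ is admissible), and $0$ otherwise; then one invokes \eqref{princ}. Note that this approach sidesteps the genus-character factorization $L(s,\chi)=L(s,\chi_{D_1})L(s,\chi_{D_2})$ entirely — which is a deeper input than orthogonality — and in fact your route, even after the non-admissible case is repaired, is the longer one. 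Your treatment of the admissible case is also somewhat circuitous: the reason each genus-character sum equals $\sum_{d\mid m}\chi_D(d)$ there is simply that $\chi(\mathfrak{a})=1$ for every genus character $\chi$ and every $\mathfrak{a}$ in the principal genus; no prime-by-prime Euler-factor analysis is required.
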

\begin{proof}
The genus group mentioned above has order $2^{\mu-1}$ by Theorem 3.15 in \cite{Cox}. Since characters of $C_K$ of order at most $2$ correspond to characters of the genus group $C_K/C_K^2$, by orthogonality of characters it follows that the desired sum equals
$$2^{\mu-1}\#\{\mathfrak{a}\unlhd\mathcal{O}_K:N\mathfrak{a}=m,\mathfrak{a}\text{ lies in the principal genus.}\}\text{.}$$
However, as mentioned above, every ideal with norm $m$ belongs to the same genus, so the above equals $2^{\mu-1}$ times the number of ideals of norm $m$, given by \eqref{princ}, if $m$ is represented by a form in the principal genus, and $0$ otherwise. Now the admissibility condition means precisely that $m$ is represented by a form in the principal genus, establishing the result.
\end{proof}

The main automorphic ingredient of our argument is the fact that, when $\mathrm{ord}(\chi)\geq3$, the summand corresponding to $\chi$ is the Fourier coefficient at $m$ of a cusp form. More precisely, we have the following result.

\begin{prop}
\label{four}
Let $\chi\in\widehat{C_K}$ have order at least $3$. Then there exists a cusp form of weight $1$, character $\chi_D$ and level $-D$ with Fourier expansion
$$\sum_{m\geq1}\lambda(m)e(mz)\text{,}$$
such that, for any $m\geq1$,
$$\lambda(m)=\sum_{\substack{\mathfrak{a}\unlhd\mathcal{O}_K\\N\mathfrak{a}=m}}\chi(\mathfrak{a})\text{.}$$
\end{prop}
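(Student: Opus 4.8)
The plan is to recognize the Hecke $L$-function of the unramified Hecke character $\chi$ of $K$ (viewed as an idele class character trivial on the connected component and on $\mathcal{O}_K^\times$, corresponding to the class group character we also call $\chi$) and to invoke Hecke's classical theorem that such an $L$-function is the $L$-function of a weight-$1$ modular form. Concretely, one considers the theta series
$$\Theta_\chi(z)=\sum_{\mathfrak{a}\unlhd\mathcal{O}_K}\chi(\mathfrak{a})e(N\mathfrak{a}\cdot z)=\sum_{m\geq1}\left(\sum_{\substack{\mathfrak{a}\unlhd\mathcal{O}_K\\N\mathfrak{a}=m}}\chi(\mathfrak{a})\right)e(mz)\text{,}$$
so that by construction its $m$-th Fourier coefficient is exactly the quantity $\lambda(m)$ appearing in the statement. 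The task is then to check three things: that $\Theta_\chi$ is a modular form of weight $1$; that its level is $|D|=-D$ and its nebentypus is $\chi_D$; and that when $\mathrm{ord}(\chi)\geq 3$ it is in fact a \emph{cusp} form rather than merely a modular form.

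First I would establish modularity. This is the content of Hecke's construction (see, e.g., Hecke's original work, or Iwaniec's \emph{Topics in Classical Automorphic Forms}, Theorem 12.5, or Miyake): for an imaginary quadratic field $K$ of discriminant $D$ and a character $\chi$ of the ideal class group $C_K$, the series $\Theta_\chi$ is a holomorphic modular form of weight $1$ on $\Gamma_0(|D|)$ with nebentypus equal to the Kronecker symbol $\left(\tfrac{D}{\cdot}\right)$, which in our notation is precisely $\chi_D$. The weight is $1$ because $K$ is imaginary quadratic (rank-one unit group, so the relevant Grössencharacter has infinity-type giving weight $k=1$); the level and character come from the standard computation of the functional equation of the Hecke $L$-function $L(s,\chi)$, whose conductor is $|D|$, combined with the fact that $L(s,\chi)$ factors through the Dedekind zeta only in the genus-theoretic cases. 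I would cite this rather than reprove it.

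Next I would isolate the cuspidality claim, which is the one genuinely substantive point. A weight-$1$ form $\Theta_\chi$ fails to be cuspidal exactly when $L(s,\chi)$ — equivalently the Rankin–Selberg or the constant-term analysis at the cusps — picks up Eisenstein contributions, and this happens precisely when $\chi$ is a genus character, i.e. when $\chi^2=1$. Indeed, if $\mathrm{ord}(\chi)\leq 2$ then Proposition \ref{genus}'s proof shows $L(s,\chi)$ factors as $L(s,\psi_1)L(s,\psi_2)$ for Dirichlet characters $\psi_1,\psi_2$, so $\Theta_\chi$ is (a combination of) Eisenstein series; conversely, if $\mathrm{ord}(\chi)\geq 3$ then $\chi\neq\bar\chi$ and the Hecke character is not of the form $\psi\circ\mathrm{Nm}$, so $L(s,\chi)$ is entire with no pole and the associated form has vanishing constant term at every cusp — hence is a cusp form. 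I would make this precise by recalling that the constant term of $\Theta_\chi$ at a cusp is, up to normalization, a value of a Dirichlet $L$-function attached to the decomposition of $\chi$, which is nonzero only in the genus case; alternatively, one can argue via the fact that the archimedean component forces any Eisenstein summand to come from an induced representation $\mathrm{Ind}_K^{\mathbb{Q}}\chi$ that is reducible iff $\chi=\bar\chi$. The main obstacle is thus purely expository: cleanly citing the precise form of Hecke's theorem that simultaneously pins down weight, level $-D$, nebentypus $\chi_D$, and the cuspidality dichotomy at $\mathrm{ord}(\chi)=2$ versus $\mathrm{ord}(\chi)\geq 3$; once the right reference is in hand the proof is a two-line verification that the Fourier coefficients match by the very definition of $\Theta_\chi$.
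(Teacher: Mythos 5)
Your proposal has the same three-step skeleton as the paper's proof — define $\Theta_\chi$, establish modularity, then argue cuspidality — but both main steps are carried out by a genuinely different route. For modularity, the paper does not cite Hecke's theorem on Grössencharacter theta series directly; instead it uses the explicit $C_D\cong C_K$ correspondence (already set up in \S\ref{prelim}) to write $\Theta_\chi$ as a linear combination over ideal classes of theta functions $\sum_{(x,y)\in\mathbb{Z}^2}e(f(x,y)z)$ attached to binary quadratic forms of discriminant $D$, and then invokes Theorem~10.9 of \cite{Iwaniec} (modularity of theta series of positive definite integral quadratic forms) for each summand. Your citation of Iwaniec Theorem~12.5/Hecke is of course a legitimate alternative; the paper's version is simply more self-contained given the apparatus it has already built. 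For cuspidality, the paper's argument is a Rankin--Selberg computation: it notes that $\Theta_\chi$ is non-cuspidal iff its Rankin--Selberg $L$-function has a double pole at $s=1$, computes that Rankin--Selberg square (up to finitely many Euler factors) as $\zeta(s)L(s,\chi^2)$, and observes that the extra pole occurs precisely when $\chi^2$ is trivial. Your version reaches the same dichotomy, but one of the two arguments you sketch has a logical slip: ``$L(s,\chi)$ is entire with no pole, hence the form has vanishing constant term at every cusp'' does not follow — a product $L(s,\psi_1)L(s,\psi_2)$ of two nontrivial Dirichlet $L$-functions is also entire, yet its associated Eisenstein series is not cuspidal. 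What you actually need is that $L(s,\chi)$ does not \emph{factor} as such a product when $\mathrm{ord}(\chi)\geq3$, which is exactly what your alternative remark about irreducibility of $\mathrm{Ind}_K^{\mathbb{Q}}\chi$ (equivalently $\chi\neq\bar\chi$) supplies, and what the paper's Rankin--Selberg double-pole criterion detects. So your proposal is essentially correct once that sentence is repaired, and the irreducibility-of-induction argument is a clean substitute for the Rankin--Selberg one.
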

\begin{proof}
In other words, we want to show that
\begin{equation}
\label{wtv}
z\mapsto\sum_{\mathfrak{a}\unlhd\mathcal{O}_K}\chi(\mathfrak{a})e(N\mathfrak{a}\cdot z)
\end{equation}
defines a cusp form of weight $1$, character $\chi_D$ and level $-D$. That it defines a modular form with such weight, character and level follows from standard results on the modularity of theta functions. Indeed, for each ideal class $c$ in $C_K$, if one takes a quadratic form $f$ in the corresponding class in $C_D$, one has
$$\sum_{\mathfrak{a}\in c}e(N\mathfrak{a}\cdot z)=\frac{1}{w_K}\sum_{(x,y)\in\mathbb{Z}^2}e(f(x,y)z)$$
by \eqref{corr}, and the right hand side is a modular form of weight $1$, character $\chi_D$ and level $-D$ by Theorem 10.9 of \cite{Iwaniec}. The function in \eqref{wtv} is a linear combination of such forms, hence the modularity property.

It remains to show that \eqref{wtv} defines a cusp form as long as $\mathrm{ord}(\chi)\geq3$. One can see from the parametrization of Eisenstein series that a modular form is non-cuspidal if and only if the corresponding Dirichlet series factorizes into two Dirichlet $L$-functions, which in turn happens if and only if its Rankin-Selberg $L$-function has a double pole at $s=1$. Up to finitely many Euler factors, the Rankin-Selberg square of a character $\chi\in\widehat{C_K}$ is $\zeta(s)L(s,\chi^2)$, so this happens if and only if $L(s,\chi^2)$ has a pole at $s=1$. This happens precisely when $\chi^2$ is the trivial character, i.e., if $\chi$ has order at most $2$.
\end{proof}

In order to handle the sums involving Fourier coefficients of cusp forms that will arise due to Proposition \ref{four}, we will need the following Voronoï summation formula, for which we quote \cite{KMV}, Theorem A.4.

\begin{lemma}[Voronoï Summation]
\label{voronoi}
Let $\sum_{m\geq1}\lambda(m)e(mz)$ be a cusp form as in the conclusion of Proposition \ref{four}. Then there exist parameters $A_d$ and cusp forms $\sum_{m\geq1}\lambda_d(m)e(mz)$, parametrized by positive divisors $d$ of $D$, such that the following holds: for any positive integers $q$ and $a$ with $\gcd(a,q)=1$, if we set $D_1=\gcd(q,D)$ and $D_2=-D/D_1$, then for any compactly supported smooth function $G\in C^\infty(\bR^+)$ vanishing in a neighborhood of zero we have
\begin{align*}&\sum_{m\geq1}\lambda(m)e\left(\frac{ma}{q}\right)G(m)\\&=\frac{1}{q}\chi_{D_1}(a)\chi_{D_2}(-q)A_{D_2}\sum_{m\geq1}\lambda_{D_2}(m)e\left(\frac{-m\overline{aD_2}}{q}\right)\int_0^\infty G(x)J_0\left(\frac{4\pi}{q}\sqrt{\frac{mx}{D_2}}\right)dx\text{.}
\end{align*}
\end{lemma}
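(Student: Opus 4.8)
The plan is to recognise this as the weight-one case of the standard $\mathrm{GL}_2$ Voronoï summation formula and to obtain it from the functional equation of the additively twisted $L$-function of $f$, which in turn comes from Atkin--Lehner theory for $\Gamma_0(-D)$; this is exactly the content of \cite{KMV}, Appendix, so in the write-up one simply quotes Theorem A.4 there, but here is the derivation underlying it. Write $f(z)=\sum_{m\geq1}\lambda(m)e(mz)$ and $N=-D$. Since $\lambda(m)=\sum_{N\mathfrak a=m}\chi(\mathfrak a)$ we have $|\lambda(m)|\le d(m)$, so $L_{a/q}(s,f):=\sum_{m\geq1}\lambda(m)e(ma/q)m^{-s}$ converges absolutely for $\Re s>1$, and $\Gamma(s)(2\pi)^{-s}L_{a/q}(s,f)=\int_0^\infty f\!\left(\frac aq+iy\right)y^{s-1}\,dy$. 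Breaking this integral at the self-dual height and substituting the transformation law of $f$ under the scaling matrix attached to the cusp $a/q$ gives the entire continuation of $L_{a/q}(s,f)$ (entire because $f$ is cuspidal) together with a functional equation of the shape
$$\Lambda_{a/q}(s,f)=\varepsilon(a,q)\,\Lambda_{-\overline{aD_2}/q}(1-s,\widetilde f),$$
where $\widetilde f=\sum_{m\geq1}\lambda_{D_2}(m)e(mz)$ is the dual cusp form, $\varepsilon(a,q)$ collects the constants $q^{-1}\chi_{D_1}(a)\chi_{D_2}(-q)A_{D_2}$ together with the archimedean $\Gamma$-factors, and $\Lambda$ denotes the completed $L$-function.

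The structural input is the choice of scaling matrix. Given $\gcd(a,q)=1$, pick $b,d\in\bZ$ with $ad-bq=1$ and set $D_1=\gcd(q,N)$, $D_2=N/D_1$; these are coprime because $N$ is squarefree (under the running assumption $D\equiv1\,(\mathrm{mod}\,4)$), and in particular $\gcd(D_2,q)=1$, so $\overline{aD_2}$ makes sense modulo $q$. Composing $\begin{pmatrix}a&b\\q&d\end{pmatrix}$ with the partial Atkin--Lehner involution $W_{D_2}$ on $\Gamma_0(N)$ and reducing modulo $\Gamma_0(N)$ yields a matrix carrying $\frac aq+iy$ to a point of the form $-\frac{\overline{aD_2}}{q}+\frac{i}{q^2D_2y}$; evaluating the (real-valued) nebentypus $\chi_D=\chi_{D_1}\chi_{D_2}$ on the relevant entries produces the factor $\chi_{D_1}(a)\chi_{D_2}(-q)$, while $f|W_{D_2}=A_{D_2}\widetilde f$ defines simultaneously the pseudo-eigenvalue $A_{D_2}$ and the dual form $\widetilde f$, which is again a weight-one cusp form of level $N$ and character $\chi_D$ since $W_{D_2}$ preserves $S_1(\Gamma_0(N),\chi_D)$. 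Letting $d=D_2$ range over the positive divisors of $D$ produces the finite family of constants $A_d$ and cusp forms $\sum_m\lambda_d(m)e(mz)$ in the statement.

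To pass from the functional equation to the summation formula, apply Mellin inversion to $G$: as $G\in C_c^\infty(\bR^+)$ vanishes near $0$, its Mellin transform $\widetilde G(s)=\int_0^\infty G(x)x^{s-1}\,dx$ is entire and of rapid decay on vertical lines, so $\sum_{m\geq1}\lambda(m)e(ma/q)G(m)=\frac{1}{2\pi i}\int_{(\sigma)}\widetilde G(s)L_{a/q}(s,f)\,ds$ for $\sigma>1$. Shift the contour to $\Re s<0$ (legitimate since $L_{a/q}$ is entire of polynomial growth in vertical strips), insert the functional equation, re-expand $L_{-\overline{aD_2}/q}(1-s,\widetilde f)$ into its Dirichlet series, and interchange summation and integration (justified by absolute convergence, using $\lambda_{D_2}(m)\ll m^\varepsilon$ and the rapid decay of $\widetilde G$). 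What remains, for each $m$, is an inverse Mellin integral of $\widetilde G$ against a ratio of $\Gamma$-factors times a power of $4\pi^2 mx/(q^2D_2)$; the Mellin--Barnes representation $J_0(z)=\frac{1}{2\pi i}\int_{(\sigma)}\frac{\Gamma(s)}{\Gamma(1-s)}(z/2)^{-2s}\,ds$ (equivalently the Hankel-transform identity $\int_0^\infty G(x)J_0(2\sqrt{tx})\,dx=\frac{1}{2\pi i}\int_{(\sigma)}\widetilde G(s)\,\Gamma(s)\Gamma(1-s)^{-1}t^{-s}\,ds$) identifies this with $\int_0^\infty G(x)J_0\!\left(\frac{4\pi}{q}\sqrt{\tfrac{mx}{D_2}}\right)dx$, and collecting the constants gives exactly the claimed identity.

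The main obstacle is the precise bookkeeping of the constants and of the additive character on the dual side: pinning down the pseudo-eigenvalue $A_{D_2}$, checking that the transformation matrix contributes precisely $\chi_{D_1}(a)\chi_{D_2}(-q)$ (and not some other distribution of the two factors of $\chi_D$ between the $D_1$-part and the $D_2$-part), and verifying that the dual additive frequency is $-\overline{aD_2}/q$ with the inverse taken modulo $q$. This ramified-level bookkeeping is exactly what the appendix of \cite{KMV} carries out in general, so rather than reproduce it we quote Theorem A.4 there, noting only that our $f$ is a holomorphic cusp form of weight $1$, level $N=-D$ and nebentypus $\chi_D$ — so the general Bessel kernel $J_{k-1}$ specialises to $J_0$ — and that all archimedean factors are harmless because $G$ is smooth and compactly supported away from the origin.
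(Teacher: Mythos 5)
Your proposal is correct and takes essentially the same route as the paper: the paper gives no proof of this lemma at all, introducing it with the words ``for which we quote \cite{KMV}, Theorem A.4,'' which is precisely your conclusion. Your additional sketch of the underlying derivation (functional equation of the additively twisted $L$-function via the partial Atkin--Lehner involution $W_{D_2}$, Mellin inversion, and the Mellin--Barnes representation of $J_0$ in the weight-one case $k-1=0$) is accurate and consistent with the constants in the statement, but it goes beyond what the paper itself records.
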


Here $J_0$ is the Bessel function of order $0$, for a definition see for example Chapter 4 of \cite{IK}. The following simple proposition, which is a particular case of Lemma 4.17 in \cite{IK}, will be useful to deal with integrals such as the one that appears on the right hand side above, by relating them to Fourier transforms of radial functions on the plane.

\begin{prop}
\label{tri}
Let $g$ be a smooth compactly supported function defined on $\mathbb{R}^+$. For $\mathbf{x}\in\mathbb{R}^2$, define
$$f(\mathbf{x})=g(|\mathbf{x}|^2)$$
where $|\cdot|$ denotes the Euclidean norm. We then have
$$\widehat{f}(\mathbf{y})=h(|\mathbf{y}|^2)$$
where
$$h(y)=\pi\int_0^\infty J_0(2\pi\sqrt{xy})g(x)dx\text{.}$$
\end{prop}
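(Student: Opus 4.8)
The final statement is Proposition~\ref{tri}, a Hankel transform identity for radial functions on the plane. Here is how I would prove it.

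\medskip

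The plan is to compute the Fourier transform of a radial function directly, passing to polar coordinates and recognizing the Bessel function $J_0$ through its integral representation. First I would recall the normalization of the Fourier transform implicit in the Voronoï formula and Lemma~4.17 of \cite{IK}; here the relevant convention is $\widehat{f}(\mathbf{y})=\int_{\mathbb{R}^2}f(\mathbf{x})e(-\mathbf{x}\cdot\mathbf{y})\,d\mathbf{x}$, where $e(t)=e^{2\pi i t}$. Writing $\mathbf{x}=r(\cos\theta,\sin\theta)$ with $r\geq0$ and $\theta\in[0,2\pi)$, and $\mathbf{y}=\rho(\cos\phi,\sin\phi)$, we have $\mathbf{x}\cdot\mathbf{y}=r\rho\cos(\theta-\phi)$ and $d\mathbf{x}=r\,dr\,d\theta$, so
$$\widehat{f}(\mathbf{y})=\int_0^\infty\int_0^{2\pi}g(r^2)e(-r\rho\cos(\theta-\phi))\,r\,d\theta\,dr=\int_0^\infty g(r^2)\left(\int_0^{2\pi}e^{-2\pi i r\rho\cos(\theta-\phi)}\,d\theta\right)r\,dr.$$
The inner integral is independent of $\phi$ by periodicity, which already exhibits that $\widehat{f}$ is radial, depending only on $\rho=|\mathbf{y}|$.

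\medskip

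Next I would invoke the standard integral representation of the Bessel function of order zero,
$$J_0(u)=\frac{1}{2\pi}\int_0^{2\pi}e^{-iu\cos\psi}\,d\psi,$$
(see Chapter~4 of \cite{IK} or Chapter~4 of \cite{IK} for $J_0$; this is formula (B.28)-style). Applying it with $u=2\pi r\rho$ gives
$$\int_0^{2\pi}e^{-2\pi i r\rho\cos(\theta-\phi)}\,d\theta=2\pi J_0(2\pi r\rho).$$
Substituting back,
$$\widehat{f}(\mathbf{y})=2\pi\int_0^\infty g(r^2)J_0(2\pi r\rho)\,r\,dr.$$
Finally I would change variables $x=r^2$, so $dx=2r\,dr$, obtaining
$$\widehat{f}(\mathbf{y})=\pi\int_0^\infty g(x)J_0(2\pi\sqrt{x}\,\rho)\,dx=\pi\int_0^\infty g(x)J_0\!\left(2\pi\sqrt{x\rho^2}\right)dx,$$
which is exactly $h(|\mathbf{y}|^2)$ with $h(y)=\pi\int_0^\infty J_0(2\pi\sqrt{xy})g(x)\,dx$, as claimed. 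All interchanges of integration are justified because $g$ is smooth and compactly supported, so the integrand is bounded and supported on a compact set.

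\medskip

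I do not anticipate a genuine obstacle here: this is a classical computation and the cited Lemma~4.17 of \cite{IK} already contains it, so strictly speaking one could simply quote it. The only point requiring a little care is bookkeeping of constants and of the Fourier-transform normalization, so that the factor $\pi$ and the argument $2\pi\sqrt{xy}$ of $J_0$ come out matching the form in which Proposition~\ref{tri} will be applied (namely to the integral $\int_0^\infty G(x)J_0\big(\tfrac{4\pi}{q}\sqrt{mx/D_2}\big)dx$ appearing in Lemma~\ref{voronoi}). Getting those constants consistent with the Voronoï formula is the one thing worth double-checking, but it is routine.
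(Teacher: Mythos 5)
Your proof is correct. The paper itself does not provide a proof of this proposition; it simply cites Lemma~4.17 of \cite{IK}, of which this is the two-dimensional, order-zero special case. Your computation — passing to polar coordinates, isolating the angular integral as $2\pi J_0(2\pi r\rho)$ via the standard integral representation $J_0(u)=\frac{1}{2\pi}\int_0^{2\pi}e^{iu\cos\psi}\,d\psi$, and then substituting $x=r^2$ — is precisely the argument that establishes the cited lemma in this case, and the constants come out as required (the factor $\pi$ and the argument $2\pi\sqrt{xy}$). The justification for interchanging integrals is also sound given the compact support of $g$. In short, you have supplied the elementary verification that the paper delegates to a reference; both routes are of course compatible, yours being self-contained while the paper's is shorter.
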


\section{Activation of the $\delta$-method}
\label{acti}

Our proof of Theorem \ref{notmain} uses the form of the circle method developed by Heath-Brown in \cite{Heath-Brown} (with roots in \cite{DFI}), usually known as the ``$\delta$-method''. Our approach ought to be compared with the one used by \cite{Munshi}, although they differ quite significantly in technical aspects. Here we remind the reader of essential features of the method. The starting point is an expression for the $\delta$-symbol defined on the integers by $\delta(0)=1$ and $\delta(m)=0$ for $m\neq0$. This expression makes use of a smooth function $h(x,y)$ defined on $\bR_{>0}\times\bR$ in the following way. Let $\omega$ be a positive smooth function supported on $[1/2,1]$ with $\int_\bR\omega(x)dx=1$. We then define
$$h(x,y)=\sum_{j=1}^\infty\frac{1}{xj}\left(\omega(xj)-\omega\left(\frac{|y|}{xj}\right)\right)\text{.}$$
The function $h$ has the following key properties, the proof of which can be found in Section 4 of \cite{Heath-Brown}.
\begin{lemma}[Properties of $h(x,y)$]
\label{h}
We have the following:
\begin{enumerate}[label=(\roman*)]
\item $h(x,y)=0$ whenever $x\geq\max\{1,2|y|\}$;
\item For $x\leq1$ and $|y|\leq x/2$,
$$\frac{\partial^i}{\partial x^i}h(x,y)\ll_ ix^{-i-1}\text{ and }\frac{\partial}{\partial y}h(x,y)=0\text{;}$$
\item For $|y|>x/2$,
$$\frac{\partial}{\partial x^i}\frac{\partial}{\partial y^j}h(x,y)\ll_{i,j}x^{-1-i}{|y|^{-j}}\text{;}$$
\item For any $(x,y)$ and any non-negative integers $i$, $j$ and $N$,
$$\frac{\partial}{\partial x^i}\frac{\partial}{\partial y^j}h(x,y)\ll_{i,j,N}x^{-1-i-j}\left(x^N+\min\left\{1,\left(\frac{x}{|y|}\right)^N\right\}\right)\text{.}$$
Moreover the $x^N$ summand on the right can be omitted if $j>0$.
\end{enumerate}
\end{lemma}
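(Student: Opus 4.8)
The plan is to obtain all four assertions by careful but essentially elementary estimation of the defining series, after the splitting $h(x,y)=h_1(x)+h_2(x,y)$, where
\[
h_1(x)=\sum_{n\ge1}\frac{\omega(xn)}{xn},\qquad
h_2(x,y)=-\sum_{n\ge1}\frac{1}{xn}\,\omega\!\Bigl(\frac{|y|}{xn}\Bigr)
\]
(I rename the summation index to $n$ to reserve $j$ for the derivative order). Three facts drive everything: $h_1$ is independent of $y$; a summand of $h_1$ vanishes unless $n\le 1/x$; and a summand of $h_2$ vanishes unless $|y|\le xn\le 2|y|$, in particular unless $n\asymp |y|/x$. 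Since $\omega$ vanishes to infinite order at the endpoints of $[1/2,1]$, part (i) is then immediate: if $x\ge\max\{1,2|y|\}$ then no positive integer $n$ satisfies either constraint, so every term of the series is zero.

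For part (ii): if $|y|\le x/2$ then $xn\ge x\ge 2|y|$ for every $n\ge1$, so the $h_2$-terms and all their $y$-derivatives are absent; hence $h=h_1(x)$, so $\partial_yh=0$, and the $x$-derivative bound follows from the crude count that differentiating $i$ times in $x$ introduces a factor $n^i\asymp x^{-i}$ while $h_1$ has only $O(x^{-1})$ nonzero terms, each of size $O(1)$, giving $\partial_x^ih\ll_i x^{-i-1}$. Part (iii) is obtained the same crude way, now directly from $h_2$: on the support $xn\asymp|y|$, so $\partial_x^i\partial_y^j$ of a single summand is $\ll n^i|y|^{-1-i-j}\asymp x^{-i}|y|^{-1-j}$, and summing over the $O(1+|y|/x)$ nonzero values of $n$ gives $\partial_x^i\partial_y^jh_2\ll x^{-1-i}|y|^{-j}$; together with the contribution of $h_1$ (present only when $j=0$, and then $\ll x^{-1-i}$) this proves (iii).

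The substance of the lemma is part (iv), where the crude bound $\partial_x^i\partial_y^jh\ll x^{-1-i-j}$ just obtained (which holds because $|y|>x/2$ forces $|y|^{-j}\ll x^{-j}$) must be sharpened by the factor $x^N+\min\{1,(x/|y|)^N\}$. The idea is to apply Poisson summation in the variable $n$ to $h_1$ and to $h_2$ separately, after differentiating termwise (which is legitimate by compact support). Because $\omega$ is smooth and supported away from $0$, the frequency-zero (constant-in-$n$) contributions are $c/x$ for $h_1$ and $-c/x$ for $h_2$ with the \emph{same} constant $c=\int_0^\infty t^{-1}\omega(t)\,dt$ — a short change of variables, $t\mapsto|y|/(xt)$, identifies the two integrals involved — so these main terms cancel exactly. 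The remaining frequencies $k\ne0$ are controlled by $N$-fold integration by parts: for $\partial_x^ih_1$ they contribute $O_N(x^{N-1-i})$, while for $h_2$ one views the differentiated summand as a bump function in $n$ of width $\asymp|y|/x$ and height $\asymp x^{-i}|y|^{-1-j}$, so that $N$ integrations by parts produce a saving $(x/|y|)^N$ and the $k\ne0$ part contributes $O_N(x^{-1-i-j}(x/|y|)^N)$. Combining, $\partial_x^i\partial_y^jh\ll_N x^{-1-i-j}\bigl(x^N+(x/|y|)^N\bigr)$ in the range $x\le1$, $|y|\ge x$; in the remaining ranges ($|y|<x$, where $\min\{1,(x/|y|)^N\}=1$, or $x\ge1$, where $h_1\equiv0$ and $x^N\ge1$) the crude bound already implies (iv). When $j\ge1$ the derivative $\partial_y^j$ annihilates $h_1$ entirely, wiping out both its constant term and its $O_N(x^{N-1-i})$ remainder, which is precisely why the $x^N$ summand may be dropped there. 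I expect this Poisson step to be the main obstacle: one must check that the two frequency-zero main terms genuinely coincide, so that the cancellation is exact rather than approximate, and keep track of the scaling of the differentiated summand so that integration by parts yields the clean saving $(x/|y|)^N$; once that bookkeeping is in place, the rest is routine.
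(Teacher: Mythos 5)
The paper itself does not prove this lemma; it defers to Section 4 of Heath-Brown's paper \cite{Heath-Brown}, and your sketch reconstructs essentially that argument. The decisive observation you make — that the zeroth Poisson frequency of $h_1$ and of $-h_2$ are both exactly $c/x$ with $c=\int_0^\infty t^{-1}\omega(t)\,dt$, because the substitution $u\mapsto|y|/(xu)$ carries one defining integral into the other, so the two main terms cancel identically — is precisely the heart of Heath-Brown's proof (whether one packages the remainder analysis via Poisson summation, as you do, or via Euler--Maclaurin is cosmetic). The supporting bookkeeping is sound: the support constraints $xn\in[\tfrac12,1]$ and $xn\in[|y|,2|y|]$ give (i) immediately and the crude term counts in (ii)--(iii); for (iv) the $N$-fold integration by parts over the nonzero frequencies yields the saving $x^{N}$ for $h_1$ and $(x/|y|)^N$ for $h_2$ (you should, as you note, fall back to the crude bound from (iii) when $|y|\asymp x$, where Poisson gains nothing but $\min\{1,(x/|y|)^N\}\asymp 1$); and the $x^N$ summand is traceable solely to the $h_1$ remainder, which is annihilated by $\partial_y$, explaining why it may be dropped when $j\geq1$. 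I see no gap.
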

The following expression for the $\delta$-symbol is then the promised starting point of the $\delta$-method.
\begin{lemma}
\label{dels}
For each $Q>1$ there exists a constant $c_Q$ with the property that the following holds for all $m\in\mathbb{Z}$:
$$\delta(m)=\frac{c_Q}{Q^2}\sum_{q=1}^\infty\underset{a\, (\mathrm{mod}\,q)}{\left.\sum\right.^{\ast}}e\left(\frac{am}{q}\right)h\left(\frac{q}{Q},\frac{m}{Q^2}\right)\text{.}$$
Moreover, the constants $c_Q$ satisfy $c_Q=1+O_A(Q^{-A})$ for any $A>0$.
\end{lemma}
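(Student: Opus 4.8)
The statement is Theorem~1 of Heath-Brown \cite{Heath-Brown}, and the plan is to reproduce its short proof. Write $c_q(n)=\sum_{a\,(\mathrm{mod}\,q)}^{\ast}e(an/q)$ for the Ramanujan sum and put
$$\Delta_Q(n):=\sum_{q=1}^{\infty}c_q(n)\,h\!\left(\frac{q}{Q},\frac{n}{Q^2}\right)\text{,}$$
a sum which by Lemma~\ref{h}(i) is in fact finite, the summand vanishing as soon as $q\geq\max\{Q,2|n|/Q\}$. It suffices to prove that $\Delta_Q(n)=0$ for every $n\neq0$ while $\Delta_Q(0)=Q^2\bigl(1+O_A(Q^{-A})\bigr)$ for all $A>0$; one then simply sets $c_Q:=Q^2/\Delta_Q(0)$, and the claimed identity $\delta(n)=\frac{c_Q}{Q^2}\Delta_Q(n)$ holds for $n=0$ by construction and for $n\neq0$ because both sides vanish.

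First I would unfold the definition of $h$. Since $h(q/Q,n/Q^2)=\sum_{j\geq1}\frac{Q}{qj}\bigl(\omega(qj/Q)-\omega(|n|/(Qqj))\bigr)$, substituting $q'=qj$ and noting that the coefficient $\frac{Q}{qj}=\frac{Q}{q'}$ and the two $\omega$-arguments depend on $q$ and $j$ only through $q'$ gives
$$\Delta_Q(n)=Q\sum_{q'=1}^{\infty}\frac{1}{q'}\left(\omega\!\left(\frac{q'}{Q}\right)-\omega\!\left(\frac{|n|}{Qq'}\right)\right)\sum_{q\mid q'}c_q(n)\text{.}$$
Grouping residues modulo $q'$ according to their greatest common divisor with $q'$ yields the elementary identity $\sum_{q\mid q'}c_q(n)=\sum_{a\,(\mathrm{mod}\,q')}e(an/q')=q'\cdot\mathbf{1}[q'\mid n]$, which collapses the display to
$$\Delta_Q(n)=Q\sum_{\substack{q'\geq1\\q'\mid n}}\left(\omega\!\left(\frac{q'}{Q}\right)-\omega\!\left(\frac{|n|}{Qq'}\right)\right)\text{.}$$
For $n\neq0$ the map $q'\mapsto|n|/q'$ is an involution of the set of positive divisors of $n$, so the two sums cancel and $\Delta_Q(n)=0$. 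For $n=0$ every $q'\geq1$ divides $n$, and since $\omega$ is supported in $[1/2,1]$ we have $\omega(0)=0$ and $\omega(q'/Q)=0$ for $q'\leq0$, so $\Delta_Q(0)=Q\sum_{q'\in\mathbb{Z}}\omega(q'/Q)$; Poisson summation, together with $\widehat{\omega}(0)=\int_{\mathbb{R}}\omega=1$ and the rapid decay of the Schwartz function $\widehat{\omega}$, gives $\sum_{q'\in\mathbb{Z}}\omega(q'/Q)=Q+O_A(Q^{1-A})$, hence $\Delta_Q(0)=Q^2\bigl(1+O_A(Q^{-A})\bigr)$. In particular $\Delta_Q(0)>0$ for $Q>1$, so $c_Q$ is well defined and $c_Q=1+O_A(Q^{-A})$.

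There is no genuine obstacle in this argument: absolute convergence is a non-issue because the $q$-sum defining $\Delta_Q(n)$ has only finitely many nonzero terms, and the only two points requiring a little care are the Ramanujan-sum identity $\sum_{q\mid q'}c_q(n)=q'\,\mathbf{1}[q'\mid n]$ and the Poisson-summation estimate for $\sum_{q'}\omega(q'/Q)$, both of which are standard. The one place where it matters that $\omega$ is supported away from the origin is in discarding the term $\omega(0)$ when $n=0$; this is precisely the feature responsible for the vanishing of $h(x,y)$ when $x$ is large recorded in Lemma~\ref{h}(i).
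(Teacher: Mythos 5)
Your argument is correct and is essentially Heath-Brown's original proof of his Theorem~1 in \cite{Heath-Brown}, which the paper invokes here without reproducing: unfold $h$, reindex by $q'=qj$, collapse the divisor sum of Ramanujan sums to $q'\,\mathbf{1}[q'\mid n]$, use the involution $q'\mapsto|n|/q'$ for $n\neq0$, and estimate $\Delta_Q(0)$ by Poisson summation. The only point worth flagging is that the positivity of $\Delta_Q(0)$ for all $Q>1$ (as opposed to $Q$ large) requires a word about the support and sign of $\omega$, though for the applications in this paper one always has $Q\to\infty$ so this is immaterial.
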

The left-hand side in Theorem \ref{notmain} can be rewritten as
$$\sum_{c\geq1}\lambda(c)N_c(B)$$
where
$$N_c(B)=\sum_{\substack{\mathbf{x}\in\mathbb{Z}^r\\Q_1(\mathbf{x})=c\\Q_2(\mathbf{x})=0}}w\left(\frac{\mathbf{x}}{B}\right)\text{.}$$
For reasons that have to do with the application of Voronoï summation (Lemma \ref{voronoi}) we will rewrite the above expression in a slightly more complicated way. First recall that, given that we are assuming that $Q_1(\mathbf{y})\gg1$ on the support of $w$, we have $B^2\ll Q_1(\mathbf{x})\ll B^2$ whenever $w(\mathbf{x}/B)\neq0$. It then follows that $B^2\ll c\ll B^2$ whenever $N_c(B)\neq0$. Therefore one may find a compactly supported function $V$ defined on $\mathbb{R}$, with support strictly contained in $\mathbb{R}^+$, such that the left-hand side in Theorem \ref{notmain} equals
\begin{equation}
\label{lhsnotmain}
\sum_{c\geq1}\lambda(c)V\left(\frac{c}{B^2}\right)N_c(B)\text{.}
\end{equation}
(Here $V$ is chosen so that $V$ evaluates to $1$ on the image of the support of $w$ under $Q_1$.) It follows from Lemma \ref{dels} (using $Q=B$) that
\begin{align*}
N_c(B)&=\frac{c_B}{B^2}\sum_{q_1=1}^\infty\underset{a_1\, (\mathrm{mod}\,q_1)}{\left.\sum\right.^{\ast}}\sum_{\substack{\mathbf{x}\in\mathbb{Z}^r\\Q_2(\mathbf{x})=0}}e\left(\frac{a_1(Q_1(\mathbf{x})-c)}{q_1}\right)h\left(\frac{q_1}{B},\frac{Q_1(\mathbf{x})-c}{B^2}\right)w\left(\frac{x}{B}\right)\\
&=\frac{c_B}{B^2}\sum_{q_1=1}^\infty\underset{a_1\, (\mathrm{mod}\,q_1)}{\left.\sum\right.^{\ast}}\sum_{\substack{\mathbf{x}\in\mathbb{Z}^r\\Q_2(\mathbf{x})\equiv0\, (\mathrm{mod}\,q_1)}}e\left(\frac{a_1(Q_1(\mathbf{x})-c)}{q_1}\right)h\left(\frac{q_1}{B},\frac{Q_1(\mathbf{x})-c}{B^2}\right)w\left(\frac{x}{B}\right)\delta\left(\frac{Q_2(\mathbf{x})}{q_1}\right)\text{.}
\end{align*}
As in \cite{Munshi}, we introduce a new smooth factor in our expression for $N_c(B)$ in order to control the range of moduli that will show up in our second application of the $\delta$-method. Fix once and for all an even smooth function $U$ supported on the interval $[-2,2]$ and such that $U(x)=1$ whenever $x\in[-1,1]$. We can therefore write
\begin{align*}
N_c(B)&=\frac{c_B}{B^2}\sum_{q_1=1}^\infty\underset{a_1\, (\mathrm{mod}\,q_1)}{\left.\sum\right.^{\ast}}\sum_{\substack{\mathbf{x}\in\mathbb{Z}^r\\Q_2(\mathbf{x})\equiv0\, (\mathrm{mod}\,q_1)}}e\left(\frac{a_1(Q_1(\mathbf{x})-c)}{q_1}\right)h\left(\frac{q_1}{B},\frac{Q_1(\mathbf{x})-c}{B^2}\right)\\
&\times w\left(\frac{x}{B}\right)\delta\left(\frac{Q_2(\mathbf{x})}{q_1}\right)U\left(\frac{Q_2(\mathbf{x})}{q_1B}\right) \text{.}
\end{align*}
We now apply Lemma \ref{dels} again to $\delta(Q_2(\mathbf{x})/q_1)$, this time with $Q=\sqrt{B}$, and at the same time replace $c_Q$ by $1$ at the cost of introducing an error term that is bounded by any negative power of $B$. This yields
\begin{equation}
\label{ncb}N_c(B)=\frac{1}{B^3}\sum_{q_1=1}^\infty\sum_{q_2=1}^\infty\underset{a_1\, (\mathrm{mod}\,q_1)}{\left.\sum\right.^{\ast}}\underset{a_2\, (\mathrm{mod}\,q_2)}{\left.\sum\right.^{\ast}}N_c(\mathbf{a},\mathbf{q};B)+O_A(B^{-A})
\end{equation}
where
\begin{align*}
N_c(\mathbf{a},\mathbf{q};B)&=\sum_{\substack{\mathbf{x}\in\mathbb{Z}^r\\Q_2(\mathbf{x})\equiv0\,(\mathrm{mod}\,q_1)}}e\left(\frac{a_1(Q_1(\mathbf{x})-c)q_2+a_2Q_2(\mathbf{x})}{q_1q_2}\right)\\
&\times h\left(\frac{q_1}{B},\frac{Q_1(\mathbf{x})-c}{B^2}\right)h\left(\frac{q_2}{\sqrt{B}},\frac{Q_2(\mathbf{m})}{q_1B}\right)U\left(\frac{Q_2(\mathbf{x})}{q_1B}\right)w\left(\frac{\mathbf{x}}{B}\right)\text{.}
\end{align*}
To the sum above we apply now the Poisson summation formula with modulus $q_1q_2$. This yields
$$N_c(\mathbf{a},\mathbf{q};B)=\frac{B^r}{(q_1q_2)^r}\sum_{\mathbf{m}\in\mathbb{Z}^r}I_{q_1,q_2,c}(\mathbf{m})\left(\underset{\mathbf{b}\, (\mathrm{mod}\,q_1q_2)}{\left.\sum\right.}e\left(\frac{a_1(Q_1(\mathbf{b})-c)q_2+a_2Q_2(\mathbf{b})+\mathbf{b}\cdot\mathbf{m}}{q_1q_2}\right)\right)$$
where
\begin{equation}
\label{iorig}I_{q_1,q_2,c}(\mathbf{m})=\int_{\mathbb{R}^r}h\left(\frac{q_1}{B},Q_1(\mathbf{y})-\frac{c}{B^2}\right)h\left(\frac{q_2}{\sqrt{B}},\frac{BQ_2(\mathbf{y})}{q_1}\right)U\left(\frac{BQ_2(\mathbf{y})}{q_1}\right)w(\mathbf{y})e\left(\frac{-B\mathbf{m}\cdot\mathbf{y}}{q_1q_2}\right)d\mathbf{y}\text{.}
\end{equation}
The tight support of $U$ implies that in the integral above we may restrict to values of $\mathbf{y}$ for which $BQ_2(\mathbf{y})/q_1\ll 1$, which in turn implies, taking into account property (i) in Lemma \ref{h}, that the integral vanishes for $q_2\gg\sqrt{B}$. Similarly, by using the same property on the first $h$-factor, we see that the integral vanishes for $q_1\gg B$ (here we are implicitly assuming that $c\ll B^2$, or else $N_c(B)=0$ by definition). We conclude, using \eqref{ncb}, that
\begin{equation}
\label{nc}
N_c(B)=B^{r-3}\sum_{\substack{q_1\ll B\\q_2\ll\sqrt{B}}}\frac{1}{(q_1q_2)^r}\sum_{\mathbf{m}\in\mathbb{Z}^r}C_{q_1,q_2,c}(\mathbf{m})I_{q_1,q_2,c}(\mathbf{m})+O_A(B^{-A})
\end{equation}
where
\begin{equation}
\label{corig}C_{q_1,q_2,c}(\mathbf{m})=\underset{a_1\, (\mathrm{mod}\,q_1)}{\left.\sum\right.^{\ast}}\underset{a_2\, (\mathrm{mod}\,q_2)}{\left.\sum\right.^{\ast}}\underset{\substack{\mathbf{b}\, (\mathrm{mod}\,q_1q_2)\\Q_2(\mathbf{b})\equiv0\, (\mathrm{mod}\,q_1)}}{\left.\sum\right.}e\left(\frac{a_1(Q_1(\mathbf{b})-c)q_2+a_2Q_2(\mathbf{b})+\mathbf{b}\cdot\mathbf{m}}{q_1q_2}\right)\text{.}
\end{equation}
We now arrive at our key application of Voronoï summation (Lemma \ref{voronoi}). We insert \eqref{nc} back into \eqref{lhsnotmain} and unravel the definition of the exponential sums $C_{q_1,q_2,c}$ \eqref{corig} and the exponential integrals $I_{q_1,q_2,c}$ \eqref{iorig}, and apply Lemma \ref{voronoi} with $q=q_1$ and
$$G(x)=V\left(\frac{x}{B^2}\right)h\left(\frac{q_1}{B},Q_1(\mathbf{y})-\frac{x}{B^2}\right)\text{.}$$
This yields
\begin{align*}
&\sum_{c\geq1}\lambda(c)V\left(\frac{c}{B^2}\right)h\left(\frac{q_1}{B},Q_1(\mathbf{y})-\frac{c}{B^2}\right)e\left(\frac{-a_1c}{q_1}\right)\\
&=\frac{B^2}{q_1}\chi_{D_1}(a_1)\chi_{D_2}(-q_1)A_{D_2}\sum_{m\geq1}\lambda_{D_2}(m)\widetilde{h}_{D_2,m}\left(\frac{q_1}{B},Q_1(\mathbf{y})\right)e\left(\frac{m\overline{aD_2}}{q_1}\right)\text{,}
\end{align*}
where
\begin{equation}
\label{htilde}\widetilde{h}_{k,m}(x,y)=\int_0^\infty V(u)h\left(x,y-u\right)J_0\left(\frac{4\pi}{x}\sqrt{\frac{mu}{k}}\right)du\text{.}
\end{equation}
%$$\int_0^\infty V\left(\frac{x}{B^2}\right)h\left(\frac{q_1}{B},Q_1(\mathbf{y})-\frac{x}{B^2}\right)J_0\left(\frac{4\pi}{q_1}\sqrt{\frac{mx}{D_2}}\right)dx$$
Here, as in the following, we set $D_1=\mathrm{gcd}(q_1,D)$ and $D_2=-D/D_1$. We conclude, therefore, that $\eqref{lhsnotmain}$ equals
\begin{equation}
\label{befneg}B^{r-1}\sum_{m\geq1}\sum_{\substack{q_1\ll B\\q_2\ll\sqrt{B}}}\frac{1}{q_1^{r+1}q_2^r}\lambda_{D_2}(m)\sum_{\mathbf{m}\in\mathbb{Z}^r}\widetilde{C}_{q_1,q_2,D_2,m}(\mathbf{m})\widetilde{I}_{q_1,q_2,D_2,m}(\mathbf{m})
\end{equation}
where
\begin{align}
\label{c}
&\widetilde{C}_{q_1,q_2,k,m}(\mathbf{m})\nonumber\\
&=\underset{a_1\, (\mathrm{mod}\,q_1)}{\left.\sum\right.^{\ast}}\underset{a_2\, (\mathrm{mod}\,q_2)}{\left.\sum\right.^{\ast}}\underset{\substack{\mathbf{b}\, (\mathrm{mod}\,q_1q_2)\\Q_2(\mathbf{b})\equiv0\, (\mathrm{mod}\,q_1)}}{\left.\sum\right.}\chi_{D_1}(a_1)e\left(\frac{(a_1Q_1(\mathbf{b})+\overline{a_1}m\overline{k})q_2+a_2Q_2(\mathbf{b})+\mathbf{b}\cdot\mathbf{m}}{q_1q_2}\right)
\end{align}
and
\begin{equation}
\label{i}
\widetilde{I}_{q_1,q_2,k,m}(\mathbf{m})=\int_{\mathbb{R}^r}\widetilde{h}_{k,m}\left(\frac{q_1}{B},Q_1(\mathbf{y})\right)h\left(\frac{q_2}{\sqrt{B}},\frac{BQ_2(\mathbf{y})}{q_1}\right)U\left(\frac{BQ_2(\mathbf{y})}{q_1}\right)w(\mathbf{y})e\left(\frac{-B\mathbf{m}\cdot\mathbf{y}}{q_1q_2}\right)d\mathbf{y}\text{.}
\end{equation}
Our first estimate, which in a sense justifies the use of Voronoï summation for this purpose, shows that values of $m$ with $m>B^\varepsilon$, for any fixed $\varepsilon>0$, make a negligible contribution to our analysis. We emphasize here that, throughout the argument, $\varepsilon$ will denote a small positive parameter that can take different values at different parts of the argument, as is customary in Analytic Number Theory.

\begin{lemma}
\label{negl}
The following hold for any $A>0$:
\begin{enumerate}[label=(\roman*)]
\item Suppose that $q_1<B^{1-\varepsilon}$. Then
$$\widetilde{h}_{k,m}\left(\frac{q_1}{B},Q_1(\mathbf{y})\right)\ll_{\varepsilon,k,A}B^{-A}\text{.}$$
\item We have (if $q_1\ll B$, as we may suppose)
$$\sum_{m>B^\varepsilon}\max_{k\mid D}\left|\widetilde{h}_{k,m}\left(\frac{q_1}{B},Q_1(\mathbf{y})\right)\right|\ll_{\varepsilon,A}B^{-A}\text{.}$$
\end{enumerate}
\end{lemma}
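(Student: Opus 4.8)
The plan is to exploit the largeness of the Bessel argument. Writing $x=q_1/B$, the argument in \eqref{htilde} is $z=\tfrac{4\pi}{x}\sqrt{mu/k}$, and since $u$ ranges over the fixed support of $V$ (so $u\asymp1$) and $k\mid D$ is bounded, we have $z\asymp \tfrac{B\sqrt m}{q_1}$ throughout the integral. In case (i) the hypothesis $q_1<B^{1-\varepsilon}$ forces $z\gg B^{\varepsilon}$, and in case (ii) the hypothesis $m>B^{\varepsilon}$ forces $z\gg B^{\varepsilon/2}$. I would first insert the classical asymptotic expansion $J_0(z)=e^{iz}W_+(z)+e^{-iz}W_-(z)+O_N(z^{-N})$ for $z\gg1$, where $W_\pm$ are smooth with $W_\pm^{(\nu)}(z)\ll_\nu z^{-1/2-\nu}$; the remainder then contributes $O_A(B^{-A})$ and is discarded, leaving two oscillatory integrals $\int_0^\infty V(u)\,h\!\big(x,Q_1(\mathbf{y})-u\big)\,W_\pm(z)\,e\!\big(\pm\tfrac{2}{x}\sqrt{mu/k}\big)\,du$.

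In each of these the phase $u\mapsto\pm\tfrac{2}{x}\sqrt{mu/k}$ has derivative of exact order $\asymp\tfrac{\sqrt m}{x}=\tfrac{B\sqrt m}{q_1}$ on the support of the amplitude (there is no stationary point, as $u$ stays bounded away from $0$), so one may integrate by parts repeatedly, each step gaining a factor $\asymp x/\sqrt m$. It remains to estimate the amplitude derivatives, which I would organise by splitting the $u$-range (equivalently, the range of $v:=Q_1(\mathbf{y})-u$, the second argument of $h$) and applying Lemma \ref{h}. On the range $|v|\le x/2$ the function $h(x,\cdot)$ is constant and $\ll x^{-1}$, so only $V$ and $W_\pm$ carry $u$-derivatives, each $\ll(x/\sqrt m)^{1/2}$, and $N$ integrations by parts bound this contribution by $\ll(q_1/(B\sqrt m))^{N+1/2}$, which is $\ll B^{-A}$ for $N$ large (and is summable over $m$ in case (ii)). On a dyadic range $|v|\asymp\Delta$ with $\Delta\gg x$, part (iv) of Lemma \ref{h} (dropping the $x^N$ term whenever a $v$-derivative is present, and using the $\min\{1,(x/|v|)^N\}$ factor otherwise) shows $h$ and all its derivatives are negligible, so these ranges also contribute $O_A(B^{-A})$.

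The critical range, and the step I expect to be the main obstacle, is the near-diagonal one $|v|\asymp x=q_1/B$: there $h$ attains its maximal size $\asymp B/q_1$ and is irregular precisely at scale $q_1/B$, so each integration by parts gains $\asymp q_1/(B\sqrt m)$ but a derivative falling on $h$ costs $\asymp B/q_1$, and these two effects must be made to cooperate to extract an arbitrary power of $B$. This is where one must use the quantitative derivative bounds of Lemma \ref{h}(iv) in full, the lower bound $z\gg B^{\varepsilon/2}$ on the oscillation rate, and --- in case (ii) --- the averaging over $m>B^{\varepsilon}$, in concert. A convenient repackaging of this estimate uses Proposition \ref{tri}: for fixed $q_1$ and $\mathbf{y}$, the quantity $\widetilde{h}_{k,m}(q_1/B,Q_1(\mathbf{y}))$ is $\pi^{-1}$ times the value, at a point of modulus $\asymp\tfrac{B\sqrt m}{q_1}$, of the Fourier transform of the planar radial function $\mathbf{z}\mapsto V(|\mathbf{z}|^2)\,h\!\big(q_1/B,\,Q_1(\mathbf{y})-|\mathbf{z}|^2\big)$, which is smooth and compactly supported; its Fourier transform therefore decays faster than any power of the modulus, with implied constants controlled by the derivative estimates of Lemma \ref{h}, and the lemma is precisely the assertion that these constants are small enough relative to $B^{\varepsilon}$.
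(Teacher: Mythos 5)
Your derivative bookkeeping and range decomposition are correct, and you have pinpointed the exact obstruction -- but you have not resolved it, and as written the argument establishes only part (ii). In the critical band $|v|\asymp q_1/B$, each integration by parts gains a factor $\asymp q_1/(B\sqrt m)$ from the phase and costs a factor $\asymp B/q_1$ when the derivative falls on $h$ (Lemma \ref{h}), so the two powers of $q_1/B$ cancel exactly. After $N$ steps, taking into account that the band has width $\asymp q_1/B$, that $|h|\asymp B/q_1$ there, and the Bessel amplitude, one is left with $\ll(q_1/(B\sqrt m))^{1/2}m^{-N/2}$. This suffices for (ii) since $m>B^{\varepsilon}$, but for (i) with $m$ as small as $1$ it gives only $(q_1/B)^{1/2}\ll B^{-\varepsilon/2}$, nowhere near $B^{-A}$. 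Your closing appeal to Proposition \ref{tri} merely repackages the same count: the planar function $\mathbf{x}\mapsto V(|\mathbf{x}|^2)\,h(q_1/B,\,Q_1(\mathbf{y})-|\mathbf{x}|^2)$ has $N$-th partial of $L^1$-size $\asymp(B/q_1)^N$ (height $\asymp(B/q_1)^{N+1}$ on a shell of width $\asymp q_1/B$, by Lemma \ref{h}(iv)), and dividing by $|\mathbf{z}^{(1)}|^N\gg(\sqrt m\,B/q_1)^N$ returns $m^{-N/2}$ again. Declaring that ``the lemma is precisely the assertion that these constants are small enough'' is circular, not a proof.

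For comparison, the paper's own proof also passes through Proposition \ref{tri}, integrates by parts $N$ times in one coordinate, and then writes $\ll_N|\mathbf{z}^{(1)}|^{-N}$; read literally, this asserts $\|\partial_{x_1}^N(Vh)\|_{L^1}\ll_N1$ uniformly in $q_1/B$, whereas the estimate above from Lemma \ref{h}(iv) gives $\asymp(B/q_1)^N$ and hence $m^{-N/2}$ once more. So your proposal and the paper's are essentially the same mechanism in two disguises, both handle (ii) cleanly, and neither, as you have each written it, closes (i). Finishing (i) needs an additional input beyond the bare $L^1$--$L^\infty$ integration-by-parts count: for instance, a genuinely sharper evaluation of the Hankel transform of the near-diagonal spike precisely at its transition frequency $|\mathbf{z}|\asymp B/q_1$, or a rearrangement downstream so that the truncation $q_1\geq B^{1-\varepsilon}$ is never needed.
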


\begin{obs}
In practice, the dependence on $k$ in the above estimate is irrelevant for our purposes since in applications $k=D_2\mid D$, so $k$ only takes finitely many values.
\end{obs}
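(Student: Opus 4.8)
The plan is entirely bookkeeping, and there is no real mathematical content to establish. First I would recall from \eqref{befneg} that, in our treatment of \eqref{lhsnotmain}, the parameter $k$ attached to the quantities $\widetilde h_{k,m}$, $\widetilde C_{q_1,q_2,k,m}$ and $\widetilde I_{q_1,q_2,k,m}$ is always the specific value $k = D_2 = -D/D_1$, where $D_1 = \gcd(q_1, D)$. Since $D_1$ is a positive divisor of the fixed integer $D$, so is $D_2$; thus $k$ ranges over the finite set $\{d > 0 : d \mid D\}$, whose cardinality $\tau(|D|)$ is independent of $B$ and of all of the summation variables $q_1$, $q_2$, $c$, $\mathbf{m}$, $\mathbf{y}$.

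Next I would observe that, because of this, any estimate proved with a constant depending on $k$---most notably part (i) of Lemma \ref{negl}---can be converted into one with a uniform constant simply by taking the maximum over the finitely many divisors $k \mid D$. That maximum is a genuine constant depending only on the remaining explicit parameters (here $\varepsilon$ and $A$) together with $D$, which is fixed throughout the paper; in particular it is uniform in $q_1$, $q_2$, $c$, $\mathbf{m}$ and $B$. This is precisely the form in which such bounds will be invoked in sections \S\ref{expsums}, \S\ref{expints} and \S\ref{endgame}, so from that point on I would suppress the $k$-dependence of implied constants without further comment; note that part (ii) of Lemma \ref{negl} already incorporates a $\max_{k\mid D}$ on its left-hand side, so it is literally of the stated uniform shape.

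The only point requiring any care, and it is not really an obstacle, is to keep the quantifiers in the correct order: the constant may be allowed to depend on $D$ but must then be uniform over everything else. This is harmless precisely because the divisor count $\tau(|D|)$ does not grow with $B$, so taking a maximum over $k \mid D$ never interferes with a power saving in $B$.
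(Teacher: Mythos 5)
Your proposal is correct and coincides with what the paper intends: the observation is a pure bookkeeping remark, and the paper offers no further argument beyond noting that $k=D_2$ divides $D$ and hence takes only finitely many values, so $k$-dependent implied constants can be absorbed into $D$-dependent (hence absolute) ones by maximizing over divisors. Your elaboration of the quantifier order and the uniformity in $B$ is a faithful, if slightly more explicit, rendering of the same point.
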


\begin{proof}
We use Proposition \ref{tri}. Recall the definition of $\widetilde{h}_{k,m}$ in \eqref{htilde}. Choose any $\mathbf{z}=(\mathbf{z}^{(1)},\mathbf{z}^{(2)})\in\mathbb{R}^2$ with $|\mathbf{z}^{(1)}|>|\mathbf{z}^{(2)}|$ and
$$|\mathbf{z}|^2=\frac{4mB^2}{kq_1^2}\text{.}$$
Then $|\mathbf{z}^{(1)}|\gg m^{1/2}B/(k^{1/2}q_1)$, and Proposition \ref{tri} yields
$$\widetilde{h}_{k,m}\left(\frac{q_1}{B},Q_1(\mathbf{y})\right)=\frac{1}{\pi}\int_{\mathbb{R}^2}V(|\mathbf{x}|^2)h\left(\frac{q_1}{B},Q_1(\mathbf{y})-|\mathbf{x}|^2\right)e(-\mathbf{z}\cdot\mathbf{x})d\mathbf{x}\text{.}$$
We now integrate by parts $N$ times with respect to the first variable and estimate trivially, yielding
\begin{align*}\widetilde{h}_{k,m}\left(\frac{q_1}{B},Q_1(\mathbf{y})\right)&=\frac{1}{\pi(-2\pi i\mathbf{z}^{(1)})^N}\int_{\mathbb{R}^2}\frac{\partial^N}{\partial x_1^N}\left(V(|\mathbf{x}|^2)h\left(\frac{q_1}{B},Q_1(\mathbf{y})-|\mathbf{x}|^2\right)\right)e(-\mathbf{z}\cdot\mathbf{x})d\mathbf{x}\\
&\ll_N\frac{1}{|\mathbf{z}^{(1)}|^N}\\
&\ll_{k,N}\frac{q_1^N}{B^Nm^{N/2}}\text{.}
\end{align*}
In particular, under the assumption of (i), we have
$$\widetilde{h}_{k,m}\left(\frac{q_1}{B},Q_1(\mathbf{y})\right)\ll_{k,N}\left(\frac{q_1}{B}\right)^N<B^{-\varepsilon N}\text{,}$$
which, upon choosing $N$ such that $\varepsilon N>A$, implies (i). For (ii) we observe that (since $q_1\ll B$ by assumption)
$$\sum_{m>B^\varepsilon}\max_{k\mid D}\left|\widetilde{h}_{k,m}\left(\frac{q_1}{B},Q_1(\mathbf{y})\right)\right|\ll_{N}\sum_{m>B^\varepsilon}m^{-N/2}\ll_N (B^\varepsilon)^{1-N/2}\text{,}$$
and then (ii) follows upon choosing $N$ such that $\varepsilon(N/2-1)>A$.
\end{proof}
It is a straightforward matter to conclude from Lemma \ref{negl} that the left-hand side in Theorem \ref{notmain} (which, as we know, equals \eqref{befneg}) is
$$B^{r-1}\sum_{m\leq B^\varepsilon}\sum_{\substack{B^{1-\varepsilon}\leq q_1\ll B\\q_2\ll\sqrt{B}}}\frac{1}{q_1^{r+1}q_2^r}\lambda_{D_2}(m)\sum_{\mathbf{m}\in\mathbb{Z}^r}\widetilde{C}_{q_1,q_2,D_2,m}(\mathbf{m})\widetilde{I}_{q_1,q_2,D_2,m}(\mathbf{m})+O_{\varepsilon,A}(B^{-A})\text{.}$$
We will estimate the sum
\begin{equation}
\label{todo}
\sum_{\substack{B^{1-\varepsilon}\leq q_1\ll B\\q_2\ll\sqrt{B}}}\frac{1}{q_1^{r+1}q_2^r}\sum_{\mathbf{m}\in\mathbb{Z}^r}\widetilde{C}_{q_1,q_2,D_2,m}(\mathbf{m})\widetilde{I}_{q_1,q_2,D_2,m}(\mathbf{m})
\end{equation}
separately for each $m\leq B^\varepsilon$, which should not be wasteful in light of the narrow range of $m$. For this we will study the exponential sums $\widetilde{C}_{q_1,q_2,k,m}$ and the exponential integrals $\widetilde{I}_{q_1,q_2,k,m}$ in detail in sections \S\ref{expsums} and \S\ref{expints}, respectively.

\section{Exponential sums}
\label{expsums}
The reader may compare our exponential sums $\widetilde{C}_{q_1,q_2,k,m}$ to the ones studied in \cite{Munshi}, which amount to considering the case $m=0$. The very first step in Munshi's approach in \cite{Munshi} is to execute the sum over $a_1$, which in his setup is a simple Ramanujan sum and can be computed explicitly. However, in our setup, the sum over $a_1$ is typically a Kloosterman sum, and hence we cannot expect to evaluate it explicitly for most moduli $q_1$. Ignoring the cancellation that occurs in the sum over $a_1$, on the other hand, would produce a bound far too weak for our purposes. This means that, for the most part, the basic strategy in \cite{Munshi} cannot be carried over to the present work.

However, in some exceptional cases, we shall indeed produce a bound for $\widetilde{C}_{q_1,q_2,k,m}$ by summing trivially over $a_1$. Once one considers only the absolute value of the part of $\widetilde{C}_{q_1,q_2,k,m}$ that comes from a fixed $a_1\,(\mathrm{mod}\,q_1)$, our sums and those of Munshi present no differences. So the bounds in \cite{Munshi} that arise from summing trivially over $a_1$ apply to our setup without any changes. We shall make use of that in order to condense the exposition and avoid repeating arguments already present in \cite{Munshi}. The following table indicates which bounds in this section exploit cancellation over $a_1$ (and hence require original arguments) and which bounds do not and are hence deferred to \cite{Munshi}.

\begin{table}[htbp]
\caption{Exponential sum estimates according to proof methods}
\begin{tabular}{|c|c|c|}
\hline
Sum over $a_1$ non-trivially & Sum over $a_1$ trivially & $q_1=1$\\
\hline
Lemma \ref{cp1} & Lemma \ref{gencq1} & Lemma \ref{goodc1q}\\
Lemma \ref{cpc1} & Second half of Lemma \ref{mix} & Lemma \ref{badc1q} \\
First half of Lemma \ref{mix}& & \\
\hline
\end{tabular}
\end{table}

Before we start our actual study of $\widetilde{C}_{q_1,q_2,k,m}$ we make some relevant geometric considerations. We begin with the following lemma.

\begin{lemma}
\label{eqd}
For some quadratic form $Q\in\mathbb{Z}[x_1,\ldots,x_r]$ and some $a\neq0$, consider the variety $W\in\mathbb{P}^{r-1}$ defined by the equations $Q(\mathbf{x})=Q_2(\mathbf{x})=0$ and the variety $W_a\subseteq\mathbb{P}^r$ (with homogeneous coordinates $\mathbf{x},x$) defined by the equations
$$\begin{cases}
&Q(\mathbf{x})-ax^2=0\\
&Q_2(\mathbf{x})=0\text{.}
\end{cases}$$
Then the singular loci of $W$ and $W_a$ have the same dimension.
\end{lemma}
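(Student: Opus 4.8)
The plan is to compare the two varieties by viewing $W_a$ as a (partial) cone over $W$ and tracking how the extra variable $x$ interacts with the singularities. Write $q+2$ for the number of homogeneous coordinates so that $W \subseteq \mathbb{P}^{r-1}$ and $W_a \subseteq \mathbb{P}^r$; let $\mathbf{x} = (x_1,\dots,x_r)$ and let $x$ be the new coordinate. The singular locus of $W$ (as a scheme-theoretic intersection, which is the relevant notion since we only care about dimension and both varieties are cut out by two equations) is where the $2 \times r$ Jacobian $\begin{pmatrix} \nabla Q \\ \nabla Q_2 \end{pmatrix}$ has rank $\le 1$ together with $Q = Q_2 = 0$; the singular locus of $W_a$ is where the $2 \times (r+1)$ Jacobian $\begin{pmatrix} \nabla Q & -2ax \\ \nabla Q_2 & 0 \end{pmatrix}$ has rank $\le 1$ together with $Q - ax^2 = Q_2 = 0$.

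First I would handle the locus $x = 0$. On $\{x=0\}$ the equations for $W_a$ reduce to $Q(\mathbf{x}) = Q_2(\mathbf{x}) = 0$, i.e. to $W$ sitting inside the hyperplane $\{x=0\} \cong \mathbb{P}^{r-1}$, and the Jacobian rank condition becomes exactly the rank condition defining $\mathrm{Sing}(W)$ (the extra column $-2ax$ vanishes identically on this locus, so it does not affect the rank). Hence $\mathrm{Sing}(W_a) \cap \{x=0\}$ is isomorphic, as a scheme, to $\mathrm{Sing}(W)$. Next I would show that $\mathrm{Sing}(W_a)$ is in fact entirely contained in $\{x = 0\}$. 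Suppose $[\mathbf{x}:x] \in \mathrm{Sing}(W_a)$ with $x \neq 0$. The rank $\le 1$ condition on $\begin{pmatrix} \nabla Q(\mathbf{x}) & -2ax \\ \nabla Q_2(\mathbf{x}) & 0 \end{pmatrix}$ forces, looking at the last column, either $\nabla Q_2(\mathbf{x}) = 0$, or the whole second row is a multiple of the first — but the last entry of the second row is $0$ while the last entry of the first is $-2ax \neq 0$, so the multiple must be $0$, again giving $\nabla Q_2(\mathbf{x}) = 0$. But $\nabla Q_2(\mathbf{x}) = 0$ together with $Q_2(\mathbf{x}) = 0$ would (for $\mathbf{x}$ not the zero vector, which holds since $[\mathbf{x}:x]$ is a projective point and we may also use $Q(\mathbf{x}) = ax^2 \neq 0$ to see $\mathbf{x} \neq 0$) say that $Q_2$ is singular at $\mathbf{x}$ — contradicting the hypothesis, carried throughout the paper, that the base quadric data is such that $Q_2$ defines a smooth variety (equivalently $\nabla Q_2$ does not vanish on its zero locus away from the origin). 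Actually one should be slightly careful: the relevant hypothesis in this section is on $Q_2$ alone, and $\nabla Q_2(\mathbf{x}) = 0$ with $\mathbf{x} \neq 0$ is impossible precisely because $Q_2$ is (assumed) nondegenerate as a quadratic form in $x_1,\dots,x_r$; I would invoke exactly that.

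Combining the two steps, $\mathrm{Sing}(W_a) = \mathrm{Sing}(W_a) \cap \{x=0\} \cong \mathrm{Sing}(W)$ as schemes, so in particular the two singular loci have the same dimension, which is the assertion. The main obstacle — really the only subtlety — is making sure the argument uses only hypotheses that are genuinely available: one must confirm that $Q_2$ being nondegenerate (so that $\nabla Q_2$ has no nontrivial zero) is part of the standing assumptions on $Q_2$, rather than something only known about the joint variety $V(Q_1,Q_2)$; if instead one only knows the intersection $Q = Q_2 = 0$ is as required, one would phrase the $x \neq 0$ step by noting $\nabla Q_2(\mathbf{x}) = 0$ together with $Q_2(\mathbf{x})=0$ and $Q(\mathbf{x}) = ax^2$ makes the point $[\mathbf{x}:x]$ forces $\mathbf{x}$ to be a singular point of the quadric $Q_2 = 0$ in $\mathbb{P}^{r-1}$, contradicting smoothness of that quadric. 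Everything else is a direct unwinding of the Jacobian criterion and involves no real computation.
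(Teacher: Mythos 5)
Your proof is correct and follows essentially the same route as the paper's: both treat the $x=0$ locus by observing the equations and Jacobian condition reduce to those of $W$, and both rule out $x\neq 0$ by using the rank condition to force $\nabla Q_2(\mathbf{x})=0$ and then invoking the non-singularity of $Q_2$ (the paper deduces $\mathbf{x}=0$ and notes the point then fails the equation $Q(\mathbf{x})-ax^2=0$; you instead use $Q(\mathbf{x})=ax^2\neq 0$ to see $\mathbf{x}\neq 0$ and contradict non-degeneracy directly — logically equivalent). Your caution about whether the non-degeneracy of $Q_2$ is available is well placed, and in fact it does follow from the standing hypothesis that $X$ has only the two singular points $P_1,P_2$: a nonzero $\mathbf{v}$ in the radical of $Q_2$ would produce an extra singular point of $X$ (with $\mathbf{x}$-coordinate $\mathbf{v}\neq 0$) at any $(x_{n-1},x_n)$ satisfying $F(x_{n-1},x_n)=Q_1(\mathbf{v})$, since the second row of the Jacobian of $X$ vanishes there; and the subcase $Q_1(\mathbf{v})=0$ would make $V(Q_1,Q_2)$ singular, again contradicting the setup.
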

\begin{proof}
Suppose $\mathbf{x},x$ are such that the point with homogeneous coordinates $\mathbf{x},x$ lies in $V_a$ and
$$\rank\begin{pmatrix}\nabla Q(\mathbf{x})&-2ax\\\nabla Q_2(\mathbf{x})&0\\\end{pmatrix}<2\text{.}$$
If $x=0$, then $\mathbf{x}$ is a singular point in $W$ and the considered singular point of $W_a$ arises from the natural embedding of $W$ into $W_a$. Otherwise, if $x\neq0$, then the rank condition implies that $\nabla Q_2(\mathbf{x})=0$, which forces $\mathbf{x}=0$ since $Q_2$ is non-singular. But then the point with homogeneous coordinates $\mathbf{x},x$ does not lie in $V_a$, which is a contradiction. Therefore, the singular locus of $W_a$ is the image of the singular locus of $W$ under the natural embedding of $W$ into $W_a$. This implies that the two singular loci have the same dimension.
\end{proof}
Given an integer vector $\mathbf{m}\in\mathbb{Z}^b$, denote by $H_\mathbf{m}$ the hyperplane in $\mathbb{P}^{b-1}$ given by the equation $\mathbf{m}\cdot\mathbf{x}=0$. Given a smooth variety $W\subseteq\mathbb{P}^{b-1}$, one may construct the so-called \emph{dual variety} $W^\ast\subseteq(\mathbb{P}^{b-1})^\ast$, the rational points of which correspond to those $\mathbf{m}$ for which the (scheme-theoretic) intersection of $H_\mathbf{m}$ with $W$ is singular. We may consider the incidence variety
$$I=\{(x,H)\in W\times(\mathbb{P}^{b-1})^\ast:H\supseteq\mathbb{T}_x(W)\}$$
where $\mathbb{T}_x(W)$ denotes the tangent space to $W$ at $x$. The variety $I$ comes equipped with two natural projections
\begin{center}
\begin{tikzcd}
& I \arrow[ld, "\pi_1"'] \arrow[rd, "\pi_2"] & \\
W & & (\mathbb{P}^{b-1})^\ast
\end{tikzcd}
\end{center}
and all fibers of $\pi_1$ are irreducible have the same dimension $b-2-\dim(W)$. Hence $I$ is irreducible, and so is its image under $\pi_2$, which is the dual variety $W^\ast$.

Two dual varieties will be relevant for our work. One of them is the dual variety $V_{k,m}^\ast$ of the variety $V_{k,m}$ determined in $\mathbb{P}^r$ by the equations
$$\begin{cases}
&4mQ_1(\mathbf{x})-kx^2=0\\
&Q_2(\mathbf{x})=0\text{.}
\end{cases}$$
Note that $V_{k,m}$ is smooth by Lemma \ref{eqd}, since $V=V(Q_1,Q_2)$ is smooth by assumption. Work of Ein (\cite{Ein}, Proposition 3.1) and Aznar (\cite{Aznar}, Theorem 3) shows that $V^\ast$ is an irreducible hypersurface in $(\mathbb{P}^r)^\ast$ of degree $4r-4$. We will denote by $G$ a homogeneous form of degree $4r-4$ with integer coefficients that defines $V_{k,m}^\ast$, and abbreviate it simply to $G$ when $k,m$ are clear from context. The other dual variety we need to consider is simply that of the hypersurface in $\mathbb{P}^{r-1}$ cut out by $Q_2$. It is a classical fact that this is a hypersurface in $(\mathbb{P}^{r-1})^\ast$ defined by the equation $Q_2^\ast=0$, where $Q_2^\ast$ is the quadratic form having matrix
$$\det\mathbf{M}_2(\mathbf{M}_2)^{-1}$$
where $\mathbf{M}_2$ is the matrix of $Q_2$.

We begin our study of exponential sums with the following typical multiplicativity relation, which renders it sufficient to understand $\widetilde{C}_{q_1,q_2,k,m}$ when $q_1$ and $q_2$ are powers of the same prime.

\begin{lemma}
\label{multrel}
Assume that in the graph below connected integers are coprime.
\begin{center}
\begin{tikzcd}
q_1' \arrow[rd, no head] \arrow[r, no head] & q_1'' \arrow[ld, no head] \\
q_2' \arrow[r, no head] & q_2''
\end{tikzcd}
\end{center}
Suppose $q_1=q_1'q_1''$ and $q_2=q_2'q_2''$. Then we have
$$\widetilde{C}_{q_1,q_2,k,m}(\mathbf{m})=\chi_{D_1'}(q_1'')\chi_{D_1''}(q_1')\widetilde{C}_{q_1',q_2',k,m}\left(\overline{q_2''}\mathbf{m}\right)\widetilde{C}_{q_1'',q_2'',k,m}\left(\overline{q_2'}\mathbf{m}\right)\text{.}$$
\end{lemma}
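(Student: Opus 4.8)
The plan is a Chinese Remainder Theorem factorization of the exponential sum \eqref{c}, carried out with care for the Jacobi-symbol twist $\chi_{D_1}$ and for the several inverses appearing in it. Write $q'=q_1'q_2'$ and $q''=q_1''q_2''$; the edges of the graph say exactly that $\gcd(q_1',q_1'')=\gcd(q_1',q_2'')=\gcd(q_1'',q_2')=\gcd(q_2',q_2'')=1$, so that $\gcd(q',q'')=1$ while $q_1'$ and $q_2'$ (respectively $q_1''$ and $q_2''$) need not be coprime. I would decompose the three summation variables accordingly by CRT: $a_1\pmod{q_1}$ along $q_1=q_1'q_1''$, $a_2\pmod{q_2}$ along $q_2=q_2'q_2''$, and $\mathbf b\pmod{q_1q_2}$ along $q_1q_2=q'q''$, writing $\alpha'=a_1\bmod q_1'$, $a_2'=a_2\bmod q_2'$, $\mathbf b'=\mathbf b\bmod q'$, and similarly with double primes. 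The character splits as $\chi_{D_1}(a_1)=\chi_{D_1'}(\alpha')\,\chi_{D_1''}(\alpha'')$ with $D_1'=\gcd(q_1',D)$, $D_1''=\gcd(q_1'',D)$: since $D$ is squarefree and $\gcd(q_1',q_1'')=1$ one has $D_1=D_1'D_1''$ with $\gcd(D_1',D_1'')=1$, and the normalizing sign in $N\mapsto N^\ast$ is multiplicative because $(D_1')^\ast,(D_1'')^\ast\equiv1\pmod4$ force $(D_1')^\ast(D_1'')^\ast=(D_1'D_1'')^\ast$.

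Next I would split each of the three pieces of the argument of $e(\cdot)$ in \eqref{c} into a $q'$-fraction plus a $q''$-fraction by partial fractions --- for the piece $\frac{a_2Q_2(\mathbf b)}{q_1q_2}$ one first uses $Q_2(\mathbf b)\equiv0\pmod{q_1}$ to rewrite it as $\frac{a_2(Q_2(\mathbf b)/q_1)}{q_2}$ --- which introduces inverses of $q_1''$ modulo $q_1'$, of $q''$ modulo $q'$, and so on. The whole summand then factors as a product of a ``$q'$-block'' in the variables $\alpha',a_2',\mathbf b'$ and a symmetric ``$q''$-block''. The heart of the matter is to massage the $q'$-block into exactly the form of $\widetilde C_{q_1',q_2',k,m}(\,\cdot\,)$, which I would do by scalar substitutions on the $q'$-variables: $\mathbf b'=q_1''\mathbf c'$ (a bijection modulo $q'$, since $\gcd(q_1'',q')=1$), which turns the linear term into $\frac{\mathbf c'\cdot\overline{q_2''}\mathbf m}{q'}$ and scales the quadratic terms by $(q_1'')^2$; a scalar substitution on $a_2'$ absorbing the residual constant into the $Q_2$-term; and finally $\alpha'=\overline{q_1''}\,\beta'$ on the remaining variable. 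This last substitution is the delicate one, and it works precisely because $\alpha'$ occurs both linearly (through $\alpha'Q_1(\mathbf b')$) and through its inverse (through $\overline{\alpha'}m\overline k$): replacing $\alpha'$ by $\overline{q_1''}\beta'$ sends $\overline{q_1''}\cdot\alpha'(q_1'')^2\mapsto\beta'$ and $\overline{q_1''}\cdot\overline{\alpha'}\mapsto\overline{\beta'}$ at the same time, so the $a_1$-piece becomes exactly $\frac{\beta'Q_1(\mathbf c')+\overline{\beta'}m\overline k}{q_1'}$ with no spurious twist of $k$. The character contributes $\chi_{D_1'}(\alpha')=\chi_{D_1'}(\overline{q_1''})\chi_{D_1'}(\beta')=\chi_{D_1'}(q_1'')\chi_{D_1'}(\beta')$, the last step because $\chi_{D_1'}$ is real-valued. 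Thus the $q'$-block equals $\chi_{D_1'}(q_1'')\,\widetilde C_{q_1',q_2',k,m}(\overline{q_2''}\mathbf m)$ and, symmetrically, the $q''$-block equals $\chi_{D_1''}(q_1')\,\widetilde C_{q_1'',q_2'',k,m}(\overline{q_2'}\mathbf m)$; multiplying yields the claim. One checks along the way that every quantity being inverted is indeed invertible modulo the relevant modulus, which is exactly what the coprimality hypotheses (together with $\gcd(k,q_1)=1$, which holds for $k=D_2$ in applications) guarantee.

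I expect the only real obstacle to be organizational: keeping straight, after the partial-fraction decompositions, which modulus each inverse refers to, and verifying that the substitution on $\alpha'$ leaves the factor $m\overline k$ untouched (so that $k$ is not silently replaced by a twist of itself). Both the CRT factorization and the multiplicativity of $\chi_{D_1}$ are routine once the squarefreeness of $D$ is invoked, so no genuinely new idea is required; the lemma is precisely the multiplicativity statement that accompanies any two-moduli version of the $\delta$-method.
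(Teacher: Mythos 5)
Your proposal is correct and follows essentially the same route as the paper: a CRT factorization of the summand combined with the partial-fraction identity $\frac{1}{q_1q_2}\equiv\frac{\overline{q_1''q_2''}}{q_1'q_2'}+\frac{\overline{q_1'q_2'}}{q_1''q_2''}\pmod 1$, multiplicativity of $\chi_{D_1}$ (using squarefreeness of $D$), and a scalar change of variables on $(a_1,a_2,\mathbf b)$ to normalize each factor. The only cosmetic difference is that the paper first arrives at $\widetilde{C}_{q_1',q_2',kq_1''^2,m}(\overline{q_1''q_2''}\mathbf m)$ with a twisted parameter $kq_1''^2$ and then removes the twist by the substitution $(a_1,a_2,\mathbf b)\mapsto(\overline{q_1''}^2a_1,\overline{q_1''}^2a_2,q_1''\mathbf b)$, whereas you perform those substitutions inline so the twist never appears explicitly; the two are mathematically identical.
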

\begin{proof}
We use the equality
$$\frac{1}{q_1q_2}\equiv\frac{\overline{q_1''q_2''}}{q_1'q_2'}+\frac{\overline{q_1'q_2'}}{q_1''q_2''}\,(\mathrm{mod}\,1)\text{.}$$
This implies
\begin{align}&\label{precrt}\frac{(a_1Q_1(\mathbf{b})+\overline{a_1}m\overline{k})q_2+a_2Q_2(\mathbf{b})+\mathbf{b}\cdot\mathbf{m}}{q_1q_2}\\
&\equiv\frac{(\overline{q_1''}a_1Q_1(\mathbf{b})+\overline{q_1''}\overline{a_1}m\overline{k})q_2'+\overline{q_1''q_2''}a_2Q_2(\mathbf{b})+\mathbf{b}\cdot\overline{q_1''q_2''}\mathbf{m}}{q_1'q_2'}\nonumber\\
&+\frac{(\overline{q_1'}a_1Q_1(\mathbf{b})+\overline{q_1'}\overline{a_1}m\overline{k})q_2''+\overline{q_1'q_2'}a_2Q_2(\mathbf{b})+\mathbf{b}\cdot\overline{q_1'q_2'}\mathbf{m}}{q_1''q_2''}\,(\mathrm{mod}\,1)\text{.}\nonumber
\end{align}
By the Chinese Remainder Theorem, as $a_1$ ranges over $(\mathbb{Z}/q_1\bZ)^\times$, the pair $(a_1',a_1'')=(\overline{q_1''}a_1,\overline{q_1'}a_1)$ ranges over $(\bZ/q_1'\bZ)^\times \times(\bZ/q_1''\bZ)^\times$. Similarly as $a_2$ ranges over $(\bZ/q_2\bZ)^\times$ the pair $(a_2',a_2'')$ ranges over $(\bZ/q_2'\bZ)^\times \times(\bZ/q_2''\bZ)^\times$. We can rewrite the above sum as
\begin{align}
&\frac{(a_1'Q_1(\mathbf{b})+\overline{a_1'}m\overline{kq_1''^2})q_2'+a_2'Q_2(\mathbf{b})+\mathbf{b}\cdot\overline{q_1''q_2''}\mathbf{m}}{q_1'q_2'}\nonumber\\
&+\frac{(a_1''Q_1(\mathbf{b})+\overline{a_1''}m\overline{kq_1'^2})q_2''+a_2''Q_2(\mathbf{b})+\mathbf{b}\cdot\overline{q_1'q_2'}\mathbf{m}}{q_1''q_2''}\label{postcrt}\text{.}
\end{align}
Note furthermore that
\begin{equation}
\label{chidecomp}\chi_{D_1}(a_1)=\chi_{D_1'}(a_1)\chi_{D_1''}(a_1)=\chi_{D_1'}(q_1'')\chi_{D_1''}(q_1')\chi_{D_1'}(a_1')\chi_{D_1''}(a_1'')\text{.}
\end{equation}
Now replacing \eqref{precrt} by \eqref{postcrt} (and using \eqref{chidecomp}) in the definition of $\widetilde{C}_{q_1,q_2,k,m}(\mathbf{m})$ \eqref{c} and summing over $a_1',a_1'',a_2',a_2''$, it follows that
$$\widetilde{C}_{q_1,q_2,k,m}(\mathbf{m})=\chi_{D_1'}(q_1'')\chi_{D_1''}(q_1')\widetilde{C}_{q_1',q_2',kq_1''^2,m}\left(\overline{q_1''q_2''}\mathbf{m}\right)\widetilde{C}_{q_1'',q_2'',kq_1'^2,m}\left(\overline{q_1'q_2'}\mathbf{m}\right)\text{.}$$
But the change of variables obtained by replacing $(a_1,a_2,\mathbf{b})$ with $(\overline{q_1''}^2a_1,\overline{q_1''}^2a_2,q_1''\mathbf{b})$ in the definition of $\widetilde{C}_{q_1',q_2',kq_1''^2,m}\left(\overline{q_1''q_2''}\mathbf{m}\right)$ shows that $\widetilde{C}_{q_1',q_2',kq_1''^2,m}\left(\overline{q_1''q_2''}\mathbf{m}\right)=\widetilde{C}_{q_1',q_2',k,m}\left(\overline{q_2''}\mathbf{m}\right)$, and similarly $\widetilde{C}_{q_1'',q_2'',kq_1'^2,m}\left(\overline{q_1'q_2'}\mathbf{m}\right)=\widetilde{C}_{q_1'',q_2'',k,m}\left(\overline{q_2'}\mathbf{m}\right)$, which finishes the proof.

\end{proof}
We move to an estimation of the sums $\widetilde{C}_{q_1,1,k,m}$, given by
$$\widetilde{C}_{q_1,1,k,m}(\mathbf{m})=\underset{a_1\, (\mathrm{mod}\,q_1)}{\left.\sum\right.^{\ast}}\underset{\substack{\mathbf{b}\, (\mathrm{mod}\,q_1)\\Q_2(\mathbf{b})\equiv0\, (\mathrm{mod}\,q_1)}}{\left.\sum\right.}\chi_{D_1}(a_1)e\left(\frac{a_1Q_1(\mathbf{b})+\overline{a_1}m\overline{k}+\mathbf{b}\cdot\mathbf{m}}{q_1}\right)\text{.}$$
We begin with the case when $q_1=p$ is a prime.
\begin{lemma}
\label{cp1}
Let $p$ denote a prime.
\begin{enumerate}[label=(\roman*)]
\item We have
$$|\widetilde{C}_{p,1,k,m}(\mathbf{m})|\ll p^{(r+1)/2}\text{.}$$
\item If $p\nmid G(\mathbf{m},-1)$, then
$$|\widetilde{C}_{p,1,k,m}(\mathbf{m})|\ll p^{r/2}\text{.}$$
\end{enumerate}
\end{lemma}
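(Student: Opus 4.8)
The plan is to bound $\widetilde{C}_{p,1,k,m}(\mathbf{m})$ by first executing the sum over $\mathbf{b}$ modulo $p$ for each fixed $a_1$, recognizing it as a quadratic exponential sum, and then exploiting cancellation in the remaining sum over $a_1$. Since $\widetilde{C}_{p,1,k,m}(\mathbf{m}) = \sum_{a_1}^\ast \chi_{D_1}(a_1) e(\overline{a_1} m \overline{k}/p) S(a_1)$, where
$$S(a_1) = \underset{\substack{\mathbf{b}\,(\mathrm{mod}\,p)\\Q_2(\mathbf{b})\equiv0\,(\mathrm{mod}\,p)}}{\sum} e\!\left(\frac{a_1 Q_1(\mathbf{b}) + \mathbf{b}\cdot\mathbf{m}}{p}\right),$$
I would first detect the condition $Q_2(\mathbf{b})\equiv0\,(\mathrm{mod}\,p)$ with an additive character, introducing an auxiliary variable $a_2\,(\mathrm{mod}\,p)$ (together with the $a_2=0$ term, which is a lower-order complete Gauss-type sum over all $\mathbf{b}$). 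One is then left with a full sum over $\mathbf{b}\in(\bZ/p\bZ)^r$ of $e((a_1 Q_1(\mathbf{b}) + a_2 Q_2(\mathbf{b}) + \mathbf{b}\cdot\mathbf{m})/p)$, i.e.\ a Gauss sum attached to the quadratic form $a_1 Q_1 + a_2 Q_2$ with a linear shift $\mathbf{b}\cdot\mathbf{m}$. For $p$ not dividing a fixed bad modulus (absorbed into the implied constant), completing the square evaluates this to $p^{r/2}$ times a Gauss sign times $e(-\,\overline{4}\,(a_1 Q_1 + a_2 Q_2)^\ast(\mathbf{m})/p)$ whenever $\det(a_1\mathbf{M}_1 + a_2\mathbf{M}_2)\not\equiv0$, and is $O(p^{(r+1)/2})$ when the form is degenerate (its value being supported on an affine subspace, with an extra $\sqrt{p}$ saving when $\mathbf{m}$ is transverse to the radical).

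For part (i), substituting the $O(p^{(r+1)/2})$ bound for the inner double sum over $(\mathbf{b}, a_2)$ uniformly in $a_1$, and summing trivially over the $p-1$ values of $a_1$ would give $p^{(r+3)/2}$, which is too weak by a factor $p$; so even for (i) one must extract cancellation from the $a_1$-sum. After completion of the square, the $a_1$-sum becomes (up to the character $\chi_{D_1}(a_1)$, the Gauss sign depending on $a_1$, and the $a_2$-sum) a sum of the shape $\sum_{a_1}^\ast e((\overline{a_1} m\overline{k} + \text{rational function of }a_1)/p)$, which after clearing denominators is a complete exponential sum in one variable of bounded degree; by Weil's bound this is $O(\sqrt{p})$, and combined with the $p^{r/2}$ from the generic $(\mathbf{b},a_2)$-evaluation — summed over the $p$ values of $a_2$, one of which is generic, the degenerate ones contributing $p\cdot p^{(r-1)/2} = p^{(r+1)/2}$ — yields the stated $p^{(r+1)/2}$.

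For part (ii), the improvement to $p^{r/2}$ under the hypothesis $p\nmid G(\mathbf{m},-1)$ must come from the observation that $G$ defines the dual variety $V_{k,m}^\ast$ of the smooth complete intersection $V_{k,m}$: the locus where the relevant quadratic pencil, twisted by $m$ and $k$, acquires a bad intersection with the hyperplane $H_\mathbf{m}$ is exactly $\{G=0\}$. Concretely, after the Gauss-sum evaluation the $(a_2,a_1)$-sum is governed by the polynomial $\det(a_1\mathbf{M}_1 + a_2\mathbf{M}_2)$ in the denominator and the dual quadratic form in the numerator; the condition $p\nmid G(\mathbf{m},-1)$ guarantees that the associated projective curve (or the exponential sum datum in the two variables $a_1,a_2$ homogenized appropriately) is nonsingular, so that Deligne's bound for the now genuinely two-variable complete exponential sum gives $p\cdot p^{1/2}$ beyond the $p^{r/2}\cdot p^{-1/2}$ typical size, for a total of $p^{r/2}$. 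The main obstacle I anticipate is making this last geometric input precise — verifying that the singular points of the character sum over $(a_1,a_2)$ are exactly controlled by the vanishing of $G(\mathbf{m},-1)$, i.e.\ translating Ein's and Aznar's description of $V_{k,m}^\ast$ as the degree-$(4r-4)$ hypersurface into a clean nondegeneracy statement for the algebraic family of Gauss sums. I would isolate this in a preliminary lemma identifying the "bad" $\mathbf{m}$ with the dual variety, and then invoke the standard square-root cancellation for nondegenerate complete sums in two variables.
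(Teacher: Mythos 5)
Your approach is genuinely different from the paper's, and it has a real gap in part (ii) that you yourself flag but do not close.

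The paper does \emph{not} detect $Q_2(\mathbf{b})\equiv0$ with an additive character and then complete the square in $\mathbf{b}$. Instead it observes that $\widetilde{C}_{p,1,k,m}(\mathbf{m})$ is invariant under the substitution $(a_1,\mathbf{b})\mapsto(a_1t^{-2},t\mathbf{b})$ for each $t\in\mathbb{F}_p^\times$, averages over $t$, and then evaluates the resulting quadratic Gau\ss{} sums first in $t$ and then in $a_1$. This produces a Legendre symbol of $Q_1(\mathbf{b})-\overline{4m}k(\mathbf{m}\cdot\mathbf{b})^2$, which is converted into a difference of point counts $\#\hat{W}_2(\mathbb{F}_p)-\#\hat{W}_3(\mathbb{F}_p)$ on explicit complete intersections in $\mathbb{P}^r$. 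Deligne's estimate (Proposition \ref{del}) then bounds this by $p^{(r+s+3)/2}$, where $s$ is the dimension of the singular locus of $W_2,W_3$; Lemma \ref{eqd} identifies $s$ with the singular locus dimension of the hyperplane section $V_{k,m}\cap H_{(\mathbf{m},-1)}$, which is $\leq0$ always (Lemma \ref{int}) and $=-1$ precisely when $(\mathbf{m},-1)\notin V_{k,m}^\ast$, i.e.\ when $p\nmid G(\mathbf{m},-1)$. This conversion of the exponential sum into a point count is what makes the Deligne input clean, and it is the step your proposal misses.

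In your version, after evaluating the Gau\ss{} sum over $\mathbf{b}$ you are left with a two-variable character sum over $(a_1,a_2)$ whose summand involves a Legendre symbol $\left(\frac{\det(a_1\mathbf{M}_1+a_2\mathbf{M}_2)}{p}\right)$ and the rational function $\overline{a_1}m\overline{k}-\overline{4}(a_1Q_1+a_2Q_2)^\ast(\mathbf{m})$, with poles along $\det(a_1\mathbf{M}_1+a_2\mathbf{M}_2)=0$ and at $a_1=0$. For part (i), fixing $a_2$ and applying a one-variable Weil bound is plausible \emph{once} one verifies nondegeneracy (the pole at $a_1=0$ coming from the $\overline{a_1}m\overline{k}$ term should do it after reducing to $p\nmid m$ as the paper does), though your bookkeeping of the degenerate locus is off — the singular values of the pencil form a curve in the $(a_1,a_2)$-plane, so for each fixed $a_2$ there are $O(1)$ bad $a_1$'s rather than some $a_2$'s being globally ``generic'' and others not. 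For part (ii), however, your argument is incomplete in an essential way: you need square-root cancellation in \emph{both} variables of this mixed-character sum, which requires a genuine nondegeneracy statement for the $\ell$-adic sheaf attached to that two-variable rational function plus multiplicative twist, and you would then have to prove that this nondegeneracy is equivalent to $p\nmid G(\mathbf{m},-1)$. This is not the same as the dual-quadratic-form computation $(a_1Q_1+a_2Q_2)^\ast$ appearing inside the exponent — the dual variety of the complete intersection $V_{k,m}$ is a different object from the dual of a single member of the pencil, and the identification you gesture at (``the singular points of the character sum over $(a_1,a_2)$ are exactly controlled by the vanishing of $G(\mathbf{m},-1)$'') is precisely the unproved content. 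The paper's route avoids this entirely: the dual variety enters only through the elementary fact that a hyperplane section of a smooth projective variety is singular iff the hyperplane lies on the dual variety, and the exponential sum never needs a two-variable Deligne input because it has been turned into a point count.

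So: part (i) can likely be salvaged along your lines with care about the degenerate Gau\ss{} sums and a verification that the one-variable rational exponential sum is nontrivial; part (ii) as written has a gap, and the missing lemma you would need (identifying the singular locus of the two-variable $(a_1,a_2)$-sum with $V_{k,m}^\ast$) is nontrivial and not supplied. The paper's $t$-averaging trick, reducing to point-counting on the varieties $W_2, W_3$, is both more elementary in its Deligne input and makes the role of the dual variety transparent via Lemma \ref{eqd} and Lemma \ref{int}.
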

Our proof will make use of the following classical consequence of Deligne's proof of the Weil conjectures \cite{Deligne}, proved in \cite{Hooley2}. Note that here and on what follows the empty variety is assumed to have dimension $-1$.
\begin{prop}
\label{del}
Let $W\subseteq\mathbb{P}_{\mathbb{F}_p}^b$ be an $e$-dimensional complete intersection of degree $d$. Denote by $s$ the dimension of the singular locus of $W$. Then we have
$$|\#W(\mathbb{F}_p)-(p^e+p^{e-1}+\cdots+1)|\ll_{d,b,s}(p^{(e+s+1)/2})\text{.}$$
\end{prop}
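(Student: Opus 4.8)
The plan is to derive the bound from Deligne's resolution of the Weil conjectures, applied once directly for smooth complete intersections and once, through a Lefschetz-pencil induction on $\dim W$, to reduce the singular case to the smooth one. Fix $\ell\ne p$; all cohomology below is $\ell$-adic \'etale cohomology of the base change $W_{\overline{\mathbb{F}}_p}$, $F$ denotes geometric Frobenius, and $M_e:=1+p+\cdots+p^e$ is the claimed main term. Since $W$ is proper, the Grothendieck--Lefschetz trace formula gives $\#W(\mathbb{F}_p)=\sum_{k=0}^{2e}(-1)^k\Tr(F\mid H^k(W))$. If $p$ is bounded in terms of $d$ and $b$ the assertion is trivial, since then $|\#W(\mathbb{F}_p)-M_e|\le\#\mathbb{P}^b(\mathbb{F}_p)+M_e\ll_b p^b\ll_{d,b}1\le p^{(e+s+1)/2}$; so we may assume $p$ large. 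The defining forms have degrees $d_1,\dots,d_c$ with $c=b-e\le b$ and each $d_i\le d$, so any dependence on these data is subsumed in dependence on $d$ and $b$.

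\emph{Base case $s=-1$} (together with the trivial case $\dim W\le 0$, where $W$ has at most $d$ points and the bound is immediate). The weak Lefschetz theorem, valid for any complete intersection, gives $H^k(W)=H^k(\mathbb{P}^b)$ for $k<e$; when moreover $W$ is smooth, Poincar\'e duality then identifies $H^k(W)$ with the Tate class $\mathbb{Q}_\ell(-k/2)$ for every even $k$ with $0\le k\le 2e$ and $k\ne e$, and forces $H^k(W)=0$ for all remaining $k\ne e$, while $H^e(W)$ has dimension $O_{d,b}(1)$ — the middle Betti number of a smooth complete intersection being an explicit polynomial in the degrees — and is pure of weight $e$ by Deligne's theorem. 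Feeding this into the trace formula and summing the geometric series coming from the Tate classes yields $\#W(\mathbb{F}_p)=M_e+O_{d,b}(p^{e/2})$, the correction when $e$ is even being absorbed by the Tate class inside $H^e(W)$.

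\emph{Inductive step.} Let $e=\dim W\ge 1$, $s\ge 0$, and assume the proposition for all complete intersections of dimension $<e$. Choose, defined over $\mathbb{F}_p$, a ``good'' pencil of hyperplanes $\{H_t\}_{t\in\mathbb{P}^1}$ with axis $A=\bigcap_t H_t$ of codimension $2$: good meaning that $\{H_t\}$ is a Lefschetz pencil for the smooth locus $W^{\mathrm{sm}}$, that no $H_t$ contains a component of $W$ or a positive-dimensional component of $\mathrm{Sing}(W)$, and that $W\cap A$ is a complete intersection of dimension $e-2$ with $\dim\mathrm{Sing}(W\cap A)\le s$. Good pencils form a non-empty Zariski-open subset of the Grassmannian of lines, so such a pencil exists once $p\gg_{d,b}1$. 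Because the $H_t$ cover $\mathbb{P}^b$ and meet pairwise precisely along $A$, one has the elementary identity
$$\#W(\mathbb{F}_p)=\sum_{t\in\mathbb{P}^1(\mathbb{F}_p)}\#(W\cap H_t)(\mathbb{F}_p)-p\cdot\#(W\cap A)(\mathbb{F}_p)\text{.}$$
Each $W\cap H_t$ is a complete intersection of dimension $e-1$ in $H_t\cong\mathbb{P}^{b-1}$ whose singular-locus dimension $s_t$ satisfies, thanks to the genericity of the pencil, $s_t\le s-1$ for every $t$ when $s\ge 1$; and when $s=0$ one has $s_t=0$ for only $O_{d,b}(1)$ values of $t$ (the tangency parameters of the Lefschetz pencil and the finitely many $t$ with $H_t\cap\mathrm{Sing}(W)\ne\emptyset$), while $s_t=-1$ for all other $t$. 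Applying the inductive hypothesis to every $W\cap H_t$ and to $W\cap A$, the main terms combine through the identity $(p+1)M_{e-1}-pM_{e-2}=M_e$, and the remaining error is bounded by
$$\sum_{t\in\mathbb{P}^1(\mathbb{F}_p)}O_{d,b,s}\bigl(p^{(e+s_t)/2}\bigr)+p\cdot O_{d,b,s}\bigl(p^{(e+s-1)/2}\bigr)\ll_{d,b,s}p^{(e+s+1)/2}\text{,}$$
estimating the first sum by $(p+1)\,p^{(e+s-1)/2}$ when $s\ge 1$ and by $O_{d,b,s}(1)\cdot p^{e/2}+(p+1)\,p^{(e-1)/2}$ when $s=0$. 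This closes the induction.

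The crux is the inductive step, and within it the construction of a single pencil, rational over $\mathbb{F}_p$, all of whose $\mathbb{F}_p$-rational members (and the axis) have simultaneously controlled singular loci: one must invoke the existence of $\mathbb{F}_p$-rational Lefschetz pencils once $p$ is large, the dimension counts preventing any fibre from acquiring an unexpectedly large singular locus, and the observation that ``good'' being a non-empty open condition forces an $\mathbb{F}_p$-point of the parameter space into it for all large $p$. The smooth base case is a routine application of Deligne's bounds. An alternative is to input directly a structure theorem for the cohomology of a possibly singular complete intersection — namely that $H^k(W)$ agrees with $H^k(\mathbb{P}^b)$ outside a band of width $O(s)$ around $k=e$, the remaining classes having weight $\le e+s$ — and to read the estimate off the trace formula; but establishing that theorem itself proceeds along the same inductive lines.
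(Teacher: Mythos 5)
The paper does not actually prove this proposition: it is quoted from Hooley \cite{Hooley2} as a known consequence of Deligne's theorem, so there is no internal argument to compare against. What you have written is, in substance, a reconstruction of the standard proof of Hooley's theorem: Deligne plus weak Lefschetz and Poincar\'e duality for the smooth case $s=-1$, and a reduction of the singular case to lower dimension by slicing with an $\mathbb{F}_p$-rational pencil, the telescoping identity $(p+1)M_{e-1}-pM_{e-2}=M_e$ reassembling the main term. The skeleton is sound: the point-counting identity for the pencil is correct, the bookkeeping of the $s_t$ and of the error terms checks out (including the degenerate cases $e\le1$), and the base case is a routine application of the trace formula. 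Two places deserve more care than you give them. First, ``good pencils form a non-empty open subset, hence contain an $\mathbb{F}_p$-point once $p\gg_{d,b}1$'' requires the degree of the complement of the good locus in the Grassmannian to be bounded in terms of $d$ and $b$ \emph{uniformly in $W$} (effective degree bounds for $\mathrm{Sing}(W)$, for the relevant incidence/tangency loci, and for the locus of pencils containing a component of $W$ or of $\mathrm{Sing}(W)$); this is true but it is precisely the technical heart of the induction, and you assert it rather than prove it. Second, you should not literally invoke Lefschetz pencils: in characteristic $p$ a Lefschetz pencil need not exist for the given embedding (one may have to pass to a Veronese re-embedding). Fortunately you never use ordinary double points; all you need is that for a generic pencil the incidence set $\{(x,t):x\in W^{\mathrm{sm}},\ T_xW\subseteq H_t\}$ is finite, which holds in any characteristic by a dimension count on the conormal variety, and with that substitution your claims $s_t\le s-1$ for $s\ge1$ and ``$s_t=0$ for only $O_{d,b}(1)$ values of $t$'' when $s=0$ are correct. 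For comparison, Hooley's published argument controls the exceptional hyperplane sections via Zak's theorem from the appendix of \cite{Hooley2} (the same result this paper uses in Lemma \ref{int}) rather than via a genericity analysis of a single pencil, but the two devices play the same role.
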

Note that if $\hat{W}$ denotes the affine cone over $W$, then we have
$$\#W(\mathbb{F}_p)=\frac{\#\hat{W}(\mathbb{F}_p)-1}{p-1}$$
and the above estimate yields
$$\#\hat{W}(\mathbb{F}_p)=p^{e+1}+O_{d,b}(p^{(e+s+3)/2})\text{.}$$
We will also need the following geometric observation.
\begin{lemma}
\label{int}
The variety $W$ in $\mathbb{P}_{\mathbb{F}_p}^{r-1}$ defined by the equations
$$\begin{cases}
&4mQ_1(\mathbf{x})-k(\mathbf{m}\cdot\mathbf{x})^2=0\\
&Q_2(\mathbf{x})=0
\end{cases}
$$
has singular locus of dimension at most $0$ (i.e. $W$ has at most isolated singularities).
\end{lemma}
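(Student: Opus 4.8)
The plan is to convert the condition ``$[\mathbf{x}]\in W$ is singular'' into a statement about the pencil of quadrics spanned by $Q_2$ and $Q_1':=4mQ_1-k(\mathbf{m}\cdot\mathbf{x})^2$, and then to feed in the hypothesis that $V=V(Q_1,Q_2)$ is smooth. I work over $\overline{\mathbb{F}_p}$. The argument will need $p$ to avoid a finite bad set (it is enough that $p\nmid 4m$ and that $V$ stays smooth modulo $p$), but this is harmless in the application: the lemma is only invoked for $q_1=p$ with $B^{1-\varepsilon}\le p\ll B$, a range disjoint from the divisors of $4m$ and from the bad primes of $V$, since $m<B^\varepsilon$ and $V$ is fixed. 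Write $M_1,M_2$ for the symmetric matrices of $Q_1,Q_2$ and $N=4mM_1-k\mathbf{m}\mathbf{m}^{\top}$ for that of $Q_1'$. Unless $Q_1'$ is a scalar multiple of $Q_2$ (in which case $W$ is the smooth quadric $\{Q_2=0\}$ and we are done), $Q_1'$ and the irreducible form $Q_2$ have no common factor, so $W$ is a complete intersection and $[\mathbf{x}]\in W$ is singular exactly when $\nabla Q_1'(\mathbf{x})$ and $\nabla Q_2(\mathbf{x})$ are linearly dependent. As $Q_2$ is non-singular, $M_2\mathbf{x}\ne 0$ for $\mathbf{x}\ne 0$, so this forces $N\mathbf{x}=\mu M_2\mathbf{x}$ for a unique scalar $\mu$, and then $Q_1'(\mathbf{x})=\mu Q_2(\mathbf{x})=0$ automatically; hence
$$\mathrm{Sing}(W)\ \subseteq\ \bigcup_{\mu:\ \det(N-\mu M_2)=0}\ \mathbb{P}\bigl(\ker(N-\mu M_2)\bigr)\cap\{Q_2=0\}.$$
The union runs over at most $r$ values of $\mu$, because $\det(N-\mu M_2)$ has degree exactly $r$ in $\mu$ (leading coefficient $(-1)^r\det M_2\ne 0$).

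Next I would bound, for each such $\mu$, the dimension $d_\mu:=\dim\ker(N-\mu M_2)$ and the way $\{Q_2=0\}$ cuts this kernel. Put $A_\mu:=4mM_1-\mu M_2$, which (as $p\nmid 4m$) is a nonzero multiple of a member of the pencil $\langle M_1,M_2\rangle$; then $N-\mu M_2=A_\mu-k\mathbf{m}\mathbf{m}^{\top}$ is a symmetric rank-$\le 1$ perturbation of $A_\mu$, so $\rank(N-\mu M_2)\ge \rank(A_\mu)-1$, i.e. $d_\mu\le \mathrm{corank}(A_\mu)+1$. Since $V$ is smooth, every member of $\langle Q_1,Q_2\rangle$ has corank at most $1$ (a standard fact about non-degenerate pencils of quadrics, which also drops out of the computation in the next paragraph applied to $\langle Q_1,Q_2\rangle$ itself), whence $d_\mu\le 2$ for every $\mu$. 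If $d_\mu\le 1$, then $\mathbb{P}(\ker(N-\mu M_2))$ is at most a point and contributes nothing of positive dimension; if $d_\mu=2$ and $Q_2$ does not vanish identically on $\ker(N-\mu M_2)$, then $\mathbb{P}(\ker(N-\mu M_2))\cap\{Q_2=0\}$ is a proper quadric inside a $\mathbb{P}^1$ and so has dimension $\le 0$. Summing over the finitely many $\mu$, the lemma reduces to excluding the single case $d_\mu=2$ with $Q_2$ identically zero on $\ker(N-\mu M_2)$.

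The hard part is precisely this exclusion, and it is where smoothness of $V$ is used. Suppose $K:=\ker(N-\mu M_2)$ is two-dimensional and totally isotropic for $Q_2$; since $p$ is odd, the bilinear form attached to $M_2$ then vanishes on $K$. For $\mathbf{x},\mathbf{y}\in K$ we get $\mathbf{y}^{\top}N\mathbf{x}=\mu\,\mathbf{y}^{\top}M_2\mathbf{x}=0$, and expanding $N=4mM_1-k\mathbf{m}\mathbf{m}^{\top}$ gives $4m\,\mathbf{y}^{\top}M_1\mathbf{x}=k(\mathbf{m}\cdot\mathbf{x})(\mathbf{m}\cdot\mathbf{y})$ on $K$; in particular $4mQ_1(\mathbf{x})=k(\mathbf{m}\cdot\mathbf{x})^2$ for all $\mathbf{x}\in K$. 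Choose $[\mathbf{x}_0]\in\mathbb{P}(K)$ lying on $\{\mathbf{m}\cdot\mathbf{x}=0\}$ (a nonempty condition on a $\mathbb{P}^1$): then $Q_1(\mathbf{x}_0)=Q_2(\mathbf{x}_0)=0$, so $[\mathbf{x}_0]\in V$, whereas $N\mathbf{x}_0=4mM_1\mathbf{x}_0$ together with $N\mathbf{x}_0=\mu M_2\mathbf{x}_0$ gives $A_\mu\mathbf{x}_0=0$, so $\nabla Q_1(\mathbf{x}_0)$ and $\nabla Q_2(\mathbf{x}_0)$ are proportional (and nonzero). Thus $[\mathbf{x}_0]$ is a singular point of $V$, contradicting its smoothness. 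This rules out the offending case, so $\mathrm{Sing}(W)$ is a finite union of sets of dimension at most $0$, and therefore $\dim\mathrm{Sing}(W)\le 0$. (The parenthetical claim used above is the same computation run on $\langle Q_1,Q_2\rangle$: a corank-$\ge 2$ member would have a two-dimensional subspace $P$ of its kernel, and restricting $Q_1,Q_2$ to $\mathbb{P}(P)\cong\mathbb{P}^1$ and taking a common zero — which exists over $\overline{\mathbb{F}_p}$ — produces a singular point of $V$ exactly as above.)
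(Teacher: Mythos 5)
Your proof is correct, but it takes a genuinely different route from the paper's. The paper identifies $W$ with the hyperplane section of $V_{k,m}\subseteq\mathbb{P}^r$ by $\{\mathbf{m}\cdot\mathbf{x}-x=0\}$, notes that $V_{k,m}$ is smooth by Lemma \ref{eqd}, and then invokes a theorem of Zak (quoted from the appendix to Hooley's paper) that a hyperplane section of a complete intersection can raise the dimension of the singular locus by at most one; this finishes the proof in two lines. Your argument instead stays in $\mathbb{P}^{r-1}$ and is entirely linear-algebraic: you describe $\mathrm{Sing}(W)$ through the generalized eigenvalue problem $N\mathbf{x}=\mu M_2\mathbf{x}$ with $N$ the matrix of $4mQ_1-k(\mathbf{m}\cdot\mathbf{x})^2$, bound $\dim\ker(N-\mu M_2)$ by $\mathrm{corank}(4mM_1-\mu M_2)+1$ via the rank-one perturbation $N-\mu M_2=(4mM_1-\mu M_2)-k\mathbf{m}\mathbf{m}^{\top}$, feed in the fact that smoothness of $V$ forces every member of $\langle Q_1,Q_2\rangle$ to have corank $\le 1$, and finally rule out the borderline case (a two-dimensional, $Q_2$-isotropic kernel) by locating a singular point of $V$ on $\{\mathbf{m}\cdot\mathbf{x}=0\}$. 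The tradeoff is that the paper's proof delegates the heavy lifting to Zak's theorem while yours is longer but self-contained and makes the structure of $\mathrm{Sing}(W)$ completely explicit. One expository gap worth closing: in the parenthetical corank-$\le 1$ argument you invoke the existence of a common zero of $Q_1|_P$ and $Q_2|_P$ over $\overline{\mathbb{F}_p}$ without justification; what makes it work is that $A_\mu\mathbf{x}=0$ on $P$ forces $4mQ_1|_P=\mu Q_2|_P$, so any zero of $Q_2|_P$ (which exists over $\overline{\mathbb{F}_p}$) is automatically a common zero — the same proportionality mechanism as in the main case, and you should state it rather than refer to it as ``exactly as above.''
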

\begin{proof}
The variety $W$ is above is isomorphic to the subvariety $W'$ of $\mathbb{P}_{\mathbb{F}_p}^r$ defined in homogeneous coordinates $\mathbf{x},x$ by the equations
$$\begin{cases}
&4mQ_1(\mathbf{x})-kx^2=0\\
&Q_2(\mathbf{x})=0\\
&\mathbf{m}\cdot\mathbf{x}-x=0\text{.}
\end{cases}$$
This is just a hyperplane section of $V_{k,m}$ (which, we recall, is smooth by Lemma \ref{eqd}). By a result of Zak (\cite{Hooley2}, appendix, Theorem 2), which states that intersecting a complete intersection in projective space with a hyperplane increases the dimension of singular locus of the former by at most $1$, it follows that $W$ has singular locus of dimension at most $0$, as desired.
\end{proof}
\begin{proof}[Proof of Lemma \ref{cp1}]
We can (and will) assume that $p$ is large enough that the relevant geometric properties of $V$ also hold for its reduction modulo $p$, and also that $p>D$, for otherwise the result follows from the trivial bound $\widetilde{C}_{p,1,k,m}\ll p^{n+1}$. We also assume that $p\nmid m$, for otherwise the result follows from Lemma 6 in \cite{Munshi}. In particular we may assume $\chi_{D_1}=1$. Let $c$ be any non-square in $\mathbb{F}_p^\times$. We will consider the three projective varieties $W_1,W_2,W_3$ defined according to the table below.

\begin{table}[htbp]
\caption{Varieties used in the proof of Lemma \ref{cp1}}
\begin{tabular}{|c|c|c|}
\hline
Variety & Ambient projective space & Equations \\
\hline
$W_1$ & $\mathbb{P}_{\mathbb{F}_p}^{r-1}$ & $Q_2(\mathbf{x})=0$\\
$W_2$ & $\mathbb{P}_{\mathbb{F}_p}^r$ & $4mQ_1(\mathbf{x})-k(\mathbf{m}\cdot\mathbf{x})^2-x^2=Q_2(\mathbf{x})=0$\\
$W_3$ & $\mathbb{P}_{\mathbb{F}_p}^r$ & $4mQ_1(\mathbf{x})-k(\mathbf{m}\cdot\mathbf{x})^2-cx^2=Q_2(\mathbf{x})=0$\\
\hline
\end{tabular}
\end{table}

Note that all three varieties have dimension $r-2$. Moreover, by Lemma \ref{eqd}, the singular locus of either $W_2$ or $W_3$ has the same dimension $s$ as that of the variety defined by
\begin{equation}\label{newv}\begin{cases}
&4mQ_1(\mathbf{x})-k(\mathbf{m}\cdot\mathbf{x})^2=0\\
&Q_2(\mathbf{x})=0\text{.}
\end{cases}
\end{equation}
Hence, by Lemma \ref{int}, both varieties $W_2$ and $W_3$ have singular locus of dimension $s\leq0$, with equality if and only if the variety defined by \eqref{newv} is singular.

For any $t\in\mathbb{F}_p^\times$, the variable change from $a_1$ to $a_1t^{-2}$ and from $\mathbf{b}$ to $t\mathbf{b}$ shows that
$$\widetilde{C}_{p,1,k,m}(\mathbf{m})=\underset{a_1\in\mathbb{F}_p^\times}{\left.\sum\right.}\underset{\substack{\mathbf{b}\in\mathbb{F}_p^r\\Q_2(\mathbf{b})=0}}{\left.\sum\right.}e\left(\frac{a_1Q_1(\mathbf{b})+\overline{a_1}m\overline{k}t^2+t\mathbf{b}\cdot\mathbf{m}}{p}\right)\text{.}$$
Summing over all $t\in\mathbb{F}_p^\times$ yields
\begin{align}(p-1)\widetilde{C}_{p,1,k,m}(\mathbf{m})&=\sum_{t\in\mathbb{F}_p^\times}\underset{a_1\in\mathbb{F}_p^\times}{\left.\sum\right.}\underset{\substack{\mathbf{b}\in\mathbb{F}_p^r\\Q_2(\mathbf{b})=0}}{\left.\sum\right.}e\left(\frac{a_1Q_1(\mathbf{b})+\overline{a_1}m\overline{k}t^2+t\mathbf{b}\cdot\mathbf{m}}{p}\right)\nonumber\\
&=\sum_{a_1\in\mathbb{F}_p^\times}\sum_{t\in\mathbb{F}_p}\underset{\substack{\mathbf{b}\in\mathbb{F}_p^r\\Q_2(\mathbf{b})=0}}{\left.\sum\right.}e\left(\frac{a_1Q_1(\mathbf{b})+\overline{a_1}m\overline{k}t^2+t\mathbf{b}\cdot\mathbf{m}}{p}\right)\label{tric}\\
&-\sum_{a_1\in\mathbb{F}_p^\times}\sum_{\substack{\mathbf{b}\in\mathbb{F}_p^r\\Q_2(\mathbf{b})=0}}e\left(\frac{a_1Q_1(\mathbf{b})}{p}\right)\nonumber\text{.}
\end{align}
The last sum can be evaluated easily. Indeed, since for $j\in\mathbb{F}_p$
$$\sum_{a_1\in\mathbb{F}_p^\times}e\left(\frac{a_1j}{p}\right)=\begin{cases}-1&\text{ if }j\neq0\\
p-1&\text{ if }j=0\text{,}\end{cases}$$
it follows that
\begin{align*}
&\sum_{a_1\in\mathbb{F}_p^\times}\sum_{\substack{\mathbf{b}\in\mathbb{F}_p^r\\Q_2(\mathbf{b})=0}}e\left(\frac{a_1Q_1(\mathbf{b})}{p}\right)\\&=p\#\{\mathbf{b}\in\mathbb{F}_p^r:Q_1(\mathbf{b})=Q_2(\mathbf{b})=0\}-\#\{\mathbf{b}\in\mathbb{F}_p^r:Q_2(\mathbf{b})=0\}\\
&=p\#\hat{V}(\mathbb{F}_p)-\#\hat{W}_1(\mathbb{F}_p)
\end{align*}
and using the remark following Proposition \ref{del} yields
\begin{align}
&\sum_{a_1\in\mathbb{F}_p^\times}\sum_{\substack{\mathbf{b}\in\mathbb{F}_p^r\nonumber\\Q_2(\mathbf{b})=0}}e\left(\frac{a_1Q_1(\mathbf{b})}{p}\right)\\&=p(p^{r-2}+O(p^{(r-1)/2}))-(p^{r-1}+O(p^{r/2}))\nonumber\\
&=O(p^{(r+1)/2})\label{ep}\text{.}
\end{align}
It remains to estimate $\sum_{a_1\in\mathbb{F}_p^\times}S_{a_1}$,
where, for $a_1\in\mathbb{F}_p^\times$, we define
$$S_{a_1}=\sum_{t\in\mathbb{F}_p}\underset{\substack{\mathbf{b}\in\mathbb{F}_p^r\\Q_2(\mathbf{b})=0}}{\left.\sum\right.}e\left(\frac{a_1Q_1(\mathbf{b})+\overline{a_1}m\overline{k}t^2+t\mathbf{b}\cdot\mathbf{m}}{p}\right)\text{.}$$
We can write
$$S_{a_1}=\underset{\substack{\mathbf{b}\in\mathbb{F}_p^r\\Q_2(\mathbf{b})=0}}{\left.\sum\right.}\sum_{t\in\mathbb{F}_p}e\left(\frac{a_1Q_1(\mathbf{b})+\overline{a_1}m\overline{k}(t+\overline{2m}a_1k\mathbf{m}\cdot\mathbf{b})^2-a_1\overline{4m}k(\mathbf{m}\cdot\mathbf{b})^2}{p}\right)$$
and making the variable change from $t$ to $t+\overline{2m}a_1k\mathbf{m}\cdot\mathbf{b}$ yields
$$S_{a_1}=\underset{\substack{\mathbf{b}\in\mathbb{F}_p^r\\Q_2(\mathbf{b})=0}}{\left.\sum\right.}e\left(\frac{a_1Q_1(\mathbf{b})-a_1\overline{4m}k(\mathbf{m}\cdot\mathbf{b})^2}{p}\right)\sum_{t\in\mathbb{F}_p}e\left(\frac{\overline{a_1}m\overline{k}t^2}{p}\right)\text{.}$$
The inner sum above is a Gau{\ss} sum and can be evaluated explicitly. Indeed, it is a standard fact (see for example \cite{IK}, Theorem 3.3) that for $a\in\mathbb{F}_p$ one has
\begin{equation}\label{gaus}\sum_{t\in\mathbb{F}_p}e\left(\frac{at^2}{p}\right)=\sum_{t\in\mathbb{F}_p^\times}\left(\frac{t}{p}\right)e\left(\frac{t}{p}\right)=\varepsilon_p\left(\frac{a}{p}\right)\sqrt{p}
\end{equation}
where
$$\varepsilon_p=\begin{cases}1&\text{ if }p\equiv1\,(\mathrm{mod}\,4)\\
i&\text{ if }p\equiv-1\,(\mathrm{mod}\,4)\text{.}$$
\end{cases}$$
(Here $\left(\frac{\cdot}{p}\right)$ denotes the Legendre symbol.) It follows that
$$\sum_{a_1\in\mathbb{F}_p^\times}S_{a_1}=\left(\frac{mk}{p}\right)\varepsilon_pp^{1/2}\sum_{\substack{\mathbf{b}\in\mathbb{F}_p^r\\Q_2(\mathbf{b})=0}}\sum_{a_1\in\mathbb{F}_p}\left(\frac{a_1}{p}\right)e\left(\frac{a_1Q_1(\mathbf{b})-a_1\overline{4m}k(\mathbf{m}\cdot\mathbf{b})^2}{p}\right)\text{,}$$
and now the inner sum is again a Gau{\ss} sum which can be evaluated using \eqref{gaus}, which results in
$$\sum_{a_1\in\mathbb{F}_p^\times}S_{a_1}=(-1)^{(p-1)/2}\left(\frac{mk}{p}\right)p\sum_{\substack{\mathbf{b}\in\mathbb{F}_p^r\\Q_2(\mathbf{b})=0}}\left(\frac{Q_1(\mathbf{b})-\overline{4m}k(\mathbf{m}\cdot\mathbf{b})^2}{p}\right)\text{,}$$
and, taking into account that the non-squares in $\mathbb{F}_p^\times$ are precisely the elements of the form $cx^2$ for $x\in\mathbb{F}_p^\times$, we obtain
$$\sum_{a_1\in\mathbb{F}_p^\times}S_{a_1}=(-1)^{(p-1)/2}\left(\frac{mk}{p}\right)p(\#\hat{W}_2(\mathbb{F}_p)-\#\hat{W}_3(\mathbb{F}_p))\text{.}$$
By the remark following Proposition \ref{del} it follows that
$\#\hat{W}_i(\mathbb{F}_p)=p^{r-1}+O(p^{(r+s+1)/2})$
for $i\in\{2,3\}$, and hence
\begin{equation}
\label{hp}
\left|\sum_{a_1\in\mathbb{F}_p^\times}S_{a_1}\right|\ll p^{(r+s+3)/2}\text{.}
\end{equation}
Using now \eqref{ep} and \eqref{hp} in \eqref{tric} yields
\begin{equation}\label{ups}(p-1)\widetilde{C}_{p,1,k,m}(\mathbf{m})\ll p^{(r+s+3)/2}\text{,}\quad\text{i.e.}\quad\widetilde{C}_{p,1,k,m}(\mathbf{m})\ll p^{(r+s+1)/2}\text{.}
\end{equation}
In particular, by the remarks at the beginning of the proof, we have $s\leq0$ and we have $\widetilde{C}_{p,1,k,m}(\mathbf{m)}\ll p^{(r+1)/2}$, yielding (i). For (ii), we recall that the assumption that $s=0$ implies that the variety defined by \eqref{newv} is singular. Recalling the isomorphism in the proof of Lemma \ref{int}, we see that this singularity condition is equivalent to the statement that the hyperplane with equation $\mathbf{m}\cdot\mathbf{x}-x=0$ in $\mathbb{P}_{\mathbb{F}_p}^r$ has a singular intersection with $V_{k,m}$. By definition of the dual variety $V_{k,m}^\ast$, this does not happen if $p\nmid G(\mathbf{m},-1)$. In that case we have $s=-1$ and \eqref{ups} yields (ii).
\end{proof}

The following lemma deals with the case of higher prime powers.
\begin{lemma}
\label{cpc1}
Let $p$ be a prime not dividing $D$ or $m$, and let $c\geq2$. If $p\nmid G(\mathbf{m},-1)$, then
$$\widetilde{C}_{p^c,1,k,m}(\mathbf{m})=0\text{.}$$
\end{lemma}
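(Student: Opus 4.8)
The plan is to prove the vanishing of $\widetilde{C}_{p^c,1,k,m}(\mathbf{m})$ for $c\ge 2$ by exhibiting a ``diagonal'' structure in the exponential sum that forces complete cancellation once $p\nmid G(\mathbf{m},-1)$, much as one shows that Salié-type or stationary-phase sums vanish at higher prime powers away from the ``critical locus''. First I would recall the definition
$$\widetilde{C}_{p^c,1,k,m}(\mathbf{m})=\underset{a_1\,(\mathrm{mod}\,p^c)}{\left.\sum\right.^\ast}\underset{\substack{\mathbf{b}\,(\mathrm{mod}\,p^c)\\Q_2(\mathbf{b})\equiv0\,(\mathrm{mod}\,p^c)}}{\left.\sum\right.}e\left(\frac{a_1Q_1(\mathbf{b})+\overline{a_1}m\overline{k}+\mathbf{b}\cdot\mathbf{m}}{p^c}\right)\text{,}$$
noting that since $p\nmid D$ we have $\chi_{D_1}=1$ here. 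The strategy is the classical ``lift and differentiate'' argument: write $c=c_1+c_2$ with $c_1=\lceil c/2\rceil$, and substitute $\mathbf{b}\mapsto\mathbf{b}+p^{c_2}\mathbf{v}$ with $\mathbf{v}$ running modulo $p^{c_1}$ and $a_1\mapsto a_1+p^{c_2}u$ with $u$ running modulo $p^{c_1}$, exploiting the fact that the phase is a polynomial so that its Taylor expansion in $p^{c_2}$ truncates. The contribution of the $\mathbf{v}$-sum and the $u$-sum then collapses to a congruence condition coming from the vanishing of the relevant partial derivatives modulo $p^{c_1}$.

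Concretely, expanding the phase $\Phi(a_1,\mathbf{b})=a_1Q_1(\mathbf{b})+\overline{a_1}m\overline{k}+\mathbf{b}\cdot\mathbf{m}$ to first order and using that $Q_1$ and $Q_2$ are quadratic, the $\mathbf{v}$-summation produces the constraints $a_1\nabla Q_1(\mathbf{b})+\mathbf{m}\equiv\bzero$ and $\nabla Q_2(\mathbf{b})\equiv\bzero$ modulo $p^{c_1}$ (the latter coming from the divisibility condition $Q_2(\mathbf{b})\equiv0\,(\mathrm{mod}\,p^c)$ after the shift, together with the Lagrange-type condition produced by summing over the hidden $a_2$ variable — though here $a_2$ is absent since $q_2=1$, so the condition $Q_2(\mathbf{b})\equiv0$ persists and its derivative governs the shift), while the $u$-summation produces $Q_1(\mathbf{b})-\overline{a_1}^2m\overline{k}\equiv0$ modulo $p^{c_1}$. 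Thus the sum reduces to a count of solutions $(a_1,\mathbf{b})$ modulo $p^{c_1}$ of the system
$$a_1\nabla Q_1(\mathbf{b})+\mathbf{m}\equiv\bzero,\quad \nabla Q_2(\mathbf{b})\equiv\bzero,\quad Q_2(\mathbf{b})\equiv0,\quad a_1^2Q_1(\mathbf{b})\equiv m\overline{k}\,(\mathrm{mod}\,p^{c_1})\text{,}$$
weighted by a Gauss-sum factor of size a power of $p$. The point is then to show that this system is \emph{empty} modulo $p$ under the hypothesis $p\nmid G(\mathbf{m},-1)$: a solution modulo $p$ would give a point $\mathbf{b}\in\mathbb{P}^{r-1}_{\mathbb{F}_p}$ lying on $Q_2=0$ with $\nabla Q_2(\mathbf{b})$ proportional to... no: $\nabla Q_2(\mathbf{b})\equiv\bzero$ together with $Q_2$ smooth forces $\mathbf{b}\equiv\bzero$ unless we are careful — instead the correct reduction keeps $a_2$-type information, so the surviving condition is that the hyperplane $\mathbf{m}\cdot\mathbf{x}=0$ meets $V_{k,m}$ non-transversally, i.e. $\mathbf{m}$ lies on the dual variety $V_{k,m}^\ast$, i.e. $p\mid G(\mathbf{m},-1)$. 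This is exactly the geometric input already assembled in the excerpt (Lemma \ref{eqd}, Lemma \ref{int}, and the identification of $V_{k,m}^\ast$ with $\{G=0\}$), so invoking it closes the argument: the solution set modulo $p$ is empty, hence the solution set modulo $p^{c_1}$ is empty by a trivial lifting obstruction at the base, hence $\widetilde{C}_{p^c,1,k,m}(\mathbf{m})=0$.

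The main obstacle I anticipate is bookkeeping the interaction between the congruence $Q_2(\mathbf{b})\equiv0\,(\mathrm{mod}\,p^c)$ and the shift $\mathbf{b}\mapsto\mathbf{b}+p^{c_2}\mathbf{v}$: one must check that after the shift the condition becomes $Q_2(\mathbf{b})+p^{c_2}\nabla Q_2(\mathbf{b})\cdot\mathbf{v}\equiv0\,(\mathrm{mod}\,p^c)$, extract the linear-in-$\mathbf{v}$ constraint correctly, and confirm that it produces precisely the condition forcing $\mathbf{b}$ (mod $p$) into the bad locus of the hyperplane section of $V_{k,m}$ rather than into the singular locus of $Q_2$ alone. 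A related subtlety is the case $c$ odd versus even, where $c_1\neq c_2$ and one must verify that the truncated Taylor expansion is genuinely exact (it is, since $\Phi$ has degree $2$ in $\mathbf{b}$ and the terms of order $\ge 2$ in $p^{c_2}$ carry a factor $p^{2c_2}\ge p^c$). Once these are handled, the geometric vanishing criterion is immediate from the results already in the paper, so the proof is short modulo this careful expansion; I would present it by first doing the change of variables, then isolating the linear constraints, then invoking the dual-variety description to see the constraint system has no $\mathbb{F}_p$-point.
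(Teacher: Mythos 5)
Your high-level plan — a $p$-adic stationary phase argument: split $c=c_1+c_2$, shift $\mathbf{b}\mapsto\mathbf{b}+p^{c_2}\mathbf{v}$ and $a_1\mapsto a_1+p^{c_2}u$, extract a ``critical'' constraint system and show it has no $\mathbb{F}_p$-point under $p\nmid G(\mathbf{m},-1)$ — is a legitimate route and is a genuinely different route from the paper's (the paper averages over dilations $t\in(\mathbb{Z}/p^c\mathbb{Z})^\times$, evaluates Gau{\ss} and Ramanujan sums, and obtains exact cancellation from Hensel lifting on smooth punctured cones). But as written there are concrete gaps that prevent the argument from going through.

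First, the truncation claim is wrong for odd $c$. You set $c_2=\lfloor c/2\rfloor$, so for $c$ odd one has $2c_2=c-1<c$; the quadratic terms $p^{2c_2}Q_1(\mathbf{v})$ and the second-order term in the expansion of $\overline{a_1+p^{c_2}u}$ survive modulo $p^c$ and contribute a nontrivial quadratic (Gau{\ss}-sum) phase in $\mathbf{v}$ and $u$. You assert ``$p^{2c_2}\geq p^c$'' to dismiss them, but this inequality fails whenever $c$ is odd. The paper's proof handles this by explicitly evaluating the resulting Gau{\ss} sums, which introduce Legendre-symbol twists and require a separate treatment; your proposal has no analogue of this.

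Second, and more seriously, the critical system you extract is not the right one. You write $\nabla Q_2(\mathbf{b})\equiv\mathbf{0}\pmod{p^{c_1}}$ as one of the constraints. Since $Q_2$ is nonsingular, $\nabla Q_2(\mathbf{b})\equiv\mathbf{0}\pmod p$ forces $\mathbf{b}\equiv\mathbf{0}\pmod p$, and then $Q_1(\mathbf{b})\equiv 0$ contradicts $a_1^2Q_1(\mathbf{b})\equiv m\overline{k}$ with $p\nmid mk$. So the system you wrote down is trivially empty — without ever invoking $p\nmid G(\mathbf{m},-1)$. This is a sign the reduction was done incorrectly: the congruence $Q_2(\mathbf{b})\equiv 0\pmod{p^c}$ is a \emph{constraint on the summation domain}, not an additive phase, so after the shift it becomes $Q_2(\mathbf{b})+p^{c_2}\nabla Q_2(\mathbf{b})\cdot\mathbf{v}+\cdots\equiv 0\pmod{p^c}$, which restricts $\mathbf{v}$ to an affine subspace when $\nabla Q_2(\mathbf{b})\not\equiv\mathbf{0}\pmod p$ rather than forcing $\nabla Q_2(\mathbf{b})\equiv\mathbf{0}$. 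The correct critical condition should involve a Lagrange multiplier of the form $a_1\nabla Q_1(\mathbf{b})+a_2\nabla Q_2(\mathbf{b})+\mathbf{m}\equiv\mathbf{0}$ for some $a_2$ (this is what makes the dual-variety condition enter). You acknowledge this (``instead the correct reduction keeps $a_2$-type information'') but never carry it out; the burden of the proof is precisely in doing so. Relatedly, the hyperplane that should arise is $\mathbf{m}\cdot\mathbf{x}-x=0$ in $\mathbb{P}^r$ (matching $G(\mathbf{m},-1)$), not $\mathbf{m}\cdot\mathbf{x}=0$ in $\mathbb{P}^{r-1}$ as you write.

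In short: the conceptual target is right and the approach could in principle be pushed through, but the two steps you explicitly flag as subtleties — the odd-$c$ truncation and the bookkeeping of the $Q_2$-constraint — are exactly where the argument breaks, and neither is resolved in the proposal. The paper resolves both, by a different mechanism: after isolating the $p\mid\mathbf{b}$ contribution, it produces Gau{\ss} sums by averaging over dilations, executes the resulting Ramanujan sums explicitly, and then derives \emph{exact} cancellation from the equality $\#(\hat W-0)(\mathbb{Z}/p^{\ell}\mathbb{Z})=p^{r-1}\#(\hat W-0)(\mathbb{Z}/p^{\ell-1}\mathbb{Z})$ for the relevant smooth punctured cones $\hat W-0$ and $\hat V-0$ (Hensel), with the odd-$c$ case requiring a separate Gau{\ss}-sum evaluation.
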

\begin{proof}
In the sum
$$\widetilde{C}_{p^c,1,k,m}(\mathbf{m})=\underset{a_1\, (\mathrm{mod}\,p^c)}{\left.\sum\right.^{\ast}}\underset{\substack{\mathbf{b}\, (\mathrm{mod}\,p^c)\\Q_2(\mathbf{b})\equiv0\, (\mathrm{mod}\,p^c)}}{\left.\sum\right.}e\left(\frac{a_1Q_1(\mathbf{b})+\overline{a_1}m\overline{k}+\mathbf{b}\cdot\mathbf{m}}{p^c}\right)\text{,}$$
we begin by handling the contribution of those $\mathbf{b}$ for which $p\mid\mathbf{b}$, i.e., for which $\mathbf{b}=p\mathbf{b}'$ for some $\mathbf{b}'\, (\mathrm{mod}\,p^{c-1})$. This contribution is
$$\underset{\substack{\mathbf{b}'\, (\mathrm{mod}\,p^{c-1})\\Q_2(\mathbf{b}')\equiv0\, (\mathrm{mod}\,p^{c-2})}}{\left.\sum\right.}\underset{a_1\, (\mathrm{mod}\,p^c)}{\left.\sum\right.^{\ast}}e\left(\frac{p^2a_1Q_1(\mathbf{b}')+\overline{a_1}m\overline{k}+p\mathbf{b}'\cdot\mathbf{m}}{p^c}\right)\text{,}$$
and writing $a_1=p^{c-1}u+v$, where $u$ ranges modulo $p$ and $v$ ranges over primitive residue classes modulo $p^{c-1}$, the inner sum becomes
$$e\left(\frac{\mathbf{b}'\cdot\mathbf{m}}{p^{c-1}}\right)\underset{v\, (\mathrm{mod}\,p^{c-1})}{\left.\sum\right.^{\ast}}e\left(\frac{vQ_1(\mathbf{b}')}{p^{c-2}}\right)\underset{u\, (\mathrm{mod}\,p)}{\left.\sum\right.^{\ast}}e\left(\frac{\overline{p^{c-1}u+v}m\overline{k}}{p^c}\right)\text{.}$$
Now we observe that
$$\overline{p^{c-1}u+v}\equiv\overline{v}-\overline{v}^2p^{c-1}u\,(\mathrm{mod}\,p^c)\text{,}$$
whence
$$\underset{u\, (\mathrm{mod}\,p)}{\left.\sum\right.^{\ast}}e\left(\frac{\overline{p^{c-1}u+v}m\overline{k}}{p^c}\right)=e\left(\frac{\overline{v}m\overline{k}}{p^c}\right)\underset{u\, (\mathrm{mod}\,p)}{\left.\sum\right.^{\ast}}e\left(\frac{-\overline{v}^{-2}m\overline{k}u}{p}\right)=0\text{.}$$
This shows that
\begin{equation}
\label{newform}\widetilde{C}_{p^c,1,k,m}(\mathbf{m})=\underset{a_1\, (\mathrm{mod}\,p^c)}{\left.\sum\right.^{\ast}}\underset{\substack{\mathbf{b}\, (\mathrm{mod}\,p^c)\\Q_2(\mathbf{b})\equiv0\, (\mathrm{mod}\,p^c)\\p\nmid\mathbf{b}}}{\left.\sum\right.}e\left(\frac{a_1Q_1(\mathbf{b})+\overline{a_1}m\overline{k}+\mathbf{b}\cdot\mathbf{m}}{p^c}\right)\text{.}
\end{equation}
For the rest of the proof it is convenient to distinguish two cases according to the parity of $c$. We begin with the case where $2\mid c$. Then for any $t\in(\mathbb{Z}/p^c\mathbb{Z})^\times$ the variable change from $a_1$ to $a_1t^{-2}$ and from $\mathbf{b}$ to $t\mathbf{b}$ in \eqref{newform} yields
$$\widetilde{C}_{p^c,1,k,m}(\mathbf{m})=\underset{a_1\, (\mathrm{mod}\,p^c)}{\left.\sum\right.^{\ast}}\underset{\substack{\mathbf{b}\, (\mathrm{mod}\,p^c)\\Q_2(\mathbf{b})\equiv0\, (\mathrm{mod}\,p^c)\\p\nmid\mathbf{b}}}{\left.\sum\right.}e\left(\frac{a_1Q_1(\mathbf{b})+\overline{a_1}m\overline{k}t^2+t\mathbf{b}\cdot\mathbf{m}}{p^c}\right)\text{,}$$
and summing over all $t\in(\mathbb{Z}/p^c\mathbb{Z})^\times$ yields
\begin{align*}
\varphi(p^c)\widetilde{C}_{p^c,1,k,m}(\mathbf{m})&=\underset{t\, (\mathrm{mod}\,p^c)}{\left.\sum\right.^{\ast}}\underset{a_1\, (\mathrm{mod}\,p^c)}{\left.\sum\right.^{\ast}}\underset{\substack{\mathbf{b}\, (\mathrm{mod}\,p^c)\\Q_2(\mathbf{b})\equiv0\, (\mathrm{mod}\,p^c)\\p\nmid\mathbf{b}}}{\left.\sum\right.}e\left(\frac{a_1Q_1(\mathbf{b})+\overline{a_1}m\overline{k}t^2+t\mathbf{b}\cdot\mathbf{m}}{p^c}\right)\\
&=\underbrace{\underset{a_1\, (\mathrm{mod}\,p^c)}{\left.\sum\right.^{\ast}}\underset{\substack{\mathbf{b}\, (\mathrm{mod}\,p^c)\\Q_2(\mathbf{b})\equiv0\, (\mathrm{mod}\,p^c)\\p\nmid\mathbf{b}}}{\left.\sum\right.}\underset{t\, (\mathrm{mod}\,p^c)}{\left.\sum\right.}e\left(\frac{a_1Q_1(\mathbf{b})+\overline{a_1}m\overline{k}t^2+t\mathbf{b}\cdot\mathbf{m}}{p^c}\right)}_{S_1}\\
&-\underbrace{\underset{a_1\, (\mathrm{mod}\,p^c)}{\left.\sum\right.^{\ast}}\underset{\substack{\mathbf{b}\, (\mathrm{mod}\,p^c)\\Q_2(\mathbf{b})\equiv0\, (\mathrm{mod}\,p^c)\\p\nmid\mathbf{b}}}{\left.\sum\right.}\underset{\substack{t\, (\mathrm{mod}\,p^c)\\p\mid t}}{\left.\sum\right.}e\left(\frac{a_1Q_1(\mathbf{b})+\overline{a_1}m\overline{k}t^2+t\mathbf{b}\cdot\mathbf{m}}{p^c}\right)}_{S_2}\text{.}
\end{align*}
We evaluate $S_1$ and $S_2$ separately. For $S_1$, we note again that $\overline{a_1}m\overline{k}t^2+t\mathbf{m}\cdot\mathbf{b}=\overline{a_1}m\overline{k}(t+\overline{2m}a_1k\mathbf{m}\cdot\mathbf{b})^2-a_1\overline{4m}k(\mathbf{m}\cdot\mathbf{b})^2$, so making the variable change from $t$ to $t+\overline{2m}a_1k\mathbf{m}\cdot\mathbf{b}$ yields
\begin{align}\label{s1}S_1&=\underset{a_1\, (\mathrm{mod}\,p^c)}{\left.\sum\right.^{\ast}}\underset{\substack{\mathbf{b}\, (\mathrm{mod}\,p^c)\\Q_2(\mathbf{b})\equiv0\, (\mathrm{mod}\,p^c)\\p\nmid\mathbf{b}}}{\left.\sum\right.}e\left(\frac{a_1(Q_1(\mathbf{b})-\overline{4m}k(\mathbf{m}\cdot\mathbf{b})^2)}{p^c}\right)\underset{t\, (\mathrm{mod}\,p^c)}{\left.\sum\right.}e\left(\frac{\overline{a_1}m\overline{k}t^2}{p^c}\right)\\
&=p^{c/2}\underset{a_1\, (\mathrm{mod}\,p^c)}{\left.\sum\right.^{\ast}}\underset{\substack{\mathbf{b}\, (\mathrm{mod}\,p^c)\\Q_2(\mathbf{b})\equiv0\, (\mathrm{mod}\,p^c)\\p\nmid\mathbf{b}}}{\left.\sum\right.}e\left(\frac{a_1(Q_1(\mathbf{b})-\overline{4m}k(\mathbf{m}\cdot\mathbf{b})^2)}{p^c}\right)\nonumber
\end{align}
by evaluating the Gau{\ss} sum on the right. We now execute the sum over $a_1$, which is a Ramanujan sum, yielding
\begin{align*}p^{-c/2}S_1
&=p^c\#\{\mathbf{b}\, (\mathrm{mod}\,p^c):p\nmid\mathbf{b},4mQ_1(\mathbf{b})-k(\mathbf{m}\cdot\mathbf{b})^2\equiv Q_2(\mathbf{b})\equiv0\, (\mathrm{mod}\,p^c)\}\\
&-p^{c-1}\#\{\mathbf{b}\, (\mathrm{mod}\,p^c):p\nmid\mathbf{b},4mQ_1(\mathbf{b})-k(\mathbf{m}\cdot\mathbf{b})^2\equiv0\, (\mathrm{mod}\,p^{c-1}),Q_2(\mathbf{b})\equiv0\, (\mathrm{mod}\,p^c)\}\text{.}
\end{align*}
We recall that we observed at the end of the proof of Lemma \ref{cp1} that the projective variety $W$ in $\mathbb{P}^{r-1}$ defined by $4mQ_1(\mathbf{b})-k(\mathbf{m}\cdot\mathbf{b})^2=Q_2(\mathbf{b})=0$ is smooth modulo $p$ as long as $p\nmid G(m,-1)$, which we are assuming. The affine cone $\hat{W}$ over it in $\mathbb{A}^r$ is, therefore, smooth modulo $p$ away from the origin, or in other words the punctured affine cone $\hat{W}-0$ is smooth modulo $p$. It follows then from Hensel's Lemma that
$$p^{-c/2}S_1=p^c\cdot p^{r-2}\#(\hat{W}-0)(\mathbb{Z}/p^{c-1}\mathbb{Z})-p^{c-1}\cdot p^{r-1}\#(\hat{W}-0)(\mathbb{Z}/p^{c-1}\mathbb{Z})=0\text{.}$$
Hence $S_1=0$.

We now turn our attention to $S_2$. We begin by investigating the sum over $t$ in more detail. By setting $t=p^{c-1}u+pv$ where $u$ is defined modulo $p$ and $v$ is defined modulo $p^{c-2}$, we can rewrite it as
\begin{align*}&\sum_{u,v}e\left(\frac{p\overline{a_1}m\overline{k}(p^{c-2}u+v)^2+(p^{c-2}u+v)\mathbf{b}\cdot\mathbf{m}}{p^{c-1}}\right)\\
&=\sum_{v\,(\mathrm{mod}\,p^{c-2})}e\left(\frac{p\overline{a_1}m\overline{k}v^2+v\mathbf{b}\cdot\mathbf{m}}{p^{c-1}}\right)\sum_{u\,(\mathrm{mod}\,p)}e\left(\frac{u\mathbf{b}\cdot\mathbf{m}}{p}\right)\text{.}
\end{align*}
This equals $0$ if $p\nmid\mathbf{b}\cdot\mathbf{m}$. If, on the other hand, we have $\mathbf{b}\cdot\mathbf{m}=px$ for some $x$ defined modulo $p^{c-1}$, the above becomes
\begin{align}\label{pregau}p\sum_{v\,(\mathrm{mod}p^{c-2})}e\left(\frac{\overline{a_1}m\overline{k}v^2+xv}{p^{c-2}}\right)&=pe\left(\frac{-\overline{4}a_1\overline{m}kx^2}{p^{c-2}}\right)\sum_{v\,(\mathrm{mod}p^{c-2})}e\left(\frac{\overline{a_1}m\overline{k}(v+\overline{2}a_1\overline{m}kx)^2}{p^{c-2}}\right)\\
&=p^{c/2}e\left(\frac{-\overline{4}a_1\overline{m}kx^2}{p^{c-2}}\right)\text{.}\nonumber
\end{align}
It therefore follows that
$$S_2=p^{c/2}\sum_{x\,(\mathrm{mod}\,p^{c-1})}\underset{a_1\, (\mathrm{mod}\,p^c)}{\left.\sum\right.^{\ast}}\underset{\substack{\mathbf{b}\, (\mathrm{mod}\,p^c)\\Q_2(\mathbf{b})\equiv0\, (\mathrm{mod}\,p^c)\\\mathbf{m}\cdot\mathbf{b}-px\equiv0\,(\mathrm{mod}\,p^c)\\p\nmid\mathbf{b}}}{\left.\sum\right.}e\left(\frac{a_1(Q_1(\mathbf{b})-\overline{4}p^2\overline{m}kx^2)}{p^c}\right)$$
Executing the Ramanujan sum over $a_1$, an application of Hensel's Lemma in the style of our analysis of $S_1$ shows that the inner sum is $0$, the variety whose smoothness modulo $p$ is relevant being now simply $V$. It follows that $S_2=0$, and this finishes the proof in the case where $2\mid c$.

If $2\nmid c$ we can write $\varphi(p^c)\widetilde{C}_{p^c,1,k,m}(\mathbf{m})=S_1-S_2$ exactly as before, but when we reach \eqref{s1} the evaluation of the Gau{\ss} sum over $t$ now yields
\begin{equation}
\label{ns1}S_1=\varepsilon_p\left(\frac{mk}{p}\right)p^{c/2}\underset{a_1\, (\mathrm{mod}\,p^c)}{\left.\sum\right.^{\ast}}\left(\frac{a_1}{p}\right)\underset{\substack{\mathbf{b}\, (\mathrm{mod}\,p^c)\\Q_2(\mathbf{b})\equiv0\, (\mathrm{mod}\,p^c)\\p\nmid\mathbf{b}}}{\left.\sum\right.}e\left(\frac{a_1(Q_1(\mathbf{b})-\overline{4m}k(\mathbf{m}\cdot\mathbf{b})^2)}{p^c}\right)\text{.}
\end{equation}
The sum over $a_1$ is of the form
\begin{equation}
\label{s}S=\underset{a_1\, (\mathrm{mod}\,p^c)}{\left.\sum\right.^{\ast}}\left(\frac{a_1}{p}\right)e\left(\frac{a_1z}{p^c}\right)\text{.}
\end{equation}
We claim that $S=0$ unless $p^{c-1}\mid z$. To see this, write $a=a+pb$ where $a$ is defined modulo $p$ and $b$ is defined modulo $p^{c-1}$, and the sum becomes
\begin{align*}S&=\underset{a\,(\mathrm{mod}\,p)}{\left.\sum\right.^{\ast}}\left(\frac{a}{p}\right)\sum_{b\,(\mathrm{mod}\,p^{c-1})}e\left(\frac{(a+pb)z}{p^{c}}\right)\\
&=\underset{a\,(\mathrm{mod}\,p)}{\left.\sum\right.^{\ast}}\left(\frac{a}{p}\right)e\left(\frac{az}{p^c}\right)\sum_{b\,(\mathrm{mod}\,p^{c-1})}e\left(\frac{bz}{p^{c-1}}\right)\text{,}
\end{align*}
and the inner sum vanishes if $p^{c-1}\nmid z$. If, on the other hand, $p^{c-1}\mid z$ (with $z=p^{c-1}w$, say), we obtain
$$S=p^{c-1}\underset{a\,(\mathrm{mod}\,p)}{\left.\sum\right.^{\ast}}\left(\frac{a}{p}\right)e\left(\frac{aw}{p}\right)=p^{c-1/2}\varepsilon_p\left(\frac{w}{p}\right)\text{.}$$
It then follows from \eqref{ns1} that
\begin{align*}
&p^{1/2-3c/2}(-1)^{(p-1)/2}\left(\frac{mk}{p}\right)S_1\\
&=\sum_{w\,(\mathrm{mod}\,p)}\left(\frac{w}{p}\right)\#\{\mathbf{b}\in(\mathbb{Z}/p^c\mathbb{Z})^r:p\nmid\mathbf{b},Q_2(\mathbf{b})=Q_1(\mathbf{b})-\overline{4m}k(\mathbf{m}\cdot\mathbf{b})^2-p^{c-1}w=0\}\text{.}
\end{align*}
Similarly to the case where $2\mid c$, smoothness of the punctured affine cone $\hat{W}-0$ and Hensel's Lemma together imply that the above count equals $p^{r-2}\#(\hat{W}-0)(\mathbb{Z}/p^{c-1}\mathbb{Z})$ for \emph{every} $w$, whence
$$p^{1/2-3c/2}\varepsilon_p^{-1}\left(\frac{mk}{p}\right)S_1=p^{r-1}\left(\sum_{w\,(\mathrm{mod}\,p)}\left(\frac{w}{p}\right)\right)\#(\hat{W}-0)(\mathbb{Z}/p^{c-1}\mathbb{Z})=0\text{.}$$
We deduce that $S_1=0$.

For $S_2$, we can also recover part of the work from the case where $2\mid c$. Indeed, the proof that vectors $\mathbf{b}$ with $p\nmid\mathbf{b}\cdot\mathbf{m}$ do not contribute to the sum remains valid, but if $\mathbf{b}\cdot\mathbf{m}=px$ the Gau{\ss} sum over $v$ in \eqref{pregau} now equals
$$\left(\frac{mk}{p}\right)\left(\frac{a_1}{p}\right)\varepsilon_pp^{c/2-1}\text{.}$$
Hence this time we obtain
$$S_2=\varepsilon_p\left(\frac{mk}{p}\right)p^{c/2}\sum_{x\,(\mathrm{mod}\,p^{c-1})}\underset{a_1\, (\mathrm{mod}\,p^c)}{\left.\sum\right.^{\ast}}\left(\frac{a_1}{p}\right)\underset{\substack{\mathbf{b}\, (\mathrm{mod}\,p^c)\\Q_2(\mathbf{b})\equiv0\, (\mathrm{mod}\,p^c)\\\mathbf{m}\cdot\mathbf{b}-px\equiv0\,(\mathrm{mod}\,p^c)\\p\nmid\mathbf{b}}}{\left.\sum\right.}e\left(\frac{a_1(Q_1(\mathbf{b})-\overline{4}p^2\overline{m}kx^2)}{p^c}\right)\text{.}$$
Our investigation of the sum $S$ in \eqref{s} then shows that the sum over $a_1$ above equals
\begin{align*}
\varepsilon_pp^{c-1/2}\sum_{w\,(\mathrm{mod}\,p)}\left(\frac{w}{p}\right)\#\{\mathbf{b}\in(\mathbb{Z}/p^c\mathbb{Z})^r:p\nmid\mathbf{b},Q_2(\mathbf{b})=Q_1(\mathbf{b})-\overline{4}p^2\overline{m}kx^2-p^{c-1}w=0\}\text{.}
\end{align*}
Now the smoothness of $V$ modulo $p$, and hence of the punctured affine cone $\hat{V}-0$ in $\mathbb{A}^r$, implies by Hensel's Lemma that the above count equals $p^{r-2}\#(\hat{V}-0)(\mathbb{Z}/p^{c-1}\mathbb{Z})$ independently of $w$, which shows that the sum over $a_1$ equals
$$\varepsilon_pp^{c-1/2}\left(\sum_{w\,(\mathrm{mod}\,p)}\left(\frac{w}{p}\right)\right)p^{r-2}\#(\hat{V}-0)(\mathbb{Z}/p^{c-1}\mathbb{Z})=0\text{.}$$
Hence $S_2=0$, and together with $S_1=0$ this implies that $\widetilde{C}_{p^c,1,k,m}(\mathbf{m})=0$, as claimed.
\end{proof}

To conclude our investigation of the sums $\widetilde{C}_{q_1,1,k,m}$, we will need a general purpose bound for $\widetilde{C}_{p^c,1,k,m}(\mathbf{m})$ that holds even for the exceptional primes for which Lemmas \ref{cp1} and \ref{cpc1} do not apply. As expected, the bound we obtain will be weaker, but this will be compensated for by the sparsity of integers which are products of such exceptional primes.

\begin{lemma}
\label{gencq1}
We have
$$\widetilde{C}_{q_1,1,k,m}(\mathbf{m})\ll_\varepsilon q_1^{r/2+1+\varepsilon}\text{.}$$
\end{lemma}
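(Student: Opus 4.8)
The plan is to reduce, by the triangle inequality, to exponential sums already treated in \cite{Munshi}, paying a full power of $q_1$ for a trivial summation over $a_1$. In the definition \eqref{c} of $\widetilde{C}_{q_1,1,k,m}(\mathbf{m})$ (i.e.\ with $q_2=1$) the factors $\chi_{D_1}(a_1)$ and $e(\overline{a_1}m\overline{k}/q_1)$ depend only on $a_1$, not on $\mathbf{b}$, and each has modulus $\le 1$; hence, moving the $a_1$-sum outside the absolute value,
\[
\bigl|\widetilde{C}_{q_1,1,k,m}(\mathbf{m})\bigr|\le\underset{a_1\,(\mathrm{mod}\,q_1)}{\left.\sum\right.^{\ast}}\Bigl|\underset{\substack{\mathbf{b}\,(\mathrm{mod}\,q_1)\\Q_2(\mathbf{b})\equiv0\,(\mathrm{mod}\,q_1)}}{\left.\sum\right.}e\!\left(\frac{a_1Q_1(\mathbf{b})+\mathbf{b}\cdot\mathbf{m}}{q_1}\right)\Bigr|=\underset{a_1\,(\mathrm{mod}\,q_1)}{\left.\sum\right.^{\ast}}\bigl|T_{a_1}(\mathbf{m})\bigr|,
\]
say. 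As explained just before the statement of the lemma, for each fixed $a_1$ the sum $T_{a_1}(\mathbf{m})$ is, in absolute value, exactly one of the complete exponential sums handled in \cite{Munshi}, and the bound obtained there by summing over $\mathbf{b}$ applies without change, giving $|T_{a_1}(\mathbf{m})|\ll_\varepsilon q_1^{r/2+\varepsilon}$ uniformly in $a_1$ and $\mathbf{m}$.

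For completeness I would recall the mechanism behind that bound. One reduces to prime powers $q_1=p^c$ by multiplicativity, detects the congruence $Q_2(\mathbf{b})\equiv0\,(\mathrm{mod}\,p^c)$ by inserting $p^{-c}\sum_{a_2\,(\mathrm{mod}\,p^c)}e(a_2Q_2(\mathbf{b})/p^c)$, and completes the resulting quadratic Gauss sum in $\mathbf{b}$; its modulus is controlled by the $p$-adic valuation of the discriminant form $\det(a_1\mathbf{M}_1+a_2\mathbf{M}_2)$ of the pencil, a squarefree binary form of degree $r$ precisely because $V=V(Q_1,Q_2)$ is a smooth complete intersection, after which a Hensel-lifting argument bounds the $(a_2,\mathbf{b})$-sums by $\ll_\varepsilon p^{cr/2+\varepsilon c}$ away from finitely many primes (the case $c=1$ being Proposition~\ref{del} applied to the variety of Lemma~\ref{int}, whose singular locus has dimension $\le0$ by the smoothness of $V$). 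The finitely many exceptional primes --- those dividing $D$, or at which $V$ or the pencil degenerates modulo $p$ --- contribute only an overall constant, absorbed into the factor $q_1^\varepsilon$.

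Summing $|T_{a_1}(\mathbf{m})|\ll_\varepsilon q_1^{r/2+\varepsilon}$ trivially over the fewer than $q_1$ reduced residues $a_1$ gives $|\widetilde{C}_{q_1,1,k,m}(\mathbf{m})|\ll_\varepsilon q_1^{r/2+1+\varepsilon}$, as claimed. I do not expect a genuine obstacle inside this lemma itself: its only real content is the immediate remark that $\chi_{D_1}(a_1)$ and $\overline{a_1}m\overline{k}$ drop out under the absolute value, reducing the sum to one already understood. The wasteful step is the trivial summation over $a_1$, costing a full factor $q_1$; this is why the estimate is invoked later only for the sparse moduli built from exceptional primes, cancellation in the $a_1$-variable (of Kloosterman type) being exploited instead for generic $q_1$ in Lemmas~\ref{cp1} and \ref{cpc1}.
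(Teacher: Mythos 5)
Your proposal is correct and follows the same route as the paper: reduce to prime powers via the multiplicativity relation in Lemma \ref{multrel}, discard the $a_1$-dependent unimodular phases $\chi_{D_1}(a_1)$ and $e(\overline{a_1}m\overline{k}/q_1)$ by the triangle inequality, and then bound what remains for each fixed $a_1$ by invoking Lemma 7 of Munshi \cite{Munshi}, paying a factor $q_1$ for the trivial $a_1$-summation. The only cosmetic difference is that the paper detects $Q_2(\mathbf{b})\equiv 0\,(\mathrm{mod}\,p^c)$ by an auxiliary sum before taking the maximum over $a_1$ (so the $p^{-c}$ from detection cancels the $\varphi(p^c)$ from the $a_1$-sum, turning the sum into a max), whereas you keep the congruence constraint in $T_{a_1}(\mathbf{m})$ and sum over $a_1$ explicitly; these are equivalent.
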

\begin{proof}
By Lemma \ref{multrel} and the elementary estimate $A^{\omega(q_1)}\ll_{A,\varepsilon} q_1^\varepsilon$, where $\omega(q_1)$ denotes the number of prime factors of $q_1$, it suffices to prove this under the assumption that $q_1=p^c$ for some prime $p$ and some $c\geq1$ (in which case we will actually be able to remove the $\varepsilon$ term). We use orthogonality of additive characters to detect the condition $Q_2(\mathbf{b})\equiv0\,(\mathrm{mod}\,p^c)$ and write
$$\widetilde{C}_{p^c,1,k,m}(\mathbf{m})=p^{-c}\underset{a_1\, (\mathrm{mod}\,p^c)}{\left.\sum\right.^{\ast}}\underset{b\, (\mathrm{mod}\,p^c)}{\left.\sum\right.}\underset{\substack{\mathbf{b}\, (\mathrm{mod}\,p^c)}}{\left.\sum\right.}\chi_{D_1}(a_1)e\left(\frac{a_1Q_1(\mathbf{b})+\overline{a_1}m\overline{k}+a_1bQ_2(\mathbf{b})+\mathbf{b}\cdot\mathbf{m}}{p^c}\right)\text{.}$$
Summing trivially over $a_1$ yields
$$|\widetilde{C}_{p^c,1,k,m}(\mathbf{m})|\leq\underset{a_1\, (\mathrm{mod}\,p^c)}{\left.\max\right.^{\ast}}\left|\underset{b\, (\mathrm{mod}\,p^c)}{\left.\sum\right.}\underset{\substack{\mathbf{b}\, (\mathrm{mod}\,p^c)}}{\left.\sum\right.}e\left(\frac{a_1Q_1(\mathbf{b})+a_1bQ_2(\mathbf{b})+\mathbf{b}\cdot\mathbf{m}}{p^c}\right)\right|\text{.}$$
From this point on, the proof of Lemma 7 in \cite{Munshi} can be quoted verbatim, as explained at the beginning of the section.
\end{proof}

At this point it is appropriate to introduce some notation and background that will be used afterwards. Given an integer matrix $\mathbf{M}$, an integer vector $\mathbf{a}$, and an integer $q$, we define
$$K_q(\mathbf{M};\mathbf{a})=\#\{\mathbf{x}\,(\mathrm{mod}\,q):\mathbf{M}\mathbf{x}\equiv\mathbf{a}\,(\mathrm{mod}\,q)\}\text{.}$$
The following result is a general-purpose tool for estimating quadratic exponential sums.
\begin{lemma}
\label{genpur}
Let $Q\in\mathbb{Z}[x_1,\ldots,x_r]$ be an integer quadratic form with matrix $\mathbf{M}$. Then for any prime $p$, any positive integer $c$ and any integer vector $\mathbf{m}\in\mathbb{Z}^r$, we have
$$\left|\sum_{\mathbf{k}\,(\mathrm{mod}\,p^c)}e\left(\frac{Q(\mathbf{k})+\mathbf{m}\cdot\mathbf{k}}{p^c}\right)\right|\leq p^{rc/2}\sqrt{K_{p^c}(2\mathbf{M};\mathbf{0})}\text{.}$$
\end{lemma}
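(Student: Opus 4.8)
The plan is to use the standard Weyl-differencing (``square-completing'') trick for complete quadratic exponential sums. Write $S$ for the sum on the left-hand side. First I would form
$$|S|^2=\sum_{\mathbf{k}\,(\mathrm{mod}\,p^c)}\sum_{\mathbf{l}\,(\mathrm{mod}\,p^c)}e\left(\frac{Q(\mathbf{k})-Q(\mathbf{l})+\mathbf{m}\cdot(\mathbf{k}-\mathbf{l})}{p^c}\right)$$
and substitute $\mathbf{k}=\mathbf{l}+\mathbf{h}$, with $\mathbf{h}$ running over $(\mathbb{Z}/p^c\mathbb{Z})^r$ (permissible since all phases are $p^c$-periodic in $\mathbf{k}$). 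Using the polarization identity $Q(\mathbf{l}+\mathbf{h})-Q(\mathbf{l})=Q(\mathbf{h})+(2\mathbf{M}\mathbf{h})\cdot\mathbf{l}$ — valid under the convention that $\mathbf{M}$ is the symmetric matrix with $Q(\mathbf{x})=\mathbf{x}^{\top}\mathbf{M}\mathbf{x}$, so that $2\mathbf{M}$ is the integral Hessian matrix occurring in the statement — this becomes
$$|S|^2=\sum_{\mathbf{h}\,(\mathrm{mod}\,p^c)}e\left(\frac{Q(\mathbf{h})+\mathbf{m}\cdot\mathbf{h}}{p^c}\right)\sum_{\mathbf{l}\,(\mathrm{mod}\,p^c)}e\left(\frac{(2\mathbf{M}\mathbf{h})\cdot\mathbf{l}}{p^c}\right)\text{.}$$

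Next I would evaluate the inner sum over $\mathbf{l}$: it factors as a product over the $r$ coordinates of $\mathbf{l}$ of complete additive character sums $\sum_{l_i\,(\mathrm{mod}\,p^c)}e\!\left(l_i(2\mathbf{M}\mathbf{h})_i/p^c\right)$, each equal to $p^c$ when $p^c\mid(2\mathbf{M}\mathbf{h})_i$ and to $0$ otherwise. Hence the inner sum equals $p^{rc}$ precisely when $2\mathbf{M}\mathbf{h}\equiv\mathbf{0}\,(\mathrm{mod}\,p^c)$ and vanishes otherwise, so that
$$|S|^2=p^{rc}\sum_{\substack{\mathbf{h}\,(\mathrm{mod}\,p^c)\\2\mathbf{M}\mathbf{h}\equiv\mathbf{0}\,(\mathrm{mod}\,p^c)}}e\left(\frac{Q(\mathbf{h})+\mathbf{m}\cdot\mathbf{h}}{p^c}\right)\text{.}$$
Finally I would bound the remaining sum trivially by the number of its terms, which is by definition exactly $K_{p^c}(2\mathbf{M};\mathbf{0})$, giving $|S|^2\le p^{rc}K_{p^c}(2\mathbf{M};\mathbf{0})$; taking square roots yields the claim.

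I do not expect any genuine obstacle here, as the argument is entirely routine; the only point calling for a little care is the bookkeeping of the matrix normalization — one must ensure that the cross-term produced by polarization is $(2\mathbf{M}\mathbf{h})\cdot\mathbf{l}$ with $2\mathbf{M}$ an \emph{integer} matrix, so that the counting function $K_{p^c}(2\mathbf{M};\mathbf{0})$ is well defined, which is exactly why the factor $2$ is present in the statement.
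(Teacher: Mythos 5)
Your proof is correct and follows exactly the same Weyl-differencing/square-completion argument as the paper, down to the same change of variables and evaluation of the inner linear character sum. The only cosmetic difference is that you correctly write $|S|^2$ where the paper, somewhat loosely, writes $S^2$, and you spell out the factorization of the inner sum more explicitly.
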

\begin{proof}
Let $S$ denote the left-hand side. Then
\begin{align*}S^2&=\sum_{\mathbf{x},\mathbf{y}\,(\mathrm{mod}\,p^c)}e\left(\frac{Q(\mathbf{x})-Q(\mathbf{y})+\mathbf{m}\cdot(\mathbf{x}-\mathbf{y})}{p^c}\right)\\
&=\sum_{\mathbf{z},\mathbf{y}\,(\mathrm{mod}\,p^c)}e\left(\frac{Q(\mathbf{y+z})-Q(\mathbf{y})+\mathbf{m}\cdot z}{p^c}\right)\\
&=\sum_{\mathbf{z},\mathbf{y}\,(\mathrm{mod}\,p^c)}e\left(\frac{Q(\mathbf{z})+2\mathbf{y}^T\mathbf{M}\mathbf{z}+\mathbf{m}\cdot z}{p^c}\right)\text{.}
\end{align*}
The sum over $\mathbf{y}$ equals $0$ unless $p^c\mid 2\mathbf{M}\mathbf{z}$, in which case it equals $p^{rc}e(Q(\mathbf{z})/p^c)$. The result follows by summing over $\mathbf{z}$ trivially.
\end{proof}

This finishes our study of the sums $\widetilde{C}_{q_1,1,k,m}$. The sums $\widetilde{C}_{1,q_2,k,m}$, on the other hand, are substantially simpler to study. Indeed, we have
$$\widetilde{C}_{1,q_2,k,m}(\mathbf{m})=\underset{a_2\, (\mathrm{mod}\,q_2)}{\left.\sum\right.^{\ast}}\underset{\mathbf{b}\, (\mathrm{mod}\,q_2)}{\left.\sum\right.}e\left(\frac{a_2Q_2(\mathbf{b})+\mathbf{b}\cdot\mathbf{m}}{q_2}\right)\text{,}$$
and the inner sum is a simple Gau{\ss} sum. In fact, one readily sees that $\widetilde{C}_{1,q_2,k,m}(\mathbf{m})=\mathscr{Q}_{q_2}(\mathbf{m})$ in the notation of \cite{BM}. Therefore one may import the explicit formulae from Lemma 15 in \cite{BM} to obtain the following immediate consequence.
\begin{lemma}
\label{goodc1q}
Let $p$ be a prime such that $p\nmid 2\det\mathbf{M}_2$ and $p\nmid Q_2^\ast(\mathbf{m})$, and let $c\geq1$.
\begin{enumerate}[label=(\roman*)]
\label{cq21}
\item If $r$ is even, then
$$|\widetilde{C}_{1,p^c,k,m}(\mathbf{m})|\leq p^{rc/2}\text{.}$$
\item If $r$ is odd, then
$$|\widetilde{C}_{1,p^c,k,m}(\mathbf{m})|\leq p^{(r+1)c/2}\text{.}$$
\end{enumerate}
\end{lemma}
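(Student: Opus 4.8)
The plan is to recognise the sum in question as one that has already been evaluated in \cite{BM}. First I would unravel \eqref{c} with $q_1=1$, so that $D_1=1$, the character $\chi_{D_1}$ is trivial, and the parameters $k$ and $m$ disappear entirely, leaving
$$\widetilde C_{1,q_2,k,m}(\mathbf m)=\underset{a_2\,(\mathrm{mod}\,q_2)}{\left.\sum\right.^{\ast}}\ \underset{\mathbf b\,(\mathrm{mod}\,q_2)}{\left.\sum\right.}e\!\left(\frac{a_2Q_2(\mathbf b)+\mathbf b\cdot\mathbf m}{q_2}\right),$$
which is precisely the exponential sum $\mathscr Q_{q_2}(\mathbf m)$ of \cite{BM}. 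The bounds (i) and (ii) are then exactly the evaluation of $\mathscr Q_{p^c}(\mathbf m)$ recorded in Lemma~15 of \cite{BM} under the hypotheses $p\nmid 2\det\mathbf{M}_2$ and $p\nmid Q_2^{\ast}(\mathbf m)$, so at this point one could simply quote that result.

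For a self-contained argument I would first complete the square in $\mathbf b$. Since $p\nmid 2\det\mathbf{M}_2$, the matrix $\mathbf{M}_2$ is invertible modulo $p^c$, and the phase can be rewritten as $a_2Q_2\!\left(\mathbf b+\overline{2a_2}\,\mathbf{M}_2^{-1}\mathbf m\right)-\overline{4a_2\det\mathbf{M}_2}\,Q_2^{\ast}(\mathbf m)$; translating the summation variable $\mathbf b$ then factors $\widetilde C_{1,p^c,k,m}(\mathbf m)$ into an outer sum over $a_2$ carrying the phase $e\!\left(-\overline{4a_2\det\mathbf{M}_2}\,Q_2^{\ast}(\mathbf m)/p^c\right)$ and an inner $r$-dimensional quadratic Gau\ss{} sum $\sum_{\mathbf b\,(\mathrm{mod}\,p^c)}e(a_2Q_2(\mathbf b)/p^c)$. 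Diagonalising $Q_2$ over $\mathbb{Z}_p$ (possible for odd $p$ not dividing $\det\mathbf{M}_2$) and applying the one-variable Gau\ss{} sum evaluation \eqref{gaus}, the inner sum equals $\left(\tfrac{a_2}{p^c}\right)^{r}$ (Jacobi symbol) times a factor of modulus $p^{rc/2}$ depending only on $Q_2$ and $p^c$.

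It then remains to sum over $a_2$, and here the parity of $r$ produces the two cases. If $r$ is even, or if $r$ is odd and $c$ is even, then $\left(\tfrac{a_2}{p^c}\right)^{r}\equiv1$, and, after the bijective substitution $a_2\mapsto\overline{a_2}$, the remaining sum over $a_2$ is a Ramanujan sum whose argument is coprime to $p$ (this is exactly where $p\nmid Q_2^{\ast}(\mathbf m)$ enters); it therefore vanishes for $c\ge2$ and has modulus $1$ for $c=1$, so $|\widetilde C_{1,p^c,k,m}(\mathbf m)|\le p^{rc/2}$, which gives (i) and the part of (ii) with $c$ even. If $r$ and $c$ are both odd, the surviving factor is the Legendre symbol $\left(\tfrac{a_2}{p}\right)$, so the $a_2$-sum is $\sum_{a_2\,(\mathrm{mod}\,p^c)}^{\ast}\left(\tfrac{a_2}{p}\right)e(na_2/p^c)$ with $p\nmid n$; writing $a_2=a+pb$ and summing over $b$ shows this vanishes for $c\ge3$, while for $c=1$ it is a Gau\ss{} sum of modulus $p^{1/2}$, yielding $|\widetilde C_{1,p^c,k,m}(\mathbf m)|\le p^{(r+1)c/2}$. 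I do not anticipate any real obstacle: the content is entirely the standard bookkeeping of quadratic Gau\ss{} and Ramanujan sums, and the two arithmetic conditions on $p$ are precisely those needed to make the completion of the square legitimate modulo $p^c$ and to guarantee that $Q_2^{\ast}(\mathbf m)$ is a $p$-adic unit.
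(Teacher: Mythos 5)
Your proposal is correct and takes the same route as the paper: the paper's entire proof is the observation that $\widetilde C_{1,q_2,k,m}(\mathbf m)=\mathscr Q_{q_2}(\mathbf m)$ followed by a citation of Lemma~15 of \cite{BM}, which is precisely your first paragraph. Your additional self-contained argument (completing the square, diagonalising $Q_2$ over $\mathbb{Z}_p$, and reducing the $a_2$-sum to a Ramanujan or Gau\ss{} sum according to the parity of $rc$) is a correct and careful reconstruction of what that cited lemma encodes, and in fact shows the stronger fact that the sum vanishes for $c\ge 2$ under the stated hypotheses.
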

In \cite{BM}, the authors also derive a bound for the sums $\mathscr{Q}_{q}(\mathbf{m})$ on average over $q$, in the deduction of which hybrid subconvexity of Dirichlet $L$-functions plays an essential role. Unfortunately the shape of our multiplicativity relation in Lemma \ref{multrel} makes bounds of this sort in our work unsuitable for our work. In fact it is this obstacle that makes our strategy fail for $n=9$ (i.e. $r=7$). When $n=10$ (i.e. $r=8$) this is overcome by the fact that the estimate in Lemma \ref{cq21} exhibits greater-than-square-root cancellation when $r$ is even.

We also record a weaker, general purpose estimate for the sums $\widetilde{C}_{1,q_2,k,m}(\mathbf{m})$, having a similar role to that of Lemma \ref{gencq1}.
\begin{lemma}
\label{badc1q}
We have
$$\widetilde{C}_{1,q_2,k,m}(\mathbf{m})\ll q_2^{r/2+1}\text{.}$$
\end{lemma}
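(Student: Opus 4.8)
The plan is to reduce to the case of prime-power modulus by means of the multiplicativity relation of Lemma \ref{multrel}, and then to estimate the resulting quadratic Gauss sum by summing trivially over the unit $a_2$ and invoking the general-purpose bound of Lemma \ref{genpur}.

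First I would specialise Lemma \ref{multrel} to $q_1=q_1'=q_1''=1$. Then $D_1'=\gcd(1,D)=D_1''=1$, so the character factors $\chi_{D_1'}(q_1'')$ and $\chi_{D_1''}(q_1')$ are both $1$, and the relation collapses to
$$\widetilde{C}_{1,q_2'q_2'',k,m}(\mathbf{m})=\widetilde{C}_{1,q_2',k,m}\!\left(\overline{q_2''}\mathbf{m}\right)\widetilde{C}_{1,q_2'',k,m}\!\left(\overline{q_2'}\mathbf{m}\right)$$
whenever $\gcd(q_2',q_2'')=1$. Since the prime-power bound produced below will be uniform in $\mathbf{m}$ and of the shape $p^{c(r/2+1)}$ times a local factor whose product over all primes converges, this reduces matters to the case $q_2=p^c$.

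For $q_2=p^c$ I would write
$$\widetilde{C}_{1,p^c,k,m}(\mathbf{m})=\underset{a_2\,(\mathrm{mod}\,p^c)}{\left.\sum\right.^{\ast}}\ \sum_{\mathbf{b}\,(\mathrm{mod}\,p^c)}e\!\left(\frac{a_2Q_2(\mathbf{b})+\mathbf{b}\cdot\mathbf{m}}{p^c}\right)\text{,}$$
and for each fixed $a_2$ treat the inner sum as a quadratic Gauss sum attached to the integer form $a_2Q_2$, whose integral Hessian is $a_2(2\mathbf{M}_2)$. Lemma \ref{genpur} then bounds the inner sum by $p^{rc/2}\sqrt{K_{p^c}(2a_2\mathbf{M}_2;\mathbf{0})}$, and since $\gcd(a_2,p)=1$ multiplication by $a_2$ is a bijection modulo $p^c$, so $K_{p^c}(2a_2\mathbf{M}_2;\mathbf{0})=K_{p^c}(2\mathbf{M}_2;\mathbf{0})$. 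Summing over the $\varphi(p^c)\le p^c$ admissible $a_2$ gives $|\widetilde{C}_{1,p^c,k,m}(\mathbf{m})|\le p^{c(r/2+1)}\sqrt{K_{p^c}(2\mathbf{M}_2;\mathbf{0})}$.

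It remains to bound $K_{p^c}(2\mathbf{M}_2;\mathbf{0})$ uniformly in $c$. Since $V=V(Q_1,Q_2)$ is smooth, $Q_2$ is non-singular, so $\Delta:=\det(2\mathbf{M}_2)\ne0$; writing the Smith normal form $2\mathbf{M}_2=U\,\mathrm{diag}(d_1,\ldots,d_r)\,V$ with $U,V\in\mathrm{GL}_r(\mathbb{Z})$ one gets $K_{p^c}(2\mathbf{M}_2;\mathbf{0})=\prod_i\gcd(d_i,p^c)\le\prod_ip^{v_p(d_i)}=|\Delta|_p^{-1}$, independently of $c$. Reassembling via the displayed multiplicativity relation (the unit twists of $\mathbf{m}$ being irrelevant as the prime-power bound is uniform in $\mathbf{m}$) and using the product formula yields
$$|\widetilde{C}_{1,q_2,k,m}(\mathbf{m})|\le q_2^{r/2+1}\prod_{p}|\Delta|_p^{-1/2}=|\Delta|^{1/2}\,q_2^{r/2+1}\ll q_2^{r/2+1}\text{,}$$
with implied constant depending only on $Q_2$. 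I do not expect a genuine obstacle here: the only points requiring care are the $c$-uniform bound on the divisor count $K_{p^c}(2\mathbf{M}_2;\mathbf{0})$ (where non-degeneracy of $Q_2$ is used) and the convergence of the product of local factors, both immediate from the product formula applied to the fixed nonzero integer $\det(2\mathbf{M}_2)$.
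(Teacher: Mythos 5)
Your proof is correct and follows essentially the same route as the paper's: sum trivially over $a_2$, invoke Lemma \ref{genpur} to bound the inner Gauss sum by $p^{rc/2}\sqrt{K_{p^c}(2\mathbf{M}_2;\mathbf{0})}$, observe this last factor is $1$ for all $p\nmid 2\det\mathbf{M}_2$ and bounded otherwise, and reassemble via the multiplicativity in Lemma \ref{multrel}. The only difference is that you spell out the Smith normal form computation where the paper just writes ``It is easily seen,'' which is a reasonable thing to make explicit.
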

\begin{proof}
It is easily seen that $K_{p^c}(2\mathbf{M}_2;\mathbf{0})$ is bounded independently of $p^c$. Moreover it equals $1$ when $p$ does not divide $2\det\mathbf{M}_2$. From summing trivially over $a_2$ and applying Lemma \ref{genpur}, it follows that $\widetilde{C}_{1,p^c,k,m}(\mathbf{m})/p^{c(r/2+1)}$ is bounded and the bound can be taken to be $1$ for all but finitely many primes $p$. The result now follows from Lemma \ref{multrel}.
\end{proof}

It remains to consider the sums of the form $\widetilde{C}_{p^a,p^b,k,m}(\mathbf{m})$, where $p$ is a prime and $a,b\geq1$ are integers. Here we are once again, in general, unable to follow the strategy used in \cite{Munshi} due to the presence of a Kloosterman sum modulo $p^a$. However, if $a\leq b$, it turns out that the strategy in \cite{Munshi}, building upon Lemma 26 in \cite{BM}, has a fundamental inefficiency, which, if exploited, gives as enough room to obtain a bound of the quality we need by simply summing trivially over $a_1$. In the case $a>b$, on the other hand, Munshi devises an alternative strategy (cf. proof of Lemma 9 in \cite{Munshi}) which can be easily adjusted to our setup.

\begin{lemma}
\label{mix}
Let $p$ be a prime, and let $a,b\geq1$. Then $\widetilde{C}_{p^a,p^b,k,m}(\mathbf{m})=0$ unless $p\mid Q_2^\ast(\mathbf{m})$, in which case we have
$$\widetilde{C}_{p^a,p^b,k,m}(\mathbf{m})\ll p^{(a+b)(r/2+1)}\text{.}$$
\end{lemma}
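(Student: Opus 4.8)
The plan is to reduce the bound on $\widetilde{C}_{p^a,p^b,k,m}(\mathbf{m})$ to a quadratic exponential sum counting problem and then handle the two ranges $a\leq b$ and $a>b$ separately, mirroring the dichotomy used by Munshi but exploiting the extra room that comes from summing trivially over $a_1$. First I would unravel the definition \eqref{c}: the sum over $\mathbf{b}$ modulo $p^aq_2=p^{a+b}$ is subject to $Q_2(\mathbf{b})\equiv0\,(\mathrm{mod}\,p^a)$, and I would detect this congruence either by orthogonality (introducing an auxiliary variable) or by a direct change of variables, so that the $\mathbf{b}$-sum becomes a quadratic Gauss-type sum to which Lemma \ref{genpur} (together with the observation, used already in the proof of Lemma \ref{badc1q}, that $K_{p^c}(2\mathbf{M}_2;\mathbf{0})$ is bounded uniformly in $p^c$) applies. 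The vanishing statement---that $\widetilde{C}_{p^a,p^b,k,m}(\mathbf{m})=0$ unless $p\mid Q_2^\ast(\mathbf{m})$---should come from the same source as in \cite{BM}: when $p\nmid 2\det\mathbf{M}_2$ and $p\nmid Q_2^\ast(\mathbf{m})$ the relevant Gauss sum in $\mathbf{b}$, after completing the square in the off-diagonal variables, forces a linear congruence on a coordinate that has no solutions, exactly as in Lemma \ref{goodc1q}; so the interesting case is $p\mid Q_2^\ast(\mathbf{m})$.

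In the range $a\leq b$, the strategy is to sum \emph{trivially} over $a_1$ (there are $\varphi(p^a)\ll p^a$ values), which is affordable precisely because we only need the bound $p^{(a+b)(r/2+1)}$ and not square-root cancellation in $a_1$. After fixing $a_1$, what remains is, up to the trivial factor $p^a$ and the sum over $a_2$ (another $\varphi(p^b)\ll p^b$ values handled trivially, or better, kept and used), a sum over $\mathbf{b}\,(\mathrm{mod}\,p^{a+b})$ of $e$ of a quadratic form in $\mathbf{b}$ plus a linear term, subject to $Q_2(\mathbf{b})\equiv0\,(\mathrm{mod}\,p^a)$. Here is where the inefficiency in the $\cite{BM}$/$\cite{Munshi}$ approach referred to in the text comes in: Lemma 26 of \cite{BM} loses a factor in this regime, and the point is that the honest count of such $\mathbf{b}$, estimated via Lemma \ref{genpur} applied to the full modulus $p^{a+b}$ after inserting the congruence, already beats what we need. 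Concretely I expect to get $|\widetilde{C}_{p^a,p^b,k,m}(\mathbf{m})|\ll p^a\cdot p^b\cdot p^{(a+b)r/2}\cdot(\text{bounded})$, which is the claimed bound; the condition $a\leq b$ ensures the congruence $Q_2(\mathbf{b})\equiv0\,(\mathrm{mod}\,p^a)$ is ``spread out'' enough relative to the modulus $p^{a+b}$ that no loss occurs.

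In the range $a>b$, I would follow the alternative device in the proof of Lemma 9 of \cite{Munshi}: split $\mathbf{b}$ according to its residue modulo $p^b$, so $\mathbf{b}=\mathbf{b}_0+p^b\mathbf{b}_1$ with $\mathbf{b}_0\,(\mathrm{mod}\,p^b)$ and $\mathbf{b}_1\,(\mathrm{mod}\,p^a)$, and expand $Q_1(\mathbf{b})$, $Q_2(\mathbf{b})$ accordingly. The $a_2$-sum, which has modulus $p^b$, then interacts only with $\mathbf{b}_0$ and part of the linear form, while the $\mathbf{b}_1$-sum over the finer modulus becomes linear (the quadratic terms in $\mathbf{b}_1$ carry a factor $p^{2b}$ and, modulo $p^{a+b}$, since $2b<a+b$, contribute genuine cancellation or collapse) and forces a linear congruence whose solution count is controlled; the leftover $\mathbf{b}_0$-sum is again quadratic and estimated by Lemma \ref{genpur}. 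Summing trivially over $a_1$ throughout, and bookkeeping the powers of $p$, should again land on $p^{(a+b)(r/2+1)}$. The main obstacle I anticipate is not any single estimate but the careful $p$-adic bookkeeping in the case $a>b$---keeping track of exactly which variables the completed-square manipulations are legal for (one must be careful about whether $p\mid 2$, i.e. $p=2$, which the paper sidesteps by its square-free/level conventions, and about the interaction of the $\overline{a_1}$ in the Kloosterman-type term with the change of variables), and verifying that the inefficiency exploited in the $a\leq b$ case is genuinely there and quantitatively sufficient. The vanishing-unless-$p\mid Q_2^\ast(\mathbf{m})$ claim, by contrast, I expect to be a routine consequence of the structure of the $\mathbf{b}$-Gauss sum as in \cite{BM}, Lemma 15.
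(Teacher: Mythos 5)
Your overall plan is aligned with the paper's in broad strokes (sum trivially over $a_1$, delegate the case $a>b$ to Munshi's Lemma 9, obtain the vanishing from a linear sub-sum), but the argument you sketch for the range $a\le b$ has a real gap. You propose to detect $Q_2(\mathbf{b})\equiv0\,(\mathrm{mod}\,p^a)$ by orthogonality and then apply Lemma \ref{genpur} directly at the full modulus $p^{a+b}$. After this insertion, the quadratic form in the $\mathbf{b}$-sum has matrix $\mathbf{N}=a_1p^b\mathbf{M}_1+(a_2+a_3p^b)\mathbf{M}_2$, and Lemma \ref{genpur} returns a bound $p^{(a+b)r/2}\sqrt{K_{p^{a+b}}(2\mathbf{N};\mathbf{0})}$. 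For the (finitely many, but crucial) primes with $p\mid 2\det\mathbf{M}_2$, the elementary-divisor structure of $2\mathbf{N}$ is not controlled uniformly in $a$ and $b$: $a_1p^b\mathbf{M}_1$ can conspire with $2\mathbf{M}_2$ so that $K_{p^{a+b}}(2\mathbf{N};\mathbf{0})$ grows like a positive power of $p^{a}$, and the ``(bounded)'' factor in your estimate is simply not bounded. The paper avoids this by a two-step decomposition $\mathbf{b}=\mathbf{k}+p^a\mathbf{x}$ and then $\mathbf{x}=\mathbf{y}+p^{b-a}\mathbf{z}$ (this is exactly where $a\le b$ is used): the $\mathbf{z}$-sum is linear and its vanishing forces $p^a\mid a_2\nabla Q_2(\mathbf{k})+\mathbf{m}$, while the surviving $\mathbf{y}$-sum has modulus $p^{b-a}$ and quadratic form $a_2Q_2$ \emph{alone}, so Lemma \ref{genpur} contributes only the genuinely bounded factor $K_{p^{b-a}}(2\mathbf{M}_2;\mathbf{0})=O_{\mathbf{M}_2}(1)$. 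The leftover counting over $\mathbf{k}$ and $a_2$ is packaged as $\sum_{a_2}K_{p^a}(2a_2\mathbf{M}_2;-\mathbf{m})\ll p^b$, and only then is the trivial $p^a$ from $a_1$ paid.

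Relatedly, your treatment of the vanishing statement is underspecified. You defer it to ``the structure of the $\mathbf{b}$-Gauss sum as in [BM], Lemma 15,'' but that lemma concerns the pure $q_1=1$ sums, and the mechanism here is different: once the linear $\mathbf{z}$-sum (for $a\le b$) or the linear $\mathbf{y}$-sum (for $a>b$) is executed, one obtains the congruence $a_2\nabla Q_2(\mathbf{k})+\mathbf{m}\equiv 0\,(\mathrm{mod}\,p^{\min(a,b)})$, and applying $Q_2^\ast$ to both sides together with $Q_2(\mathbf{k})\equiv0\,(\mathrm{mod}\,p^a)$ gives $p\mid Q_2^\ast(\mathbf{m})$. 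Your description of the $a>b$ case is essentially correct and matches the paper's delegation to the fifth display in the proof of Lemma 9 of \cite{Munshi}. But as written, the $a\le b$ estimate does not go through at the bad primes, and without the nested decomposition you also lack a clean source for the vanishing criterion.
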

\begin{proof}
We separate the proof into two cases, according to the relative sizes of $a$ and $b$.

We begin with the case $a\leq b$. Then we clearly have
$$\left|\widetilde{C}_{p^a,p^b,k,m}(\mathbf{m})\right|\leq p^a\underset{a_1\, (\mathrm{mod}\,p^a)}{\left.\max\right.^{\ast}}\left|\underset{a_2\, (\mathrm{mod}\,p^b)}{\left.\sum\right.^{\ast}}\underset{\substack{\mathbf{b}\, (\mathrm{mod}\,p^{a+b})\\Q_2(\mathbf{b})\equiv0\, (\mathrm{mod}\,p^a)}}{\left.\sum\right.}e\left(\frac{a_1Q_1(\mathbf{b})p^b+a_2Q_2(\mathbf{b})+\mathbf{b}\cdot\mathbf{m}}{p^{a+b}}\right)\right|\text{.}$$
It suffices, therefore, to prove that, for any primitive residue class $a_1\,(\mathrm{mod}\,p^a)$, we have
\begin{equation}
\label{neuesziel}\underset{a_2\, (\mathrm{mod}\,p^b)}{\left.\sum\right.^{\ast}}\underset{\substack{\mathbf{b}\, (\mathrm{mod}\,p^{a+b})\\Q_2(\mathbf{b})\equiv0\, (\mathrm{mod}\,p^a)}}{\left.\sum\right.}e\left(\frac{a_1Q_1(\mathbf{b})p^b+a_2Q_2(\mathbf{b})+\mathbf{b}\cdot\mathbf{m}}{p^{a+b}}\right)\ll p^{b+(a+b)r/2}\text{.}
\end{equation}
Denote by $S$ the left-hand side in \eqref{neuesziel}. We now perform the decomposition $\mathbf{b}=\mathbf{k}+p^a\mathbf{x}$, where $\mathbf{k}$ ranges modulo $p^a$ and $\mathbf{x}$ ranges modulo $p^b$. Since $Q_2(\mathbf{k}+p^a\mathbf{x})=Q_2(\mathbf{k})+p^a\nabla Q_2(\mathbf{k})\cdot\mathbf{x}+p^{2a}Q_2(\mathbf{x})$, and similarly for $Q_1$, we have
$$S=\underset{\substack{\mathbf{k}\, (\mathrm{mod}\,p^{a})\\Q_2(\mathbf{k})\equiv0\, (\mathrm{mod}\,p^a)}}{\left.\sum\right.}S(\mathbf{k})\text{,}$$
where
\begin{align*}
&S(\mathbf{k})\\
&=\underset{a_2\, (\mathrm{mod}\,p^b)}{\left.\sum\right.^{\ast}}e\left(\frac{a_1Q_1(\mathbf{k})p^b+a_2Q_2(\mathbf{k})+\mathbf{m}\cdot\mathbf{k}}{p^{a+b}}\right)\underset{\substack{\mathbf{x}\, (\mathrm{mod}\,p^{b})}}{\left.\sum\right.}e\left(\frac{a_2Q_2(\mathbf{x})p^a+a_2\nabla Q_2(\mathbf{k})\cdot\mathbf{x}+\mathbf{m}\cdot\mathbf{x}}{p^b}\right)\text{.}
\end{align*}
We now further decompose $\mathbf{x}=\mathbf{y}+p^{b-a}\mathbf{z}$, where $\mathbf{y}$ ranges modulo $p^{b-a}$ and $\mathbf{z}$ ranges modulo $p^a$, obtaining that the inner sum above equals
$$\underset{\substack{\mathbf{y}\, (\mathrm{mod}\,p^{b-a})}}{\left.\sum\right.}e\left(\frac{a_2Q_2(\mathbf{y})p^a+a_2\nabla Q_2(\mathbf{k})\cdot\mathbf{y}+\mathbf{m}\cdot\mathbf{y}}{p^b}\right)\underset{\substack{\mathbf{z}\, (\mathrm{mod}\,p^{a})}}{\left.\sum\right.}e\left(\frac{a_2\nabla Q_2(\mathbf{k})\cdot\mathbf{z}+\mathbf{m}\cdot\mathbf{z}}{p^a}\right)\text{.}$$
The sum over $\mathbf{z}$ vanishes unless
\begin{equation}\label{0modpa}
p^a\mid a_2\nabla Q_2(\mathbf{k})+\mathbf{m}\text{,}
\end{equation}
in which case it equals $p^{ra}$. Note that if \eqref{0modpa} holds, then
$$Q_2^\ast(\mathbf{m})\equiv a_2^2Q_2^\ast(\nabla Q_2(\mathbf{k}))\equiv 4a_2^2(\det\mathbf{M}_2)^2Q_2(\mathbf{k})\equiv0\,(\mathrm{mod}\,p^a)\text{,}$$
establishing (assuming $a\leq b$) the first part of the Lemma. Now denote by $A(\mathbf{k})$ the set of those $a_2$ for which \eqref{0modpa} holds. If $a_2\nabla Q_2(\mathbf{k})+\mathbf{m}=p^a\mathbf{v}$, then we obtain
$$S(\mathbf{k})=p^{ra}\sum_{a_2\in A(\mathbf{k})}e\left(\frac{a_1Q_1(\mathbf{k})p^b+a_2Q_2(\mathbf{k})+\mathbf{m}\cdot\mathbf{k}}{p^{a+b}}\right)\underset{\substack{\mathbf{y}\, (\mathrm{mod}\,p^{b-a})}}{\left.\sum\right.}e\left(\frac{a_2Q_2(\mathbf{y})+\mathbf{v}\cdot\mathbf{y}}{p^{b-a}}\right)\text{.}$$
Using Lemma \ref{genpur}, together with the observation that $K_{p^{b-a}}(2\mathbf{M}_2;\mathbf{0})=O_{\mathbf{M}_2}(1)$, we obtain
$$|S(\mathbf{k})|\ll p^{ra+r(b-a)/2}\#A(\mathbf{k})\text{.}$$
We therefore conclude that
\begin{align*}|S|&\leq p^{r(a+b)/2}\underset{\substack{\mathbf{k}\, (\mathrm{mod}\,p^{a})\\Q_2(\mathbf{k})\equiv0\, (\mathrm{mod}\,p^a)}}{\left.\sum\right.}\#A(\mathbf{k})\\
&\leq p^{r(a+b)/2}\underset{\substack{\mathbf{k}\, (\mathrm{mod}\,p^{a})}}{\left.\sum\right.}\#A(\mathbf{k})\\
&=p^{r(a+b)/2}\underset{a_2\, (\mathrm{mod}\,p^b)}{\left.\sum\right.^{\ast}}K_{p^a}(2a_2\mathbf{M}_2;-\mathbf{m})\text{.}
\end{align*}
Each summand on the right hand side is $O_{\mathbf{M}_2}(1)$, and \eqref{neuesziel} follows.

We now attack the case $a>b$. This case can be done by essentially following the proof of Lemma 9 in \cite{Munshi}; since the sum over $a_1$ is treated trivially in that proof, the occurrence of the term involving $\overline{a_1}$ makes little difference. Indeed, by writing $\mathbf{b}=\mathbf{x}+p^b\mathbf{y}$ in the definition of $\widetilde{C}_{p^a,p^b,k,m}$ and executing the linear exponential sum over $\mathbf{y}$, we obtain that $\widetilde{C}_{p^a,p^b,k,m}(\mathbf{m})$ equals
$$p^{rb}\underset{a_1\,(\mathrm{mod}\,p^a)}{\left.\sum\right.^\ast}\underset{a_2\,(\mathrm{mod}\,p^b)}{\left.\sum\right.^\ast}\underset{\substack{\mathbf{x}\,(\mathrm{mod}\,p^{a+b})\\Q_2(\mathbf{x})\equiv0\,(\mathrm{mod}\,p^a)\\a_2\nabla Q_2(\mathbf{x})+\mathbf{m}\equiv0\,(\mathrm{mod}\,p^b)}}{\left.\sum\right.}\chi_{D_1}(a_1)e\left(\frac{(a_1Q_1(\mathbf{x})+\overline{a_1}m\overline{k})p^b+a_2Q_2(\mathbf{x})+\mathbf{x}\cdot\mathbf{m}}{p^{a+b}}\right)\text{.}$$
We now detect the condition $Q_2(\mathbf{x})\equiv0\,(\mathrm{mod}\,p^a)$ using orthogonality of additive characters modulo $p^b$, introducing a sum over $a_3\,(\mathrm{mod}\,p^a)$. Noting that when $a_2$ ranges over primitive residue classes modulo $p^a$ and $a_3$ ranges over residue classes modulo $p^b$ the sum $a_2+p^aa_3$ ranges over primitive residue classes modulo $p^{a+b}$, we obtain that $\widetilde{C}_{p^a,p^b,k,m}(\mathbf{m})$ equals
$$p^{rb-a}\underset{a_1\,(\mathrm{mod}\,p^a)}{\left.\sum\right.^\ast}\underset{a_2\,(\mathrm{mod}\,p^{a+b})}{\left.\sum\right.^\ast}\underset{\substack{\mathbf{x}\,(\mathrm{mod}\,p^{a+b})\\a_2\nabla Q_2(\mathbf{x})+\mathbf{m}\equiv0\,(\mathrm{mod}\,p^b)}}{\left.\sum\right.}\chi_{D_1}(a_1)e\left(\frac{(a_1Q_1(\mathbf{x})+\overline{a_1}m\overline{k})p^b+a_2Q_2(\mathbf{x})+\mathbf{x}\cdot\mathbf{m}}{p^{a+b}}\right)\text{.}$$
It follows that
\begin{align*}
&\left|\widetilde{C}_{p^a,p^b,k,m}(\mathbf{m})\right|\\
&\leq p^{rb-a}\underset{a_1\,(\mathrm{mod}\,p^a)}{\left.\sum\right.^\ast}\left|\underset{a_2\,(\mathrm{mod}\,p^{a+b})}{\left.\sum\right.^\ast}\underset{\substack{\mathbf{x}\,(\mathrm{mod}\,p^{a+b})\\a_2\nabla Q_2(\mathbf{x})+\mathbf{m}\equiv0\,(\mathrm{mod}\,p^b)}}{\left.\sum\right.}e\left(\frac{a_1p^bQ_1(\mathbf{x})+a_2Q_2(\mathbf{x})+\mathbf{x}\cdot\mathbf{m}}{p^{a+b}}\right)\right|\text{.}
\end{align*}
The sum above is estimated in the proof of Lemma 9 in \cite{Munshi} (it is the fifth display in the aforementioned proof), and the bound is the one we claim, so this finishes the proof.
\end{proof}

\section{Exponential integrals}
\label{expints}
In this section we estimate the integrals $\widetilde{I}_{q_1,q_2,k,m}(\mathbf{m})$, which, taking into account our remarks at the end of Section \S\ref{acti}, it will suffice to do under the assumption that $B^{1-\varepsilon}\ll q_1\ll B$ (this will be implicitly assumed onwards). It turns out that we shall be able to reuse some of the work in Section 4 of \cite{Munshi}.

We will use the notation (also present in \cite{Munshi})
$$r_1=q_1/B,\quad r_2=q_2/\sqrt{B},\quad\mathbf{u}=B\mathbf{m}/q_1q_2\text{.}$$
This way we have
$$r_1r_2\widetilde{I}_{q_1,q_2,k,m}(\mathbf{m})=\widetilde{I}_{k,m}(\mathbf{u})\text{,}$$
where we set
$$\widetilde{I}_{k,m}(\mathbf{u})=\int_{\mathbb{R}^r}\widetilde{f}_{1,k,m}(Q_1(\mathbf{y}))f_2(r_1^{-1}Q_2(\mathbf{y}))U(r_2^{-1}Q_2(\mathbf{y}))w(\mathbf{y})e(-\mathbf{u}\cdot\mathbf{y})d\mathbf{y}\text{,}$$
and, in analogy with Section 4 of \cite{Munshi}, we set
$$f_i(v)=f(r_i,v)=r_ih(r_i,v)$$
and
$$\widetilde{f}_{1,k,m}(v)=\widetilde{f}_{k,m}(r_1,v)=r_1\widetilde{h}_{k,m}(v)\text{.}$$
Note that, similarly, Section 4 of \cite{Munshi} features the integral
$$I(\mathbf{u})=\int_{\mathbb{R}^r}f_1(Q_1(\mathbf{y}))f_2(r_1^{-1}Q_2(\mathbf{y}))U(r_1^{-1}Q_2(\mathbf{y}))w(\mathbf{y})e(-\mathbf{u}\cdot\mathbf{y})d\mathbf{y}\text{.}$$
The integral $I(\mathbf{u})$ is bounded in \cite{Munshi} (Lemma 11) as follows. Consider a compactly supported smooth weight $S$ that evaluates to $1$ at an interval centered at the origin and containing $Q_1(\mathrm{supp}(w))$. We then consider the Fourier transforms of $Sf_1$ and $Uf_2$, i.e., we consider
$$p_1(t)=\int_\mathbb{R} S(v)f_1(v)e(-tv)dv$$
and
$$p_2(t)=\int_\mathbb{R} U(v)f_2(v)e(-tv)dv\text{.}$$
The properties of $h$ enumerated in Lemma \ref{h} are used to establish that
\begin{equation}
\label{pt}
p_i(t)\ll_Nr_i(r_i|t|)^{-N}\quad\text{ for }i\in\{1,2\}\text{.}
\end{equation}
The Fourier inversion formula then allows one to write
\begin{align}
|I(\mathbf{u})|&=\left|\int_{\mathbb{R}^2}p_1(t_1)p_2(t_2)\int_{\mathbb{R}^r}w(\mathbf{y})e(t_1Q_1(\mathbf{y})+t_2r_1^{-1}Q_2(\mathbf{y})-\mathbf{u}\cdot\mathbf{y})d\mathbf{y}dt_1dt_2\right|\label{munshiform}\\
&\leq\int_{\mathbb{R}^2}|p_1(t_1)p_2(t_2)|\left|\int_{\mathbb{R}^r}w(\mathbf{y})e(t_1Q_1(\mathbf{y})+t_2r_1^{-1}Q_2(\mathbf{y})-\mathbf{u}\cdot\mathbf{y})d\mathbf{y}\right|dt_1dt_2\nonumber\text{.}
\end{align}
The inner integral is then bounded using the general purpose tools for bounding exponential integrals developed in Section 5 of \cite{Heath-Brown}.

In order to recycle the work done in the proof of Lemma 11 in \cite{Munshi}, we will show that $\widetilde{I}_{k,m}(\mathbf{u})$ admits an expression similar to the above, in which $p_1$ and $p_2$ are replaced by other functions satisfying \eqref{pt}. Then the conclusion of Lemma 11 will also hold for $\widetilde{I}_{k,m}$. This is the content of the following result.

\begin{lemma}
\label{realib}
We have
$$\widetilde{I}_{q_1,q_2,k,m}(\mathbf{m})\ll_{k,\varepsilon}\frac{(q_1q_2)^{(r/2-1)}}{B^{(r-3)/2-\varepsilon}|\mathbf{m}|^{r/2}}\text{.}$$
\end{lemma}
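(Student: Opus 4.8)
The plan is to deduce the estimate from \cite[Lemma 11]{Munshi}. First I would invoke the scaling identity $r_1r_2\,\widetilde{I}_{q_1,q_2,k,m}(\mathbf{m})=\widetilde{I}_{k,m}(\mathbf{u})$ recorded above; since $r_1r_2=q_1q_2B^{-3/2}$ and $|\mathbf{u}|=B|\mathbf{m}|(q_1q_2)^{-1}$, a one-line computation turns the asserted bound into the \emph{equivalent} statement
\[
\widetilde{I}_{k,m}(\mathbf{u})\ll_{k,\varepsilon}B^{\varepsilon}|\mathbf{u}|^{-r/2},
\]
which is exactly the bound obtained for $I(\mathbf{u})$ in \cite[Lemma 11]{Munshi}. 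Now $\widetilde{I}_{k,m}$ is obtained from $I$ by replacing the factor $f_1$ with $\widetilde{f}_{1,k,m}$, and in the proof of \cite[Lemma 11]{Munshi} the function $f_1$ is used \emph{only} through the Fourier-decay estimate \eqref{pt} for $p_1$ — the subsequent Fourier inversion \eqref{munshiform} and the estimation of the resulting oscillatory $\mathbf{y}$-integral via the tools of \cite{Heath-Brown} being indifferent to the precise shape of $f_1$. So the whole problem reduces to establishing the analogue of \eqref{pt} for
\[
\widetilde{p}_1(t):=\int_{\mathbb{R}}S(v)\widetilde{f}_{1,k,m}(v)e(-tv)\,dv,
\]
namely $\widetilde{p}_1(t)\ll_{N}r_1(r_1|t|)^{-N}$ for all $N$ (a loss of $B^{\varepsilon}$ here being harmless, though in fact none is needed); the factors coming from $Q_2$ reproduce those of \cite{Munshi} up to at most a trivial modification, so $p_2$ needs no essentially new treatment.

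The device for handling $\widetilde{p}_1$ is the two-dimensional Fourier representation of $\widetilde{h}_{k,m}$ furnished by Proposition \ref{tri}, already used in the proof of Lemma \ref{negl}: for any $\mathbf{z}\in\mathbb{R}^2$ with $|\mathbf{z}|^2=4m/(kr_1^2)$ one has $\widetilde{f}_{1,k,m}(v)=\tfrac{1}{\pi}\int_{\mathbb{R}^2}V(|\mathbf{x}|^2)f_1(v-|\mathbf{x}|^2)e(-\mathbf{z}\cdot\mathbf{x})\,d\mathbf{x}$, with $f_1=r_1h(r_1,\cdot)$. Substituting this into the definition of $\widetilde{p}_1$, interchanging the order of integration, and shifting $v\mapsto v+|\mathbf{x}|^2$ gives
\[
\widetilde{p}_1(t)=\frac{1}{\pi}\int_{\mathbb{R}^2}V(|\mathbf{x}|^2)\,e(-\mathbf{z}\cdot\mathbf{x})\,e(-t|\mathbf{x}|^2)\Bigl(\int_{\mathbb{R}}S(v+|\mathbf{x}|^2)f_1(v)e(-tv)\,dv\Bigr)d\mathbf{x}.
\]
The inner integral is precisely the quantity $p_1(t)$ of \cite{Munshi} with the smooth cutoff $S$ replaced by the shifted cutoff $v\mapsto S(v+|\mathbf{x}|^2)$, which is again smooth, compactly supported, and has all derivatives bounded uniformly over the compact range of $\mathbf{x}$ on which $V(|\cdot|^2)$ is supported. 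Consequently the integration-by-parts argument that establishes \eqref{pt} — which uses only Lemma \ref{h} together with the boundedness of the cutoff and its derivatives — yields $\int_{\mathbb{R}}S(v+|\mathbf{x}|^2)f_1(v)e(-tv)\,dv\ll_{N}r_1(r_1|t|)^{-N}$ uniformly in $\mathbf{x}$. Bounding the remaining oscillatory factors by $1$ and integrating over the compact support of $V(|\cdot|^2)$ then gives $\widetilde{p}_1(t)\ll_{N}r_1(r_1|t|)^{-N}$, as required.

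With the analogue of \eqref{pt} in hand I would rerun the proof of \cite[Lemma 11]{Munshi} essentially verbatim: inside $\widetilde{I}_{k,m}(\mathbf{u})$ replace $\widetilde{f}_{1,k,m}(Q_1(\mathbf{y}))$ by $S(Q_1(\mathbf{y}))\widetilde{f}_{1,k,m}(Q_1(\mathbf{y}))$ (legitimate since $S\equiv1$ on $Q_1(\mathrm{supp}(w))$ and $w$ already restricts the integrand to that range), expand by Fourier inversion as in \eqref{munshiform} with $p_1$ replaced by $\widetilde{p}_1$, and bound the inner oscillatory $\mathbf{y}$-integral by the exponential-integral estimates of \cite{Heath-Brown} used there; since $\widetilde{p}_1$ and $p_2$ both obey \eqref{pt}, this produces $\widetilde{I}_{k,m}(\mathbf{u})\ll_{k,\varepsilon}B^{\varepsilon}|\mathbf{u}|^{-r/2}$, and undoing the scaling completes the proof. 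I expect the passage to the two-dimensional Fourier integral to be the crux: it is exactly what shows that convolving $f_1$ against the Bessel kernel $J_0$ in \eqref{htilde} does not damage the decay \eqref{pt}, whereas attempting to control $\widehat{S\widetilde{f}_{1,k,m}}$ head-on through the oscillatory $J_0$ would be appreciably more delicate. Everything else is bookkeeping and quotation from \cite{Munshi}.
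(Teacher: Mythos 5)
Your proof is correct and follows the same overall architecture as the paper's (reduce to the bound $\widetilde{I}_{k,m}(\mathbf{u})\ll B^{\varepsilon}|\mathbf{u}|^{-r/2}$, express $\widetilde{I}_{k,m}(\mathbf{u})$ via Fourier inversion as in \eqref{newexp}, show the modified factor $\widetilde{p}_1$ satisfies \eqref{pt}, then quote Munshi's Lemma 11). Where you diverge is in \emph{how} you establish the decay $\widetilde{p}_1(t)\ll_N r_1(r_1|t|)^{-N}$. You expand $\widetilde{f}_{1,k,m}$ through the two-dimensional Fourier representation of $J_0$ given by Proposition \ref{tri}, interchange integrals, shift $v\mapsto v+|\mathbf{x}|^2$, and bound the resulting inner integral by rerunning the integration-by-parts argument uniformly over the compact support of $V(|\cdot|^2)$. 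The paper instead chooses $\widetilde{p}_{1,k,m}$ as the Fourier transform of the convolution $Sf_1\ast VJ_0(4\pi r_1^{-1}\sqrt{m\star/k})$ (which agrees with $\widetilde{f}_{1,k,m}$ on $Q_1(\mathrm{supp}(w))$) and then simply applies the convolution theorem: this factorizes $\widetilde{p}_{1,k,m}(t)=p_1(t)\cdot\widehat{VJ_0(\cdots)}(t)$, and the second factor is trivially $O(1)$, giving the decay in one line. Your route is more hands-on and makes visible \emph{why} convolving against the Bessel kernel is harmless (the $\mathbf{z}$-oscillation is merely bounded away, and the shifted cutoff has uniformly bounded derivatives), whereas the paper's observation is shorter and avoids re-tracing the integration-by-parts. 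Both are sound; the paper's is the slicker bookkeeping, yours is arguably the more illuminating one and ties more directly into the machinery already used in Lemma \ref{negl}.
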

\begin{proof}
The statement is equivalent to the assertion that
$$\widetilde{I}_{k,m}(\mathbf{u})\ll_{k,\varepsilon}|\mathbf{u}|^{-r/2}B^\varepsilon\text{.}$$
This is precisely the bound that is obtained in Lemma 11 of \cite{Munshi} for $I(\mathbf{u})$, working from \eqref{munshiform} and using \eqref{pt} as bounds for $p_1(t)$ and $p_2(t)$. Therefore, in light of the discussion above, it suffices to show that we can write
\begin{equation}
\label{newexp}\widetilde{I}_{k,m}(\mathbf{u})=\int_{\mathbb{R}^2}\widetilde{p}_{1,k,m}(t_1)p_2(t_2)\int_{\mathbb{R}^r}w(\mathbf{y})e(t_1Q_1(\mathbf{y})+t_2r_1^{-1}Q_2(\mathbf{y})-\mathbf{u}\cdot\mathbf{y})d\mathbf{y}dt_1dt_2
\end{equation}
for some $\widetilde{p}_{1,k,m}(t)$ satisfying $\widetilde{p}_{1,k,m}(t)\ll_N r_1(r_1|t|)^{-N}$. We define
$$\widetilde{p}_{1,k,m}(t)=\int_\mathbb{R}\left(Sf_1\ast VJ_0\left(\frac{4\pi}{r_1}\sqrt{\frac{m\star}{k}}\right)\right)(v)e(-tv)dv\text{,}$$
where $\ast$ denotes the usual convolution product of functions on $\mathbb{R}$. Given the conditions on $S$ and $V$, it is clear that the functions
$$Sf_1\ast VJ_0\left(\frac{4\pi}{r_1}\sqrt{\frac{m\star}{k}}\right)\text{ and }f_1\ast VJ_0\left(\frac{4\pi}{r_1}\sqrt{\frac{m\star}{k}}\right)=\widetilde{f}_{1,k,m}$$
coincide on $Q_1(\mathrm{supp}(w))$, and therefore, by the Fourier inversion formula, we have \eqref{newexp}. It remains to show that $\widetilde{p}_{1,k,m}(t)$ satisfies the bound \eqref{pt}. For this we remark that, by a standard property of the Fourier transform, the function $\widetilde{p}_{1,k,m}$, being defined as a Fourier transform of a convolution, is the product of the Fourier transforms of the functions the convolution of which has been taken. This yields
$$\widetilde{p}_{1,k,m}(t)=p_1(t)\int_\mathbb{R} V(u)J_0\left(\frac{4\pi}{r_1}\sqrt{\frac{mu}{k}}\right)e(-tu)du\text{.}$$
But the second factor is clearly $O(1)$, so from \eqref{pt} it follows that $\widetilde{p}_{1,k,m}(t)\ll_N r_1(r_1|t|)^{-N}$, as desired.
\end{proof}
We now prove a different bound that will imply that $\widetilde{I}_{q_1,q_2,k,m}(\mathbf{m})$ makes a negligible contribution to our analysis when $|\mathbf{m}|\gg B^{1/2+\varepsilon}$.
\begin{lemma}
\label{neg}
We have
$$\widetilde{I}_{q_1,q_2,k,m}(\mathbf{m})\ll_N\frac{B^2}{q_1q_2^2}\left(\frac{|\mathbf{m}|}{B^{1/2}}\right)^{-N}\text{.}$$
\end{lemma}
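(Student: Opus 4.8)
The goal is a bound on $\widetilde{I}_{q_1,q_2,k,m}(\mathbf{m})$ that beats the trivial estimate whenever $|\mathbf{m}| \gg B^{1/2+\varepsilon}$. Recall that $\widetilde{I}_{q_1,q_2,k,m}(\mathbf{m}) = (r_1 r_2)^{-1}\widetilde{I}_{k,m}(\mathbf{u})$ with $\mathbf{u} = B\mathbf{m}/(q_1 q_2)$, and that in our range $q_1 \asymp B$, $q_2 \ll \sqrt{B}$, so $|\mathbf{u}| \asymp |\mathbf{m}|/\sqrt{B}$ up to factors of size $B^\varepsilon$ coming from the precise size of $q_1$. The plan is to exploit repeated integration by parts in the $\mathbf{y}$-integral defining $\widetilde{I}_{k,m}(\mathbf{u})$, using the fact that the oscillatory factor $e(-\mathbf{u}\cdot\mathbf{y})$ has large frequency $|\mathbf{u}|$ while the remaining amplitude — namely $\widetilde{f}_{1,k,m}(Q_1(\mathbf{y}))\, f_2(r_1^{-1}Q_2(\mathbf{y}))\, U(r_2^{-1}Q_2(\mathbf{y}))\, w(\mathbf{y})$ — has controlled derivatives.

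\smallskip
First I would set up the integration by parts. On the support of $w$ we have $|\nabla Q_2(\mathbf{y})| \gg 1$ by hypothesis, but that is not quite what is needed: the phase is purely $-\mathbf{u}\cdot\mathbf{y}$, which is linear, so a single differentiation in a direction aligned with $\mathbf{u}$ brings down a factor $|\mathbf{u}|$ and costs us only a derivative of the amplitude. So choose coordinates so that $\mathbf{u}/|\mathbf{u}|$ is a coordinate direction, integrate by parts $N$ times in that variable, and estimate trivially. Each derivative hitting $w$ is $O(1)$; each derivative hitting $U(r_2^{-1}Q_2(\mathbf{y}))$ costs $O(r_2^{-1}) = O(\sqrt{B}/q_2)$; each derivative hitting $f_2(r_1^{-1}Q_2(\mathbf{y})) = r_1 h(r_1, r_1^{-1}Q_2(\mathbf{y}))$ costs, by Lemma~\ref{h}(ii)/(iv), a factor $O(r_1^{-1} \cdot (r_1/r_1)) = O(r_1^{-1})$ — more precisely $h$ and its derivatives in the relevant variable are $\ll r_1^{-1}$ times powers of $r_1^{-1}$ coming from the chain rule; and each derivative hitting $\widetilde{f}_{1,k,m}(Q_1(\mathbf{y}))$ costs a factor governed by the $x$-derivatives of $\widetilde{h}_{k,m}$, which by the proof of Lemma~\ref{negl} (integration by parts against the Bessel kernel) are themselves well-controlled, the key point being that $m \leq B^\varepsilon$ so any $m$-dependence is harmless. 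Collecting, $N$ derivatives of the full amplitude cost at most $O_N((\sqrt{B}/q_2)^{?} \cdots)$; tracking the dominant contribution, which comes from $U(r_2^{-1}Q_2(\mathbf{y}))$ with cost $\sqrt{B}/q_2$ per derivative, together with the trivial $O(1)$ bound on the measure of the (compact, and further restricted by $U$ to $Q_2(\mathbf{y}) \ll r_2$) support, yields after dividing by $r_1 r_2 \asymp q_1 q_2 / B^{3/2}$ a bound of the shape $B^2/(q_1 q_2^2)$ for the "main" constant and a saving of $(|\mathbf{m}|/B^{1/2})^{-N}$ from the $|\mathbf{u}|^{-N}$ factor, since $(r_2|\mathbf{u}|)^{-1} \asymp (q_2/\sqrt{B}) \cdot (q_1 q_2)/(B|\mathbf{m}|) \asymp q_2^2/(B^{1/2}|\mathbf{m}|) \cdot (q_1/B)$, and $q_1/B \ll 1$, so each integration by parts genuinely gains a factor $\ll |\mathbf{m}|^{-1}B^{1/2}$.

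\smallskip
The main obstacle, as usual with this kind of estimate, is bookkeeping: one must verify that no derivative of the amplitude produces a factor large enough to overwhelm the gain $|\mathbf{u}|^{-1}$ from the oscillation. The genuinely delicate piece is the factor $\widetilde{f}_{1,k,m}(Q_1(\mathbf{y}))$, because $\widetilde{h}_{k,m}$ is itself an integral involving a Bessel function of argument $\propto m^{1/2}B/(k^{1/2}q_1) \cdot (\cdots)$, and naive differentiation could bring down powers of $m^{1/2}B/q_1 \asymp m^{1/2}$. But this is exactly the situation already handled in the proof of Lemma~\ref{negl}: by the representation via Proposition~\ref{tri} and integration by parts there, $\partial_x^j \widetilde{h}_{k,m}(q_1/B, \cdot) \ll_{j,k} (q_1/B)^{\text{stuff}}$ with only mild $m$-dependence, and since $m \leq B^\varepsilon$ any residual $m$-power is absorbed into $B^\varepsilon$. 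One should therefore either quote the relevant derivative bounds from that proof or re-derive the single inequality $\partial_x^j \widetilde{h}_{k,m}(x, y) \ll_{j,k} x^{-j} |\partial_x^{\leq j} h|$-type estimate, and then the rest is a clean $N$-fold integration by parts. A convenient way to organize this is to mirror the Fourier-analytic rewriting of Lemma~\ref{realib}: write $\widetilde{I}_{k,m}(\mathbf{u})$ as in \eqref{newexp}, so the $\mathbf{y}$-integral becomes $\int w(\mathbf{y}) e(t_1 Q_1(\mathbf{y}) + t_2 r_1^{-1} Q_2(\mathbf{y}) - \mathbf{u}\cdot\mathbf{y})\, d\mathbf{y}$ with $\widetilde p_{1,k,m}(t_1) p_2(t_2)$ decaying rapidly in $r_i|t_i|$; then for $|\mathbf{u}|$ large, repeated integration by parts in the direction of $\mathbf{u}$ against the stationary-phase-free linear term gives the $|\mathbf{u}|^{-N}$ saving directly, and the $t_i$-integrals converge by \eqref{pt}, producing the $B^2/(q_1 q_2^2)$ prefactor after reinstating $r_1 r_2$. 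I would present it this second way, since it reuses machinery already in place and keeps the derivative bookkeeping minimal.
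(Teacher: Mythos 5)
Your second (preferred) route is correct and differs from the paper's in a noteworthy but not fundamental way. The paper also starts from the rewriting \eqref{newexp}, but then applies Heath-Brown's non-stationary phase estimate (Lemma~10 of \cite{Heath-Brown}) to the inner $\mathbf{y}$-integral only on the range $\max\{|t_1|,|t_2|r_1^{-1}\}\ll|\mathbf{u}|$, where $\nabla(t_1Q_1+t_2r_1^{-1}Q_2-\mathbf{u}\cdot\mathbf{y})\gg|\mathbf{u}|$, and then handles the complementary range by a trivial bound plus a further split into three sub-regions $\mathcal{A},\mathcal{B},\mathcal{C}$ exploiting the decay in \eqref{pt}. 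Your proposal instead integrates by parts $N$ times against the linear phase $-\mathbf{u}\cdot\mathbf{y}$, carrying $e(t_1Q_1+t_2r_1^{-1}Q_2)$ as part of the amplitude; each derivative then costs $O(1+|t_1|+|t_2|r_1^{-1})$, and after pulling out $|\mathbf{u}|^{-N}$ you integrate these polynomial weights against $\widetilde{p}_{1,k,m}(t_1)p_2(t_2)$, which by \eqref{pt} gives $\int|\widetilde{p}_{1,k,m}(t_1)||p_2(t_2)|(1+|t_1|+|t_2|r_1^{-1})^N\,dt_1\,dt_2\ll(r_1r_2)^{-N}$. Since $r_1r_2|\mathbf{u}|=|\mathbf{m}|/\sqrt{B}$, this yields $\widetilde{I}_{k,m}(\mathbf{u})\ll(r_1r_2|\mathbf{u}|)^{-N}$, which after reinstating $r_1r_2$ is in fact marginally stronger than the claimed bound (the paper obtains the weaker $\widetilde{I}_{k,m}(\mathbf{u})\ll r_2^{-1}(r_1r_2|\mathbf{u}|)^{-N}$, but both suffice). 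Your presentation avoids the case split entirely; the two proofs are morally the same, as Heath-Brown's Lemma~10 is itself an integration-by-parts estimate. One caveat on phrasing: the phase $t_1Q_1+t_2r_1^{-1}Q_2-\mathbf{u}\cdot\mathbf{y}$ is \emph{not} ``stationary-phase-free'' in general (for $|t_1|\asymp|\mathbf{u}|$ the gradient can vanish); what your argument actually does is integrate by parts against the linear term alone and push the quadratic exponential into the amplitude, and you should say that explicitly. A separate minor slip: in your first paragraph the factor $\widetilde{f}_{1,k,m}(Q_1(\mathbf{y}))$ under $\mathbf{y}$-differentiation produces derivatives of $\widetilde{h}_{k,m}(x,y)$ in the \emph{second} variable $y$, not the first variable $x$; but since you ultimately discard that direct approach in favour of the Fourier rewriting, this does not affect the conclusion.
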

In the proof we will use the following estimate of Heath-Brown (\cite{Heath-Brown}, Lemma 10): let $f(\mathbf{x})$ be an infinitely differentiable real valued function defined on $\mathrm{supp}(w)$. Suppose there is a positive real number $\lambda$, and positive real numbers $A_2,A_3,\ldots$ such that, for every $\mathbf{x}\in\mathrm{supp}(w)$, we have
$$|\nabla f(\mathbf{x})|\geq\lambda$$
and
$$\left|\frac{\partial^{j_1+\cdots+j_n}f(\mathbf{x})}{\partial^{j_1}x_1\cdots\partial^{j_n}x_n}\right|\leq A_j\lambda\quad\text{whenever }j=j_1+\cdots+j_n\geq2\text{.}$$
Then for any $N>0$ we have
$$\int_{\mathbb{R}^r}w(\mathbf{x})e(f(\mathbf{x}))d\mathbf{x}\ll_{N,S,A_j}\lambda^{-N}\text{.}$$
\begin{proof}
The desired bound is equivalent to $\widetilde{I}_{k,m}(\mathbf{u})\ll r_2^{-1}(r_1r_2|\mathbf{u}|)^{-N}$. We work again from \eqref{newexp}. %We observe first that, for real numbers $a_1$, $a_2$ and $\mathbf{y}\in\mathrm{supp}(w)$, the following asymptotic holds:
%$$\left|a_1\nabla Q_1(\mathbf{y})+a_2\nabla Q_2(\mathbf{y})\right|\asymp\max\{|a_1|,|a_2|\}\text{.}$$
%To prove this, we may suppose that $(a_1,a_2)\neq(0,0)$, and we observe that the asymptotic to be proved does not change when we replace $a_j$ by $a_j/\max\{|a_1|,|a_2|\}$ for $j=1,2$. Therefore we may assume without loss of generality that $\max\{|a_1|,|a_2|\}=1$. Then the result follows from observing that the function
%$$(a_1,a_2,\mathbf{y})\mapsto\left|a_1\nabla Q_1(\mathbf{y})+a_2\nabla Q_2(\mathbf{y})\right|$$
%defined on $\{(a_1,a_2):\max\{|a_1|,|a_2|\}=1\}\times\mathrm{supp}(w)$ is a continuous function defined on a compact set, and as such it attains a maximum and a minimum value; the minimum value cannot be $0$, since for all $\mathbf{y}\in\mathrm{supp}(w)$ the vectors $\nabla Q_1(\mathbf{y})$ and $\nabla Q_2(\mathbf{y})$ are linearly independent.
We now apply Lemma 10 of \cite{Heath-Brown}, as stated above, to the inner integral in \eqref{newexp}, where we set
$$f(\mathbf{y})=t_1Q_1(\mathbf{y})+t_2r_1^{-1}Q_2(\mathbf{y})-\mathbf{u}\cdot\mathbf{y}\text{.}$$
We have
$$\nabla f(\mathbf{y})=t_1\nabla Q_1(\mathbf{y})+t_2r_1^{-1}\nabla Q_2(\mathbf{y})-\mathbf{u}$$
and hence $\nabla f(\mathbf{y})\gg|\mathbf{u}|$ if $\max\{|t_1|,|t_2|r_1^{-1}\}\ll|\mathbf{u}|$, for some appropriate values of the implied constants. The second order derivatives of $f$ are $O(\max\{|t_1|,|t_2|r_1^{-1}\})$, and the higher order derivatives of $f$ vanish. Hence, if $\max\{|t_1|,|t_2|r_1^{-1}\}\ll|\mathbf{u}|$, the previous estimate of Heath-Brown applies with $\lambda\gg|\mathbf{u}|$, and it follows that
$$\int_{\mathbb{R}^r}w(\mathbf{y})e(t_1Q_1(\mathbf{y})+t_2r_1^{-1}Q_2(\mathbf{y})-\mathbf{u}\cdot\mathbf{y})d\mathbf{y}\ll|\mathbf{u}|^{-N-2}\text{.}$$
Using $\widetilde{p}_{1,k,m}(t_1)\ll r_1$ and $p_2(t_2)\ll r_2$, the contribution to \eqref{newexp} of those $(t_1,t_2)$ with $\max\{|t_1|,|t_2|r_1^{-1}\}\ll|\mathbf{u}|$ is bounded by
$$r_1r_2\cdot |\mathbf{u}|\cdot r_1|\mathbf{u}|\cdot|\mathbf{u}|^{-N-2}=r_1^2r_2|\mathbf{u}|^{-N}\text{,}$$
which is satisfactory. For the remaining values of $t_1,t_2$, we will bound the inner integral trivially, yielding
$$\widetilde{I}_{k,m}(\mathbf{u})\ll_N r_1^2r_2|\mathbf{u}|^{-N}+\int_{\substack{(t_1,t_2)\in\mathbb{R}^2\\\max\{|t_1|,|t_2|r_1^{-1}\}\ll|\mathbf{u}|}}|\widetilde{p}_{1,k,m}(t_1)p_2(t_2)|dt_1dt_2\text{.}$$
We split the range of integration into three regions $\mathcal{A},\mathcal{B},\mathcal{C}$ given by
\begin{align*}
&\mathcal{A}=\{(t_1,t_2)\in\mathbb{R}^2:|t_1|\ll|\mathbf{u}|,|t_2|\gg r_1|\mathbf{u}\}|\text{,}\\
&\mathcal{B}=\{(t_1,t_2)\in\mathbb{R}^2:|t_1|\gg|\mathbf{u}|,|t_2|\ll r_1|\mathbf{u}\}|\text{,}\\
&\mathcal{C}=\{(t_1,t_2)\in\mathbb{R}^2:|t_1|\gg|\mathbf{u}|,|t_2|\gg r_1|\mathbf{u}\}|\text{.}
\end{align*}
For the integral over $\mathcal{A}$, we apply the bounds $\widetilde{p}_{1,k,m}(t_1)\ll r_1$ and $p_2(t_2)\ll r_2(r_2|t_2|)^{-N-2}$ to obtain
$$\int_\mathcal{A}|\widetilde{p}_{1,k,m}(t_1)p_2(t_2)|dt_1dt_2\ll r_2^{-1}(r_1r_2|\mathbf{u}|)^{-N}\text{.}$$
For the integral over $\mathcal{B}$, we apply the bounds $\widetilde{p}_{1,k,m}(t_1)\ll r_1(r_1|t_1|)^{-N-2}$ and $p_2(t_2)\ll r_2$ to obtain
$$\int_\mathcal{B}|\widetilde{p}_{1,k,m}(t_1)p_2(t_2)|dt_1dt_2\ll r_1^{-N}r_2|\mathbf{u}|^{-N}\ll r_2^{-1}(r_1r_2|\mathbf{u}|)^{-N}\text{,}$$
using that $r_1,r_2\ll 1$. Finally, for the integral over $\mathcal{C}$, we use the bounds $\widetilde{p}_{1,k,m}(t_1)\ll r_1(r_1|t_1|)^{-N}$ and $p_2(t_2)\ll r_2(r_2|t_2|)^{-2}$, yielding
$$\int_\mathcal{C}|\widetilde{p}_{1,k,m}(t_1)p_2(t_2)|dt_1dt_2\ll r_1^{-N}r_2^{-1}|\mathbf{u}|^{-N}\ll r_2^{-1}(r_1r_2|\mathbf{u}|)^{-N}\text{.}$$
Putting the three estimates together yields
$$\widetilde{I}_{k,m}(\mathbf{u})\ll r_1^2r_2|\mathbf{u}|^{-N}+r_2^{-1}(r_1r_2|\mathbf{u}|)^{-N}\ll r_2^{-1}(r_1r_2|\mathbf{u}|)^{-N}\text{,}$$
as desired.
\end{proof}

\section{Averaging Fourier coefficients of cusp forms}
\label{endgame}
Recall from \S\ref{acti} that the goal is to bound \eqref{todo}. For simplicity we will only provide details in the case when $r$ is even, with $r=8$ being the most interesting case. Using the assumption $q_1\gg B^{1-\varepsilon}$, the sum in \eqref{todo} is bounded by
$$B^{\varepsilon-1}\sum_{k\mid D}\sum_{\substack{B^{1-\varepsilon}\ll q_1\ll B\\(q_1,k)=1\\q_2\ll\sqrt{B}}}\frac{1}{(q_1q_2)^r}\sum_{\mathbf{m}\in\mathbb{Z}^r}\left|\widetilde{C}_{q_1,q_2,k,m}(\mathbf{m})\widetilde{I}_{q_1,q_2,k,m}(\mathbf{m})\right|\text{.}$$
With this in mind, it suffices to bound, for each $k\mid D$ and each $m\leq B^\varepsilon$, the sums
$$S_{k,m}=\sum_{\substack{q_1\ll B\\(q_1,k)=1\\q_2\ll\sqrt{B}}}\frac{1}{(q_1q_2)^r}\sum_{\mathbf{m}\in\mathbb{Z}^r\setminus\{\mathbf{0}\}}\left|\widetilde{C}_{q_1,q_2,k,m}(\mathbf{m})\widetilde{I}_{q_1,q_2,k,m}(\mathbf{m})\right|$$
and
$$T_{k,m}=\sum_{\substack{B^{1-\varepsilon}\ll q_1\ll B\\(q_1,k)=1\\q_2\ll\sqrt{B}}}\frac{1}{(q_1q_2)^r}\left|\widetilde{C}_{q_1,q_2,k,m}(\mathbf{0})\widetilde{I}_{q_1,q_2,k,m}(\mathbf{0})\right|\text{,}$$
with the discussion at the end of \S\ref{acti} implying that
\begin{equation}
\label{partbound}
\sum_{\substack{\mathbf{x}\in\mathbb{Z}^r\\Q_2(\mathbf{x})=0}}w\left(\frac{\mathbf{x}}{B}\right)\lambda(Q_1(\mathbf{x}))\ll B^{r-2+\varepsilon}\sum_{\substack{k\mid D\\m\leq B^{\varepsilon}}}(S_{k,m}+T_{k,m})\text{.}
\end{equation}
\begin{lemma}
\label{lemskm}
We have
$$S_{k,m}\ll B^{3/2-r/4+\varepsilon}\text{.}$$
\end{lemma}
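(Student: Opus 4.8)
The plan is to reduce $S_{k,m}$ to a form where the estimates established in the two preceding sections apply termwise, and then sum, splitting the $\mathbf m$-sum into a ``generic'' part and a part supported on two exceptional hypersurfaces. First I would truncate: by Lemma \ref{neg} the vectors with $|\mathbf m|\gg B^{1/2+\varepsilon}$ contribute $O_A(B^{-A})$ to $S_{k,m}$, and by Lemma \ref{negl}(i) so do the terms with $q_1\ll B^{1-\varepsilon}$ (there $\widetilde I_{q_1,q_2,k,m}(\mathbf m)=O_A(B^{-A})$); so one may assume $q_1\gg B^{1-\varepsilon}$ and $|\mathbf m|\ll B^{1/2+\varepsilon}$. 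On these ranges Lemma \ref{realib} bounds the integral by $(q_1q_2)^{r/2-1}B^{-(r-3)/2+\varepsilon}|\mathbf m|^{-r/2}$, so it suffices to prove
\begin{equation*}
\sum_{\substack{B^{1-\varepsilon}\ll q_1\ll B,\ q_2\ll\sqrt B\\(q_1,k)=1}}\ \ \sum_{0<|\mathbf m|\ll B^{1/2+\varepsilon}}\ \frac{|\widetilde C_{q_1,q_2,k,m}(\mathbf m)|}{(q_1q_2)^{r/2+1}\,|\mathbf m|^{r/2}}\ \ll_\varepsilon\ B^{r/4+\varepsilon}.
\end{equation*}

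For the exponential sum I would use Lemma \ref{multrel} to factor $|\widetilde C_{q_1,q_2,k,m}(\mathbf m)|$ into local pieces and bound each according to whether the prime is ``good'' for $\mathbf m$. For $p\nmid 2DmG(\mathbf m,-1)$ with $p\mid q_1$, $p\nmid q_2$, Lemma \ref{cpc1} kills the piece when the exponent of $p$ is $\ge2$ and Lemma \ref{cp1}(ii) bounds it by $p^{r/2}$ when the exponent is $1$; for $p\nmid 2\det\mathbf M_2\,Q_2^\ast(\mathbf m)$ with $p\mid q_2$, $p\nmid q_1$, Lemma \ref{goodc1q}(i) bounds it by $p^{br/2}$ (this is where evenness of $r$ is used, via the better-than-square-root cancellation of that estimate); and by Lemma \ref{mix} the whole sum vanishes unless every prime of $\gcd(q_1,q_2)$ divides $Q_2^\ast(\mathbf m)$. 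Combining these with Lemmas \ref{gencq1}, \ref{badc1q} and \ref{mix} for the remaining primes yields: if $G(\mathbf m,-1)\ne0\ne Q_2^\ast(\mathbf m)$ then $|\widetilde C_{q_1,q_2,k,m}(\mathbf m)|\ll_\varepsilon(q_1q_2)^{r/2+\varepsilon}\,\mathcal E(q_1,q_2,\mathbf m)$, where $\mathcal E$ is built only from the parts of $q_1,q_2$ supported on primes dividing $2Dm\,G(\mathbf m,-1)\,Q_2^\ast(\mathbf m)$ and on the primes of $\gcd(q_1,q_2)$, and the left side vanishes unless the ``good'' part of $q_1$ is squarefree; while for arbitrary $\mathbf m\ne\mathbf0$ one retains the weaker bound $|\widetilde C_{q_1,q_2,k,m}(\mathbf m)|\ll_\varepsilon(q_1q_2)^\varepsilon q_1^{(r+1)/2}q_2^{r/2+1}$ for $q_1$ squarefree (from Lemma \ref{cp1}(i) and Lemma \ref{badc1q}), the square-full part of $q_1$ being governed by Lemma \ref{gencq1}.

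The summation then splits into cases, the basic input being the divisor bound $\#\{d\mid N^\infty:d\le X\}\ll_\varepsilon B^\varepsilon$, valid whenever $N,X\ll B^{O(1)}$ and applicable since $2Dm\,G(\mathbf m,-1)\,Q_2^\ast(\mathbf m)\ll_\varepsilon B^{O(1)}$ on the truncated range. For the generic $\mathbf m$ one carries out the $q_1$- and $q_2$-sums: the factor $\mathcal E$ and the squarefree restriction cost only $B^\varepsilon$ by the divisor bound, leaving $\sum_{q_1,q_2}|\widetilde C_{q_1,q_2,k,m}(\mathbf m)|(q_1q_2)^{-r/2-1}\ll_\varepsilon B^\varepsilon$, and then $\sum_{0<|\mathbf m|\ll B^{1/2+\varepsilon}}|\mathbf m|^{-r/2}\ll B^{r/4+\varepsilon}$ closes this case. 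For $\mathbf m$ on $\{G(\mathbf m,-1)=0\}$ or on the nondegenerate quadric $\{Q_2^\ast(\mathbf m)=0\}$ one uses the weaker bound, and the $q$-sums then produce $B^{1/2+\varepsilon}$, respectively $B^{1+\varepsilon}$; but these loci are sparse — $\{G(\mathbf m,-1)=0\}$ carries $O(R^{r-1})$ lattice points of height $\asymp R$, and since $Q_2^\ast$ is an $r$-ary nondegenerate quadratic form with $r\ge5$ the quadric carries only $O(R^{r-2})$ — so $\sum_{\mathbf m}|\mathbf m|^{-r/2}$ over these loci is $O(B^{r/4-1/2+\varepsilon})$, respectively $O(B^{r/4-1+\varepsilon})$, and in each case the product is again $\ll B^{r/4+\varepsilon}$. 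Multiplying back by $B^{-(r-3)/2+\varepsilon}$ gives $S_{k,m}\ll_\varepsilon B^{3/2-r/4+\varepsilon}$.

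The main obstacle is that there is essentially no slack: the generic $\mathbf m$ already contribute at the exact order $B^{3/2-r/4+\varepsilon}$, and this is admissible only because $r$ is even, which lets Lemma \ref{goodc1q}(i) save a full extra $q_2^{1/2}$ over square-root cancellation (for odd $r$ the analogous local bound is only $p^{(r+1)b/2}$ and the generic contribution balloons to $B^{7/4-r/4+\varepsilon}$, failing to be $o(1)$ for $r\le7$ — this is precisely why $n=9$ is out of reach). Consequently the degenerate $\mathbf m$ — those on the two exceptional loci, and more generally those for which $G(\mathbf m,-1)$ or $Q_2^\ast(\mathbf m)$ acquires many prime divisors below $B$ — must be disposed of with literally no loss, which forces the use of the sharp lattice-point counts on the loci (in particular the gain of one full power for the quadric, which exactly offsets the extra $q_2$-loss incurred when every prime of $q_2$ divides $Q_2^\ast(\mathbf m)$), the barely-nontrivial bound $q_1^{(r+1)/2}$ of Lemma \ref{cp1}(i) on the squarefree part of $q_1$, and the uniform divisor bound to tame the resulting arithmetic sums.
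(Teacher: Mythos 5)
Your overall strategy follows the paper's: truncate $|\mathbf m|$ via Lemma \ref{neg}, apply Lemma \ref{realib}, factor the exponential sum via Lemma \ref{multrel}, invoke the local bounds of Section 4, and split the $\mathbf m$-sum into a generic part and parts supported on sparse exceptional loci, with the same final bookkeeping. However, there is a genuine gap in how you apply Lemmas \ref{cp1}(ii) and \ref{cpc1}. After the factorization of Lemma \ref{multrel}, the argument of the local piece $\widetilde C_{p^a,1,k,m}$ is not $\mathbf m$ but the twisted vector $\overline{q_2}\,\mathbf m$ (or $\overline{d_2 q_2}\,\mathbf m$ in the paper's $d_1,d_2$-parametrisation). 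Since $G$ is homogeneous of degree $4r-4$ in $(\mathbf m,y)$ jointly, $G(\overline{q_2}\mathbf m,-1)\equiv\overline{q_2}^{\,4r-4}G(\mathbf m,-q_2)\pmod p$, so the relevant non-vanishing condition is $p\nmid G(\mathbf m,-q_2)$, a quantity that varies with $q_2$, not $p\nmid G(\mathbf m,-1)$. (Note that this issue does not arise for $Q_2^\ast$, which depends only on $\mathbf m$, so your treatment of the quadric locus is fine.)

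This is not cosmetic. Your ``generic'' condition $G(\mathbf m,-1)\ne0$ does not make the divisor bound applicable, because $G(\mathbf m,-q_2)$ can vanish for some $q_2$ in range even when $G(\mathbf m,-1)\ne 0$, so $\mathcal E(q_1,q_2,\mathbf m)$ cannot be built from the primes dividing a fixed integer depending on $\mathbf m$ alone. Correspondingly, the exceptional locus is not the hypersurface $\{G(\mathbf m,-1)=0\}$ (for which the count $O(R^{r-1})$ is trivial) but the set of ``bad'' vectors $\mathbf m$ for which $G(\mathbf m,y)=0$ has an integer root $y$, and bounding those by $O(R^{r-1+\varepsilon})$ genuinely requires the Dimension Growth theorem of Salberger (as the paper invokes via \cite{BHBS}). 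Finally, even for bad $\mathbf m$ the estimate must distinguish the $O(B^\varepsilon)$ pairs $(d_2,q_2)$ with $G(\mathbf m,-d_2q_2)=0$ (where one falls back on a squarefree/squarefull decomposition of $q_1$ using Lemma \ref{cp1}(i)) from the remaining pairs (where the good bound still applies); your write-up collapses this distinction. Once $G(\mathbf m,-1)$ is replaced by $G(\mathbf m,-d_2q_2)$ throughout and the bad locus is correctly identified and counted, your outline converges to the paper's proof; your remarks on the role of even $r$ and the quadric's extra saving are accurate.
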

\begin{proof}
We first observe that, by Lemma \ref{neg}, we may effectively discard those values of $\mathbf{m}$ with $|\mathbf{m}|\geq B^{1/2+\varepsilon}$ in the sum defining $S_{k,m}$. We then apply Lemma \ref{realib} to the remaining values of $\mathbf{m}$, yielding
\begin{equation}
\label{skm}
S_{k,m}\ll B^{3/2-r/2+\varepsilon}\sum_{0<|\mathbf{m}|<B^{1/2+\varepsilon}}\frac{1}{|\mathbf{m}|^{r/2}}\sum_{\substack{q_1\ll B\\(q_1,k)=1\\q_2\ll\sqrt{B}}}\frac{\left|\widetilde{C}_{q_1,q_2,k,m}(\mathbf{m})\right|}{(q_1q_2)^{r/2+1}}\text{.}
\end{equation}
We now make a variable change from $q_1$ to $d_1q_1$ and from $q_2$ to $d_2q_2$, where $d_1\mid d_2^\infty$, $d_2\mid d_1^\infty$, and $(q_1,d_2q_2)=(q_2,d_1q_1)=1$. Using Lemma \ref{multrel}, the inner sum becomes
$$\sum_{\substack{d_1\mid d_2^\infty\\d_2\mid d_1^\infty\\d_1\ll B,\, d_2\ll\sqrt{B}\\q_2\ll\sqrt{B}/d_2,\,(q_2,d_1)=1}}\frac{\left|\widetilde{C}_{d_1,d_2,k,m}(\overline{q_2}\mathbf{m})\right|}{(d_1d_2)^{r/2+1}}\cdot\frac{\left|\widetilde{C}_{1,q_2}(\mathbf{m})\right|}{q_2^{r/2+1}}\sum_{\substack{q_1\ll B/d_1\\(q_1,d_2q_2)=1}}\frac{\left|\widetilde{C}_{q_1,1}(\overline{d_2q_2}\mathbf{m})\right|}{q_1^{r/2+1}}\text{.}$$
Combining Lemmas \ref{goodc1q}, \ref{badc1q} and \ref{mix} (and using again the multiplicativity relation from Lemma \ref{multrel}), one sees that the above is bounded by
\begin{equation}
\label{tobound}
\sum_{\substack{\mathrm{rad}(d_1)=\mathrm{rad}(d_2)\\\mathrm{rad}(d_1)\mid Q_2^\ast(\mathbf{m})\\d_1\ll B,\,d_2\ll\sqrt{B}}}\sum_{\substack{q_2\ll\sqrt{B}/d_2\\(q_2,d_1)=1}}\frac{(q_2,(2\det\mathbf{M}_2Q_2^\ast(\mathbf{m}))^\infty)}{q_2}\sum_{\substack{q_1\ll B/d_1\\(q_1,d_2q_2)=1}}\frac{\left|\widetilde{C}_{q_1,1}(\overline{d_2q_2}\mathbf{m})\right|}{q_1^{r/2+1}}\text{.}
\end{equation}
Say a vector $\mathbf{m}\in\mathbb{Z}^r$ is \emph{good} if the equation $G(\mathbf{m},y)=0$ does not have an integer solution $y$, and is \emph{bad} if it has such a solution. We will bound the sum in \eqref{tobound} in four cases.
\begin{enumerate}[label=(\roman*)]
\item Suppose first that $\mathbf{m}$ is good, and moreover $Q_2^\ast(\mathbf{m})\neq0$. When bounding the inner sum in \eqref{tobound}, we may use that $G(\mathbf{m},-d_2q_2\neq0)$, because $\mathbf{m}$ is good. Decomposing $q_1=q_1'q_1''$ where $q_1'\mid G(\mathbf{m},-d_2q_2)^\infty$ and $(q_1'',G(\mathbf{m},-d_2q_2))=1$, we obtain that the inner sum is bounded by
$$\sum_{\substack{q_1'\ll B/d_1\\q_1'\mid (DmG(\mathbf{m},-d_2q_2))^\infty\\(q_1',d_2q_2)=1}}\frac{\left|\widetilde{C}_{q_1',1,k,m}(\overline{d_2q_2}\mathbf{m})\right|}{(q_1')^{r/2+1}}\sum_{\substack{q_1''\ll B/d_1q_1'\\(q_1'',DmG(\mathbf{m},-d_2q_2)d_2q_2)=1}}\frac{\left|\widetilde{C}_{q_1'',1,k,m}(\overline{d_2q_2}\mathbf{m})\right|}{(q_1'')^{r/2+1}}\text{.}$$
The inner sum can now be bounded using Lemmas \ref{cp1} and \ref{cpc1}, yielding a bound
$$\sum_{q_1''\ll B/d_1q_1}\frac{1}{q_1''}\ll B^\varepsilon\text{.}$$
Up to a factor of $B^\varepsilon$, the inner sum in \eqref{tobound} is then bounded by
$$\sum_{\substack{q_1'\ll B/d_1\\q_1'\mid (DmG(\mathbf{m},-d_2q_2))^\infty\\(q_1',d_2q_2)=1}}\frac{\left|\widetilde{C}_{q_1',1,k,m}(\overline{d_2q_2}\mathbf{m})\right|}{(q_1')^{r/2+1}}\text{.}$$
In the above, each summand is $O(B^\varepsilon)$ by Lemma \ref{gencq1}, and the number of summands is also $O(B^\varepsilon)$. It follows that the inner sum in \eqref{tobound} is $O(B^\varepsilon)$. The outer summands also contribute $O(B^\varepsilon)$, under the assumption that $Q_2^\ast(\mathbf{m})\neq0$. It follows that \eqref{tobound} is $O(B^\varepsilon)$ in this case.
\item Suppose now that $\mathbf{m}$ is good, but $Q_2^\ast(\mathbf{m})=0$. The bound for the inner sum in \eqref{tobound} remains valid. The sum over $q_2$ now contributes $\sqrt{B}/d_2$. Summing over $d_2$, and accounting for the number of values of $d_1$'s that have the same radical as $d_2$, we see that \eqref{tobound} is $O(B^{1/2+\varepsilon})$ in this case.
\item Suppose now that $\mathbf{m}$ is bad, and $Q_2^\ast(\mathbf{m})\neq0$. Then the bound for the inner sum in \eqref{tobound} remains valid, \emph{except} when $-d_2q_2$ is one of the $O(1)$ many solutions to $G(\mathbf{m},y)=0$. Note that, by the divisor bound, there are $O(B^\varepsilon)$ pairs $(d_2,q_2)$ for which the bound used before for the inner sum does not hold. For such a pair we bound the inner sum as follows. We begin by writing $q_1'=uv$ where $u$ is square-free and $v$ is square-full. This yields the following expression for the inner sum:
$$\sum_{\substack{u\ll B/d_1\\(u,d_2q_2)=1\\u\text{ square-free}}}\frac{\left|\widetilde{C}_{u,1}(\overline{d_2q_2}\mathbf{m})\right|}{u^{r/2+1}}\sum_{\substack{v\ll B/d_1u\\(v,d_2q_2)=1\\v\text{ square-full}}}\frac{\left|\widetilde{C}_{v,1}(\overline{d_2q_2}\mathbf{m})\right|}{v^{r/2+1}}\text{.}$$
By Lemma \ref{gencq1} and the fact that there are $O(X^{1/2})$ square-full numbers up to $X$, the inner sum is bounded by $(B/d_1u)^{1/2+\varepsilon}$. Using Lemma \ref{cp1}(i), which together with Lemma \ref{multrel} implies that $\left|\widetilde{C}_{u,1}(\overline{d_2q_2}\mathbf{m})\right|/u^{r/2+1}\ll u^{-1/2}$ (here we are crucially using that $u$ is square-free), it follows that the inner sum in \eqref{tobound} is in this case $O(B^{1/2+\varepsilon})$. As in case (i), we now see that \eqref{tobound} is $O(B^{1/2+\varepsilon})$ in this case.
\item Finally, suppose that $\mathbf{m}$ is bad and $Q_2^\ast(\mathbf{m})=0$. As explained before, the reasoning in case (ii) carries through except for the $O(B^\varepsilon)$ many pairs $(d_2,q_2)$ for which $G(\mathbf{m},-d_2q_2)=0$. For such pairs, we see, as in case (iii), that the inner sum in \eqref{tobound} is $O(B^{1/2+\varepsilon})$. It follows that \eqref{tobound} is $O(B^{1/2+\varepsilon})$ in this case.
\end{enumerate}
We now put everything together in \eqref{skm}. In order to do that, we will need some bounds on the set of bad vectors $\mathbf{m}$ and the set of solutions to $Q_2^\ast(\mathbf{m})=0$. For the former, we use the Dimension Growth Conjecture, now a theorem of Salberger (for the case at hand, the version in \cite{BHBS} is sufficient), which implies that for any $X$ the number of $\mathbf{m},y$ satisfying $\max\{|\mathbf{m}|,|y|\}\leq X$ and $G(\mathbf{m},y)=0$ is $O(X^{r-1+\varepsilon})$. In particular, this easily implies that the number of bad vectors $|\mathbf{m}|$ with $|\mathbf{m}|\leq X$ is $O(X^{r-1+\varepsilon})$. For the latter, we may appeal, for example, to the estimates in \cite{Heath-Brown}, which imply that the number of solutions to $Q_2^\ast(\mathbf{m})=0$ with $|\mathbf{m}|\leq X$ is $O(X^{r-2})$.

We begin with the contribution of good vectors $\mathbf{m}$ with $Q_2^\ast(\mathbf{m})\neq0$ (i.e. those fitting in case (i) above). The contribution of those to the sum in \eqref{skm} is bounded by
$$B^\varepsilon\sum_{0<|\mathbf{m}|<B^{1/2+\varepsilon}}\frac{1}{|\mathbf{m}|^{r/2}}\ll B^{\varepsilon+r/4}\text{,}$$
by splitting the above sum into dyadic ranges, say. Vectors fitting into cases (ii), (iii) and (iv), which we will call \emph{special} below, on the other hand, comprise $O(X^{r-1+\varepsilon})$ of the vectors $\mathbf{m}$ with $|\mathbf{m}|\leq X$. So the contribution of vectors fitting into those cases to the sum in \eqref{skm} is bounded by
\begin{align*}
&B^{1/2+\varepsilon}\sum_{j\geq0}\sum_{\substack{\mathbf{m}\text{ special}\\2^{-j-1}B^{1/2+\varepsilon}\leq|\mathbf{m}|<2^{-j}B^{1/2+\varepsilon}}}\frac{1}{|\mathbf{m}|^{r/2}}\\
&\ll B^{1/2+\varepsilon}\sum_{j\geq0}\frac{\#\{\mathbf{m}\text{ special}:|\mathbf{m}|<2^{-j}B^{1/2+\varepsilon}\}}{2^{-jr}B^{r/4}}\\
&\ll B^{1/2+\varepsilon}\sum_{j\geq0}\frac{2^{-j(r-1)}B^{(r-1)/2}}{2^{-jr/2}B^{r/4}}\\
&\ll B^{1/2+\varepsilon}\cdot B^{r/4-1/2}\sum_{j\geq0}2^{-j(r/2-1)}\ll B^{\varepsilon+r/4}\text{.}
\end{align*}
The result follows from inserting these two bounds into \eqref{skm}.
\end{proof}
For $T_{k,m}$, a very naive estimation suffices.
\begin{lemma}
\label{lemtkm}
We have
$$T_{k,m}\ll B^{5/2-r/2+\varepsilon}\text{.}$$
\end{lemma}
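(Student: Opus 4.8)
The plan is to combine the crudest available bound for the exponential sum $\widetilde C_{q_1,q_2,k,m}(\mathbf 0)$ with a purely size-theoretic bound for the exponential integral $\widetilde I_{q_1,q_2,k,m}(\mathbf 0)$, and then to sum trivially; the only non-wasteful point is that $q_1$ is confined to $[B^{1-\varepsilon},B]$.

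For the exponential sum, the power-of-$p$ savings in Lemmas \ref{cp1}, \ref{cpc1}, \ref{goodc1q} (phrased via $p\nmid G(\mathbf m,-1)$ or $p\nmid Q_2^\ast(\mathbf m)$) are either inapplicable or vacuous at $\mathbf m=\mathbf 0$, so I would rely only on the general-purpose estimates. Writing $q_1=\prod_p p^{a_p}$, $q_2=\prod_p p^{b_p}$, the multiplicativity relation of Lemma \ref{multrel} keeps the $p$-parts of $q_1$ and $q_2$ grouped together and makes distinct primes independent, hence $|\widetilde C_{q_1,q_2,k,m}(\mathbf 0)|\ll\prod_{p\mid q_1q_2}|\widetilde C_{p^{a_p},p^{b_p},k,m}(\ast)|$, where each local factor is $\ll p^{a_p(r/2+1)}$ when $b_p=0$ (Lemma \ref{gencq1}), $\ll p^{b_p(r/2+1)}$ when $a_p=0$ (Lemma \ref{badc1q}), and $\ll p^{(a_p+b_p)(r/2+1)}$ when $a_p,b_p\ge1$ (Lemma \ref{mix}, whose vanishing hypothesis $p\mid Q_2^\ast(\mathbf 0)=0$ is automatic). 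Multiplying out and absorbing the $C^{\omega(q_1q_2)}\ll_\varepsilon(q_1q_2)^\varepsilon$ loss from the per-prime constants gives $\widetilde C_{q_1,q_2,k,m}(\mathbf 0)\ll_\varepsilon(q_1q_2)^{r/2+1+\varepsilon}$.

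For the exponential integral I would put every factor inside \eqref{i} in absolute value: Lemma \ref{h}(iv) with $N=0$ gives $\|h(q_2/\sqrt B,\cdot)\|_\infty\ll\sqrt B/q_2$, and the same estimate together with the compact support of $V$ gives $\|\widetilde h_{k,m}(q_1/B,\cdot)\|_\infty\ll B/q_1$; since $U$ and $w$ are bounded and $\mathrm{supp}(w)$ has bounded measure, $\widetilde I_{q_1,q_2,k,m}(\mathbf 0)\ll B^{3/2}/(q_1q_2)$. Feeding both bounds into the definition of $T_{k,m}$,
$$T_{k,m}\ll B^{3/2+\varepsilon}\sum_{\substack{B^{1-\varepsilon}\ll q_1\ll B\\q_2\ll\sqrt B}}(q_1q_2)^{-r/2+\varepsilon}\text{.}$$
Since $r\ge8$ the series $\sum_{q_2}q_2^{-r/2+\varepsilon}$ converges, while $\sum_{B^{1-\varepsilon}\ll q_1\ll B}q_1^{-r/2+\varepsilon}\ll(B^{1-\varepsilon})^{1-r/2+\varepsilon}\ll B^{1-r/2+O(\varepsilon)}$ exactly because only large $q_1$ contribute; multiplying through yields $T_{k,m}\ll B^{5/2-r/2+\varepsilon}$.

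No step presents a genuine obstacle. The only point requiring care is the first one — checking that the unusual shape of Lemma \ref{multrel} still permits reduction to prime-power moduli, and that $\mathbf m=\mathbf 0$ is precisely the case handled by the three unconditional bounds — and the single arithmetic input that actually matters is the decay $\sum_{q_1\gg B^{1-\varepsilon},\,q_1\ll B}q_1^{-r/2+\varepsilon}\ll B^{1-r/2+O(\varepsilon)}$, i.e. the fact (established in Section \ref{acti}) that only moduli $q_1$ of size $\asymp B$ survive; this is what turns the harmless-looking crude bounds above into a power saving.
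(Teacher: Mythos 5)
Your proposal is correct and follows essentially the same route as the paper: trivially bound the exponential sum as $(q_1q_2)^{r/2+1+\varepsilon}$ via Lemma \ref{multrel} combined with the general-purpose Lemmas \ref{gencq1}, \ref{badc1q}, \ref{mix}, bound the exponential integral as $B^{3/2}/(q_1q_2)$ by putting $h\ll x^{-1}$ and the bounded factors in absolute value, and then exploit the constraint $q_1\gg B^{1-\varepsilon}$ when summing. One minor inaccuracy not affecting the argument: Lemmas \ref{cp1}(ii) and \ref{cpc1} are not actually vacuous at $\mathbf{m}=\mathbf{0}$, since $G(\mathbf{0},-1)\neq0$ (the hyperplane $x=0$ intersects $V_{k,m}$ in the smooth variety $V$), so the condition $p\nmid G(\mathbf{0},-1)$ holds for all but finitely many $p$ — but as you say these sharper inputs are unnecessary here.
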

\begin{proof}
Using that $h(x,y)\ll x^{-1}$, as indicated for example in Lemma \ref{h}, it follows immediately that
$$\widetilde{I}_{q_1,q_2,k,m}(\mathbf{0})\ll\frac{B^{3/2}}{q_1q_2}\ll\frac{B^{1/2+\varepsilon}}{q_2}$$
if $B^{1-\varepsilon}\ll q_1\ll B^{\varepsilon}$. Since Lemmas \ref{multrel}, \ref{cp1}, \ref{cpc1}, \ref{gencq1}, \ref{goodc1q}, \ref{badc1q}, and \ref{mix} imply that, for any $q_1,q_2$,
$$\left|\widetilde{C}_{q_1,q_2,k,m}(\mathbf{0})\right|\ll(q_1q_2)^{r/2+1+\varepsilon}\text{,}$$
it follows that
\begin{align*}
T_{k,m}&\ll B^{1/2+\varepsilon}\sum_{\substack{B^{1-\varepsilon}\ll q_1\ll B\\q_2\ll B^{1/2}}}\frac{1}{q_1^{r/2-1}q_2^{r/2}}\\
&=B^{1/2+\varepsilon}\sum_{B^{1-\varepsilon}\ll q_1\ll B}\frac{1}{q_1^{r/2-1}}\sum_{q_2\ll B^{1/2}}\frac{1}{q_2^{r/2}}\\
&\ll B^{1/2+\varepsilon}\sum_{B^{1-\varepsilon}\ll q_1}\frac{1}{q_2^{r/2-1}}\\
&\ll B^{1/2-r/2+2+\varepsilon}\text{,}
\end{align*}
as desired.
\end{proof}
Theorem \ref{notmain} now follows easily.
\begin{proof}[Proof of Theorem \ref{notmain}]
The bound \eqref{partbound}, together with Lemma \ref{lemskm} and Lemma \ref{lemtkm}, implies that
$$\sum_{\substack{\mathbf{x}\in\mathbb{Z}^r\\Q_2(\mathbf{x})=0}}w\left(\frac{\mathbf{x}}{B}\right)\lambda(Q_1(\mathbf{x}))\ll B^{3r/4-1/2+\varepsilon}+B^{r/2+1/2+\varepsilon}\text{.}$$
If $r\geq 8$, then $3r/4-1/2<r-2$ and $r/2+1/2<r-2$, establishing the result.
\end{proof}

\section{End of the proof}
\label{realendgame}
We are now ready to prove Theorem \ref{main}. By \eqref{repnum}, it follows easily that, under the assumptions of Theorem \ref{main},
\begin{align*}
\sum_{\mathbf{x}}W\left(\frac{\mathbf{x}}{B}\right) &= \sum_{\substack{\mathbf{x}\in\mathbb{Z}^{n-2}\\Q_2(\mathbf{x})=0}} r_F(Q_1(\mathbf{x}))w\left(\frac{\mathbf{x}}{B}\right)\\
&= \sum_{\substack{\mathbf{x}\in\mathbb{Z}^{n-2}\\Q_2(\mathbf{x})=0}} \frac{w_K}{h_K} \sum_{\chi\in\widehat{C_K}} \sum_{\substack{\mathfrak{a}\unlhd\mathcal{O}_K\\N\mathfrak{a}=Q_1(\mathbf{x})}} \chi(\mathfrak{a})w\left(\frac{\mathbf{x}}{B}\right)\\
&= \frac{w_K}{h_K} \sum_{\chi\in\widehat{C_K}} \sum_{\substack{\mathbf{x}\in\mathbb{Z}^{n-2}\\Q_2(\mathbf{x})=0}} \left(\sum_{\substack{\mathfrak{a}\unlhd\mathcal{O}_K\\N\mathfrak{a}=Q_1(\mathbf{x})}} \chi(\mathfrak{a})\right)w\left(\frac{\mathbf{x}}{B}\right)
\end{align*}
Now Lemma \ref{four} and Theorem \ref{notmain} imply that the summand corresponding to $\chi$ is $O(B^{n-4-\delta})$ for some $\delta>0$ whenever $\ord(\chi)\geq3$. This is satisfactory. It remains to show that
$$ \frac{w_K}{h_K} \sum_{\substack{\chi\in\widehat{C_K}\\\ord(\chi)\leq2}} \sum_{\substack{\mathbf{x}\in\mathbb{Z}^{n-2}\\Q_2(\mathbf{x})=0}} \left(\sum_{\substack{\mathfrak{a}\unlhd\mathcal{O}_K\\N\mathfrak{a}=Q_1(\mathbf{x})}} \chi(\mathfrak{a})\right)w\left(\frac{\mathbf{x}}{B}\right)=\mathfrak{S}\mathfrak{J}B^{n-4}+O(B^{n-4-\delta})\text{.}$$
We now handle the contribution from the characters of order at most $2$, which will give the main term. In light of Proposition \ref{genus}, here the task is to prove that
\begin{equation}
\label{mt}\frac{2^{\mu-1}w_K}{h_K}\sum_{\substack{\mathbf{x}\in\mathbb{Z}^{n-2}\\Q_1(\mathbf{x})\text{ admissible}\\Q_2(\mathbf{x})=0}} \left(\sum_{d\mid Q_1(\mathbf{x})}\chi_D(d)\right)w\left(\frac{\mathbf{x}}{B}\right)=\mathfrak{S}\mathfrak{J}B^{n-4}+O(B^{n-4-\delta})\text{.}
\end{equation} Here we are essentially in the setup of \cite{BM}, so we shall not go into details which would essentially amount to paraphrasing the whole of \cite{BM}. However, we shall still say a few words about this contribution, for two reasons. First, in \cite{BM} the simplifying assumption $2\nmid Q_1(\mathbf{x})$ (which in our situation would be analogous to imposing that $Q_1(\mathbf{x})$ is coprime to $D$) is made, and we shall explain how to circumvent the need for that assumption. Secondly, we wish to compute the coefficient of $B^{n-4}$ explicitly and show that it indeed decomposes in the expected way as a product of local densities, for this reveals some interesting new features.

Recall from \S\ref{prelim} that $\mathcal{S}$ denotes the set of prime divisors of $D$. For each $p\in\mathcal{S}$, choose a positive integer $b_p$ such that $p^{b_p}\asymp B^{\eta}$, where $\eta>0$ is a sufficiently small constant. We begin by showing that the contribution to \eqref{mt} of those $\mathbf{x}$ such that $p^{b_p}\mid Q_1(\mathbf{x})$ for some $p\in\mathcal{S}$ is negligible. This is a simple application of Lemma 4 in \cite{BM}, which implies that
$$\#\{\mathbf{x}\in\mathbb{Z}^{n-2}:p^{b_p}\mid Q_1(\mathbf{x}),Q_2(\mathbf{x})=0\}\ll B^{n-4-\eta/(n-2)}\text{.}$$
Coupled with the fact that, for $|\mathbf{x}|\ll B$,
$$\sum_{d\mid Q_1(\mathbf{x})}\chi_D(d)\ll\tau(Q_1(\mathbf{x}))\ll_\varepsilon B^\varepsilon\text{,}$$
it follows that the contribution to \eqref{mt} of those $\mathbf{x}$ for which $p^{b_p}\mid Q_1(\mathbf{x})$ is $O_\varepsilon(B^{n-4-\eta/(n-2)+\varepsilon})$ and hence negligible.

We now proceed as in \cite{BM}, but instead of imposing the condition $2\nmid Q_1(\mathbf{x})$ we impose the condition that $p^{b_p}\nmid Q_1(\mathbf{x})$ for any $p\in\mathcal{S}$, as is permissible by the previous observation. We still have, relative to \cite{BM}, the mild difference that the $p^{b_p}$ are not $O(1)$, but this impacts the error term by at most a bounded power of $B^{\eta}$, which is negligible provided that $\eta$ is chosen sufficiently small.

For a positive integer $m$, let $r(m)$ be the expression featuring in Proposition \ref{genus}. Note that $r(m)=r(m/\mathrm{gcd}(m,D^\infty))$. Then, if $r(m)\neq0$, then by \eqref{princ} we have
$$\#\left\{\mathfrak{a}\unlhd\mathcal{O}_K:N\mathfrak{a}=\frac{m}{\mathrm{gcd}(m,D^\infty)}\right\}=2^{1-\mu}r(m)>0$$ and hence the number $m/\mathrm{gcd}(m,D^\infty)$, which is coprime to $D$, is represented by a binary quadratic form of discriminant $D$ (since there exist ideals in $\mathcal{O}_K$ with norm $m$; see \eqref{corr}). Therefore we have $\chi_D(m/\mathrm{gcd}(m,D^\infty))=\left(\frac{D}{m/\mathrm{gcd}(m,D^\infty)}\right)=1$. Call $m$ \emph{good} if $m$ is admissible and $p^{b_p}\nmid m$ for any $p\in\mathcal{S}$. As in \cite{BM}, let $(V_T(t))_T$ be a collection of smooth functions with $V_T$ supported in $[T,2T]$ such that $\sum_T V_T(t)=1$ for $t\in[1,CB^2]$ for some sufficiently large constant $C$. This collection can be chosen with $T$ restricted to lie in the interval $[1/2, 2CB^2]$ and with $O(\log B)$ values of $T$ under consideration. Moreover we may ensure that $t^jV^{(j)}(t)\ll_j1$ for every $j\geq0$. Then for a positive integer $m\leq CB^2$ we may write
$$r(m)=\sum_T\sum_{d\mid m}\chi(d)V_T(d)\text{.}$$
If we let
$$S(B)=\sum_{\substack{\mathbf{x}\in\mathbb{Z}^r\\Q_1(\mathbf{x})\text{ good}\\D\\Q_2(\mathbf{x})=0}}r(Q_1(\mathbf{x}))w\left(\frac{\mathbf{x}}{B}\right)\text{,}$$
so that the left-hand side in \eqref{mt} is $\frac{w_K}{h_K}S(B)$, it then follows that
$$S(B)=\sum_T\sum_{d}\chi_D(d)V_T(d)\sum_{\substack{\mathbf{x}\in\mathbb{Z}^r\\Q_1(\mathbf{x})\text{ good}\\Q_1(\mathbf{x})\equiv0\,(\mathrm{mod}\,d)\\Q_2(\mathbf{x})=0}}w\left(\frac{\mathbf{x}}{B}\right)=\sum_TS_T(B)\text{.}$$
At this stage we let $D^\ast=\prod_{p\in\mathcal{S}}p^{b_p}$ and we split into residue classes modulo $D^\ast$. For a residue class $\mathbf{a}\,(\mathrm{mod}\,D^\ast)$ such that $Q_1(\mathbf{a})$ is good (note that the notion of ``good residue class modulo $D^\ast$ is well-defined; see Observation \ref{obsad}), denote by $S_{T,\mathbf{a}}$ the part of $S(T)$ that comes from $\mathbf{x}\equiv\mathbf{a}\,(\mathrm{mod}\,D^\ast)$. Then, as in \cite{BM}, we use a form of Dirichlet's hyperbola trick to guarantee that only values of $d$ with $d\ll B$ show up in our analysis. When $T\leq B$ this is guaranteed by the presence of the factor $V_T(d)$. When $T>B$, we recall that $\chi_D$ only takes nonzero values at integers coprime to $D$, and for a good integer $m$ the expression
$$d\mapsto\frac{m}{\mathrm{gcd}(m,D^\ast)d}$$
defines an involution on the set of divisors of $m$ that are coprime to $D$. Moreover, since admissibility of $m$ implies that $\chi_D$ evaluates to $1$ at $m/\mathrm{gcd}(m,D^\ast)$, this involution preserves the output of $\chi_D$, i.e.,
$$\chi_D(d)=\chi_D\left(\frac{m}{\mathrm{gcd}(m,D^\ast)d}\right)\text{.}$$
It follows that
$$S_{T,\mathbf{a}}=\sum_{d}\chi_D(d)\sum_{\substack{\mathbf{x}\equiv\mathbf{a}\,(\mathrm{mod}\,D^\ast)\\Q_1(\mathbf{x})\equiv0\,(\mathrm{mod}\,d)\\Q_2(\mathbf{x})=0}}w\left(\frac{\mathbf{x}}{B}\right)V_T\left(\frac{Q_1(\mathbf{x})}{\mathrm{gcd}(Q_1(\mathbf{a}),D^\ast)}\right)\text{.}$$
Now the contributing values of $d$ satisfy $d\ll B$ due to the presence of the rightmost factor. If we write
$$w_{d,T,\mathbf{a}}(\mathbf{y})=\begin{cases}w(\mathbf{y})V_T(d)&\text{if }T\leq B\text{,}\\
w(\mathbf{y})V_T\left(\frac{B^2Q_1(\mathbf{y})}{\mathrm{gcd}(Q_1(\mathbf{a}),D^\ast)d}\right)&\text{otherwise}
\end{cases}$$
then we have the unified formula
$$S_{T,\mathbf{a}}(B)=\sum_{d=1}^\infty\chi_D(d)\sum_{\substack{\mathbf{x}\equiv\mathbf{a}\,(\mathrm{mod}\,D^\ast)\\Q_1(\mathbf{x})\equiv0\,(\mathrm{mod}\,d)\\Q_2(\mathbf{x})=0}}w_{d,T,\mathbf{a}}\left(\frac{\mathbf{x}}{B}\right)\text{.}$$
Note the analogy with Lemma 5 in \cite{BM}; now the procedure in \cite{BM} transfers to our setup without virtually any changes.

The leading term that arises from the asymptotic formula for $S(B)$ is (see (8.10) in \cite{BM})
$$\frac{C}{(D^\ast)^{n-2}}{B^{n-4}}$$
where
$$C=2^{\mu}\tau_\infty(Q_2,w)\sum_{\substack{\mathbf{a}\in(\mathbb{Z}/D^\ast\mathbb{Z})^r\\Q_1(\mathbf{a})\text{ good}}}\sum_{d=1}^\infty\frac{\chi_D(d)}{d^{r-1}}\sum_{q=1}^\infty\frac{T_{d,q,\mathbf{a}}(\mathbf{0})}{q^r}\text{.}$$
Here $\tau_\infty(Q_2,w)$ is defined as
$$\lim_{\varepsilon\to0}(2\varepsilon)^{-1}\int_{|Q_2(\mathbf{y})|\leq\varepsilon}w(\mathbf{y})d\mathbf{y}\text{,}$$
and
$$T_{d,q,\mathbf{a}}(\mathbf{0})=\underset{a\,(\mathrm{mod}\,q)}{\left.\sum\right.^{\ast}}\sum_{\substack{\mathbf{k}\,(\mathrm{mod}\,D^\ast dq)\\\mathbf{k}\equiv\mathbf{a}\,(\mathrm{mod}\,D^\ast)\\Q_1(\mathbf{k})\equiv0\,(\mathrm{mod}\,d)\\Q_2(\mathbf{k})\equiv0\,(\mathrm{mod}\,d)}}e\left(\frac{aQ_2(\mathbf{k})}{dq}\right)\text{.}$$
We wish to identify the coefficient of $B^{n-4}$ with a product of local densities as explained following \eqref{ahp}, so at this point we make some considerations about the singular integral $\mathfrak{J}$ and the $p$-adic densities $\mathfrak{S}_p$ associated to the system of equations \eqref{gen} for $X$.

The singular integral is given by
$$\mathfrak{J}=\int_{-\infty}^\infty\int_{-\infty}^\infty\int_{(\mathbf{x},u,v)\in\mathbb{R}^n}w(\mathbf{x})e(\alpha(Q_1(\mathbf{x})-F(u,v))+\beta Q_2(\mathbf{x}))d\mathbf{x}\,du\,dv\,d\alpha\,d\beta\text{.}$$
Recall that $F(u,v)=u^2+uv+kv^2$ where $k=(1-D)/4$. Note that $F(u,v)=(u+v/2)^2-Dv^2/4$. So a natural change of variables gives
\begin{align*}\mathfrak{J}&=\int_{-\infty}^\infty\int_{-\infty}^\infty\int_{(\mathbf{x},u,v)\in\mathbb{R}^n}w(\mathbf{x})e(\alpha(Q_1(\mathbf{x})-u^2+Dv^2/4)+\beta Q_2(\mathbf{x}))d\mathbf{x}\,du\,dv\,d\alpha\,d\beta\\
&=4\int_{-\infty}^\infty\int_{-\infty}^\infty\int_{(\mathbf{x},u,v)\in\mathbb{R}^{n-2}\times\mathbb{R}_{\geq0}\times\mathbb{R}_{\geq0}}w(\mathbf{x})e(\alpha(Q_1(\mathbf{x})-u^2+Dv^2/4)+\beta Q_2(\mathbf{x}))d\mathbf{x}\,du\,dv\,d\alpha\,d\beta\text{.}
\end{align*}
We now substitute $t=Q_1(\mathbf{x})-u^2+Dv^2/4$, and solving for $v$ we obtain
\begin{align}\mathfrak{J}&\nonumber=\frac{4}{\sqrt{|D|}}\int_{-\infty}^\infty\int_{-\infty}^{\infty}\int_{\substack{(\mathbf{x},u,t)\\u\geq0\\Q_1(\mathbf{x})-u^2-t\geq0}}\frac{w(\mathbf{x})e(\beta Q_2(\mathbf{x})e(\alpha t)}{\sqrt{Q_1(\mathbf{x})-u^2-t}}\,d\mathbf{x}\,d\mathbf{u}\,d\mathbf{t}\,d\mathbf{\alpha}\,d\mathbf{\beta}\\
&=\frac{4}{\sqrt{|D|}}\int_{-\infty}^\infty\int_t G(t)e(\alpha t)\,dt\,d\alpha\text{,}\label{dint}
\end{align}
where
$$G(t)=\int_{-\infty}^\infty\int_{\substack{(\mathbf{x},u,t)\\u\geq0\\Q_1(\mathbf{x})-u^2-t\geq0}}\frac{w(\mathbf{x})e(\beta Q_2(\mathbf{x}))}{\sqrt{Q_1(\mathbf{x})-u^2-t}}\,d\mathbf{x}\,du\,d\beta\text{.}$$
The inner integral in \eqref{dint} is then the Fourier transform of $G$ at $\alpha$, whence, by the Fourier inversion formula, the double integral in \eqref{dint} is simply $G(0)$. Since, for any $A>0$,
$$\int_0^{\sqrt{A}}\frac{du}{\sqrt{A-u^2}}=\frac{\pi}{2}\text{,}$$
it follows that
\begin{align*}G(0)&=\frac{\pi}{2}\int_{-\infty}^\infty\int_{\mathbf{x}}w(\mathbf{x})e(\beta Q_2(\mathbf{x}))\,d\mathbf{x}\,d\beta\\
&=\frac{\pi}{2}\tau_\infty(Q_2,w)\text{,}
\end{align*}
by Theorem 3 in \cite{Heath-Brown}. Substituting into \eqref{dint} it follows that
\begin{equation}
\label{singint}
\mathfrak{J}=\frac{2\pi}{\sqrt{|D|}}\tau_\infty(Q_2,w)\text{.}
\end{equation}

We now study the $p$-adic densities $\mathfrak{S}_p$ associated to the system \eqref{gen}, given by
$$\mathfrak{S}_p=\lim_{\ell\to\infty}p^{-\ell(n-2)}N(p^\ell)$$
where
$$N(p^\ell)=\#\{(\mathbf{x},u,v)\in(\mathbb{Z}/p^\ell\mathbb{Z})^n:Q_1(\mathbf{x})=F(u,v),Q_2(\mathbf{x})=0\}\text{.}$$
Observe that we can rewrite
$$N(p^\ell)=\sum_{\substack{\mathbf{x}\in(\mathbb{Z}/p^\ell\mathbb{Z})^r\\Q_2(\mathbf{x})=0}}S(Q_1(\mathbf{x});p^\ell)$$
where
$$S(A;p^\ell)=\#\{(u,v)\in(\mathbb{Z}/p^\ell\mathbb{Z})^2:F(u,v)=A\}\text{.}$$
It is an elementary exercise to compute the quantities $S(A;p^\ell)$. If $\left(\frac{D}{p}\right)=1$, then we have
$$S(A;p^\ell)=\begin{cases}
p^\ell+\ell p^\ell\left(1-\frac{1}{p}\right)&\text{if $A=0$}\\
(1+v_p(A))p^\ell\left(1-\frac{1}{p}\right)&\text{if $v_p(A)<\ell$.}
\end{cases}$$
If, on the other hand, $\left(\frac{D}{p}\right)=-1$, then we have
$$S(A;p^\ell)=\begin{cases}
p^{2\left\lfloor \ell/2\right\rfloor}&\text{if $A=0$}\\
p^\ell\left(1+\frac{1}{p}\right)&\text{if $v_p(A)<\ell$ and $2\mid v_p(A)$}\\
0&\text{if $v_p(A)<\ell$ and $2\nmid v_p(A)$.}
\end{cases}$$
Finally, if $p\mid D$, then we have
$$S(A;p^\ell)=\begin{cases}
2p^\ell&\text{if $A$ is admissible}\\
0&\text{otherwise.}\\
\end{cases}$$
For $p\mid D$ this immediately implies that
\begin{equation}
\label{sserex}\mathfrak{S}_p=2\lim_{\ell\to\infty}p^{-\ell(r-1)}\#\{\mathbf{x}\in(\mathbb{Z}/2^\ell\mathbb{Z})^r:Q_1(\mathbf{x})\text{ admissible},Q_2(\mathbf{x})=0\}\text{.}
\end{equation}
For the remaining primes $p$, we deduce easily that
\begin{equation}
\label{sser}\mathfrak{S}_p=\left(1-\frac{\chi_D(p)}{p}\right)\lim_{\ell\to\infty}p^{-\ell(r-1)}\sum_{0\leq e\leq\ell}\chi_D(p^e)\widetilde{N}_\ell(e)
\end{equation}
where
$$\widetilde{N}_\ell(e)=\#\{\mathbf{x}\in(\mathbb{Z}/p^\ell\mathbb{Z})^r:Q_1(\mathbf{x})\equiv0\,(\mathrm{mod}\,p^e),Q_2(\mathbf{x})=0\}$$
(compare to (8.12) in \cite{BM}).

We are now ready to go back to the study of $C$. We import some notation from \cite{BM}, according to which
$$S_{d,q}(\mathbf{0})=\underset{a\,(\mathrm{mod}\,q)}{\left.\sum\right.^{\ast}}\sum_{\substack{\mathbf{k}\,(\mathrm{mod}\,dq)\\Q_1(\mathbf{k})\equiv0\,(\mathrm{mod}\,d)\\Q_2(\mathbf{k})\equiv0\,(\mathrm{mod}\,d)}}e\left(\frac{aQ_2(\mathbf{k})}{dq}\right)\text{.}$$
It then follows from a classical use of the Chinese Remainder Theorem, in the style of Lemma \ref{multrel}, that, if $q=q'm$ with $\mathrm{gcd}(q',D)=1$,
$$T_{d,q,\mathbf{a}}(\mathbf{0})=S_{d,q'}(\mathbf{0})T_{1,m,\mathbf{a}}(\mathbf{0})\text{.}$$
(Here we are assuming that $\mathrm{gcd}(d,D)=1$, as we may suppose for the purpose of evaluating $C$.) Writing each $q$ as $q'm$ where $\mathrm{gcd}(q',D)=1$ and $m\mid D^\infty$, it follows that
$$C=2^{\mu}\tau_\infty(Q_2,w)\sum_{d=1}^\infty\frac{\chi_D(d)}{d^{r-1}}\sum_{\substack{q'=1\\\mathrm{gcd}(q',D)=1}}^\infty\frac{S_{d,q'}(\mathbf{0})}{q'^r}C'$$
where
$$C'=\sum_{m\mid D^\infty}\frac{1}{m^r}\underset{a\,(\mathrm{mod}\,m)}{\left.\sum\right.^{\ast}}\sum_{\substack{\mathbf{k}\,(\mathrm{mod}\,D^\ast m)\\Q_1(\mathbf{k})\text{ good}}}e\left(\frac{aQ_2(\mathbf{k})}{m}\right)\text{.}$$
Since admissibility is a local condition, and taking into account Observation \ref{obsad}, if we decompose $D^\ast$ and $m$ into primes a further use of the Chinese Remainder Theorem implies that
$$C'=\prod_{p\in\mathcal{S}}C'_p\text{,}$$
where
$$C'_p=\left(\sum_{\ell=0}^{\infty}\frac{1}{p^{\ell r}}\underset{a\,(\mathrm{mod}\,p^\ell)}{\left.\sum\right.^{\ast}}\sum_{\substack{\mathbf{k}\,(\mathrm{mod}\,p^{b_p+r})\\Q_1(\mathbf{k})\text{ good}}}e\left(\frac{aQ_2(\mathbf{k})}{p^\ell}\right)\right)\text{.}$$
Note that
\begin{align}
C'_p&=\lim_{\ell\to\infty}p^{-\ell(r-1)}\#\{\mathbf{k}\,(\mathrm{mod}\,p^{b_p+\ell}):Q_1(\mathbf{k})\text{ good},Q_2(\mathbf{k})\equiv0\,(\mathrm{mod}\,p^\ell)\}\nonumber\\
&=p^{b_pr}\lim_{\ell\to\infty}p^{-\ell(r-1)}\#\{\mathbf{k}\,(\mathrm{mod}\,p^{\ell}):Q_1(\mathbf{k})\text{ good},Q_2(\mathbf{k})\equiv0\,(\mathrm{mod}\,p^\ell)\}\label{c'p}\text{.}
\end{align}
Observe that, comparing to \eqref{sserex} the limit above differs from $\mathfrak{S}_p/2$ by at most
$$\lim_{\ell\to\infty}p^{-\ell(r-1)}\#\{\mathbf{k}\,(\mathrm{mod}\,p^{\ell}):Q_1(\mathbf{k})\equiv0\,(\mathrm{mod}\,p^b),Q_2(\mathbf{k})\equiv0\,(\mathrm{mod}\,p^\ell)\}\text{.}$$
The number of $\mathbf{k}\,(\mathrm{mod}\,p^b)$ such that $Q_1(\mathbf{k})\equiv Q_2(\mathbf{k})\equiv0\,(\mathrm{mod}\,p^b)$ is $O_{\varepsilon}(p^{b(r-2)+b\varepsilon})$, by Lemma 2 in \cite{BM}. It follows from an easy use of Hensel's Lemma that
$$\#\{\mathbf{k}\,(\mathrm{mod}\,p^{\ell}):Q_1(\mathbf{k})\equiv0\,(\mathrm{mod}\,p^b),Q_2(\mathbf{k})\equiv0\,(\mathrm{mod}\,p^\ell)\}\ll_\varepsilon p^{b(r-2)+(\ell-b)(r-1)+b\varepsilon}=p^{\ell r-b-\ell+b\varepsilon}\text{.}$$
We then see that the limit in \eqref{c'p} differs from $\mathfrak{S}_p/2$ by $O(p^{-b+b\varepsilon})=O(B^{-\eta+\varepsilon})$.
It therefore follows that
$$C'_p=\frac{1}{2}p^{b_pr}\mathfrak{S}'_p\text{,}$$
where $\mathfrak{S}'_p=\mathfrak{S}_p+O(B^{-\delta})$ for some $\delta>0$. We then see that
$$C=(D^\ast)^r\tau_\infty(Q_2,w)\prod_{p\in\mathcal{S}}\mathfrak{S}'_p\sum_{d=1}^\infty\frac{\chi_D(d)}{d^{r-1}}\sum_{\substack{q'=1\\\mathrm{gcd}(q',D)=1}}^\infty\frac{1}{q'^r}S_{d,q'}(\mathbf{0})\text{.}$$
By multiplicativity we may write the double sum above as
$$\prod_{p\nmid D}\sum_{a,b\geq0}\frac{p^a\chi_D(p^a)}{p^{(a+b)r}}S_{p^a,p^b}(\mathbf{0})\text{.}$$
Observe that by orthogonality of additive characters we have $S_{p^a,1}(\mathbf{0})=\widetilde{N}_a(a)$ and $S_{p^a,p^b}(\mathbf{0})=p^b\widetilde{N}_{a+b}(a)-p^{b-1+r}\widetilde{N}_{a+b-1}(a)$ when $b\geq1$. It follows easily that
$$\sum_{\substack{a,b\geq0\\a+b\leq\ell}}\frac{p^a\chi_D(p^a)}{p^{(a+b)r}}S_{p^a,p^b}(\mathbf{0})=p^{-\ell(r-1)}\sum_{0\leq t\leq\ell}\chi_D(p^t)\widetilde{N}_\ell(t)\text{.}$$
By passing to the limit and comparing to \eqref{sser} we conclude that
$$\sum_{a,b\geq0}\frac{p^a\chi_D(p^a)}{p^{(a+b)r}}S_{p^a,p^b}(\mathbf{0})=\left(1-\frac{\chi_D(p)}{p}\right)^{-1}\mathfrak{S}_p\text{.}$$
We therefore obtain
$$C=(D^\ast)^r\tau_\infty(Q_2,w)L(1,\chi_D)\prod_p\mathfrak{S}'_p\text{,}$$
where for $p\notin\mathcal{S}$ we simply set $\mathfrak{S}'_p=\mathfrak{S}_p$.
At this point we invoke Dirichlet's class number formula (see for example Theorem 40 and Theorem 44 in \cite{Marcus}), which yields
$$L(1,\chi_D)=\frac{2\pi h_K}{w_K\sqrt{|D|}}\text{.}$$
Since by \eqref{singint} we have $\tau_\infty(Q_2,w)=\frac{\sqrt{|D|}}{2\pi}\mathfrak{J}$, it follows that
$$C=(D^\ast)^r\times\frac{\sqrt{|D}}{2\pi}\mathfrak{J}\times\frac{2\pi h_K}{w_K\sqrt{|D|}}\times\prod_p\mathfrak{S}'_p=\frac{h_K}{w_K}(D^\ast)^r\mathfrak{J}\prod_{p}\mathfrak{S}_p+O(B^{-\delta})\text{.}$$
Since we have shown that the left-hand side in Theorem \ref{main} equals
$$\frac{h_K}{w_k}\cdot\frac{C}{(D^\ast)^r}B^{n-4}+O(B^{n-4-\delta})\text{,}$$
the result follows.
\section{Acknowledgements}
The author was funded through the Engineering
and Physical Sciences Research Council Doctoral Training Partnership at the
University of Warwick. The author would like to thank Simon Rydin Myerson for unwavering support and guidance while the present work was being carried out, and also Junxian Li for very helpful conversations on Voronoï-type summation formulae.

\providecommand{\bysame}{\leavevmode\hbox to3em{\hrulefill}\thinspace}

\end{document}